\documentclass[11pt]{article}
\usepackage{graphicx}
\usepackage{amsmath,amssymb,amsthm,amsfonts}
\usepackage{color}
\usepackage{amssymb}
\usepackage{cite}
\newtheorem{thm}{Theorem}[section]

\newtheorem{lem}[thm]{Lemma}
\newtheorem{prop}[thm]{Proposition}

\theoremstyle{definition}

\theoremstyle{remark}
\usepackage{appendix}

\numberwithin{equation}{section}

\DeclareMathSymbol{\C}{\mathalpha}{AMSb}{"43}

\newcommand{\R}{{\mathbb{R}}}

\newcommand{\inte}{\int_{\mathbb{R}}}

\def\R{{\mathbb R}}
\def\C{{\mathbb C}}
\allowdisplaybreaks[4]

\newcommand{\bsub}{\begin{subequations}}
\newcommand{\esub}{\end{subequations}$\!$}

\begin{document}

\title{Ground States of One-Dimensional Fermionic  Schr\"{o}dinger Systems Near a Critical Exponent}

\author{
Bin Chen\thanks{Academy of Mathematics and Systems Science, Chinese Academy of Sciences, Beijing 100190, P. R. China. B. Chen  is partially supported by NSF of China (Grant 12501151).
Email:  binchen@amss.ac.cn.},
Yujin Guo\thanks{School of Mathematics and Statistics,  Key Laboratory of Nonlinear Analysis $\&$ Applications (Ministry of Education), Central China Normal University, Wuhan 430079, P. R. China. Y. J. Guo is partially
supported by NSF of China (Grants 12225106 and
12371113) and  National Key R \& D Program of China (Grant 2023YFA1010001). Email: yguo@ccnu.edu.cn.},
\ Yong Luo\thanks{School of Mathematics and Statistics, Hubei Key Laboratory of Mathematical Sciences, Central China Normal University, Wuhan 430079, P. R. China. Y. Luo is partially supported by NSF of China (Grant 12571119). Email:  yluo@ccnu.edu.cn.},
and Juncheng Wei\thanks{Department of Mathematics, Chinese University of Hong Kong, Shatin, NT, Hong Kong.  J. C. Wei is partially supported by GRF from RGC of Hong Kong entitled ``New frontiers in singularity formations in nonlinear partial differential equations". Email: wei@math.cuhk.edu.hk.}
}


\date{\today}

\smallbreak \maketitle
\begin{abstract}
We study ground states of the   fermionic nonlinear Schr\"{o}dinger system $J_2(p)$ in $\R$, where $p>1$ denotes a polynomial exponent of the nonlinear term. It is known that the system $J_2(p)$ admits ground states for any $1<p<2$, while there is no ground state for $J_2(2)$. We prove that  there is no ground state of $J_2(p)$ as $p\searrow 2$, which addresses the special case of Conjecture 5 in [D. Gontier, M. Lewin and F. Q. Nazar, ARMA, 2021]. The refined limiting profile of ground states for $J_2(p)$ is also analyzed as $p\nearrow 2$, which shows that the corresponding density admits exactly two bumps whose distance goes up to infinity as  $p\nearrow 2$.
\end{abstract}

\vskip 0.05truein


\noindent {\it Keywords:} Fermionic NLS systems;  Critical exponents; Limiting profiles

\vskip 0.2truein


\section{Introduction}

In this paper, we consider the following variational problem of fermionic nonlinear Schr\"{o}dinger (NLS) systems in $\R$:
\begin{equation}\label{1.1}
\begin{split}
J_2(p):=&\inf\Big\{\mathcal{E}_p(u_{1}, u_{2}):\,   u_i\in H^1(\R), \ \langle u_i,u_j\rangle_{L^2}=\delta_{ij},\  i,j=1,2\Big\},\,  \ 1<p<3,
\end{split}
\end{equation}
where the energy functional $\mathcal{E}_p(u_{1} ,u_{2})$ satisfies
\begin{equation}\label{1.2}
\mathcal{E}_p(u_{1} ,u_{2}):=\inte \big(|\nabla u_{1}|^2 +|\nabla u_{2}|^2\big)dx -\frac{1}{p}\int_{\R}(u_{1}^2+u_{2}^2  )^pdx.
\end{equation}
The problem $J_2(p)$ is often used to describe ground states of many-body  quantum fermions, such as electrons, protons, neutrons, and so on. We also refer \cite{G} and the references therein to the physical experiments of many-body  quantum fermionic systems in one dimension. Since the analysis of $J_2(p)$ presents many novel difficulties, together with its strong physical motivations, the problem $J_2(p)$ has attracted a lot of attentions over the past few years, see \cite{i,ii,Lewin,1981} and the references therein. We also comment that the problem $J_2(p)$ for the case $p\ge 3$ was discussed recently in \cite{i,ii,Lewin}. Especially, one can obtain the nonexistence of minimizers for $J_2(p)$ for any $p> 3$.

Suppose $(u_{1}, u_{2})$ is a minimizer of $J_2(p)$,  then up to an orthogonal transformation, the variational theory gives that $(u_{1}, u_{2})$ satisfies the
following fermionic NLS system
\begin{eqnarray}\label{1:x4}
	\left\{
	\begin{array}{lll}
		\!\!\!-u''_{1}-\big(u_{1}^2+u_{2}^2\big)^{p-1}\, u_{1}=\mu_{1}u_{1}  & \mathrm{in} \,\ \R,\\[1mm]
		\!\!\!-u''_{2}-\big(u_{1}^2+u_{2}^2\big)^{p-1}\, u_{2}=\mu_{2}u_{2}  & \mathrm{in} \,\ \R,
	\end{array}
	\right.
\end{eqnarray}
where $1<p<3$ and $(\mu_1, \mu_2)\in \R\times\R$ satisfying $\mu_1\leq\mu_2$ is a suitable Lagrange multiplier. It deserves to mention that even though the system (\ref{1:x4}) was studied widely over the past few decades, see \cite{Wei,K} and the references therein, the constraint condition $\langle u_i,u_j\rangle_{L^2}=\delta_{ij}$ was usually not considered in these mentioned works. We also observe that the complete classification on the solutions of the system (\ref{1:x4}) with $p=2$ was proved successfully in \cite{ii} by investigating the corresponding integral system of (\ref{1:x4}).
It follows from \cite{i,ii} that if $(u_{1}, u_{2})$ is a minimizer of $J_2(p)$, then its associated Lagrange multiplier $(\mu_1, \mu_2) $ given in (\ref{1:x4})  satisfies $\mu_{1}\le\mu_{2}<0$, which are the $2$-first eigenvalues of the operator $-D_{xx}-\big(u_{1}^2+u_{2}^2\big)^{p-1}$ in $\R$.
Then we further obtain from \cite[Theorems 11.6 $\&$ 11.8]{analysis} that $\mu _1$ is simple, and its associated eigenfunction $u_1$ can be chosen to be strictly positive. Thus, if $(u_{1} ,u_{2})$ is a minimizer of $J_2(p)$,   throughout the present paper, then we always assume that $(u_{1}, u_{2})$ satisfies (\ref{1:x4}) for some $\mu_{1}<\mu_{2}<0$, and $u_{1}(x)>0$ holds in $\R$. Hence, any minimizer $(u_{1} ,u_{2})$ of $J_2(p)$ can be also called a {\em ground state} of $J_2(p)$, which is consist with \cite[Definition 1]{i}.


There are some existing analytical results of the problem $J_2(p)$. More precisely, it was proved in \cite[Theorem 4]{i} that $J_2(p)$ admits minimizers for any $1<p<2$ by an adaptation of the classical concentration compactness principle \cite{concen,concen2}. Employing Hirota's bilinearisation method, it was further proved in \cite[Theorem 8]{ii} that there is no minimizer of $J_2(p)$ at $p=2$. It turns out from \cite{i,ii} that the orthonormal constraint of $J_2(p)$ leads to the essential  difficulties in the analysis of $J_2(p)$. Moreover, since $J_2(p)$  has both the translation invariance and the rotation invariance, the extra difficulties appear in investigating the existence and other analytical properties of its minimizers. On the other hand, the authors of \cite{i} posed a conjecture that $J_2(p)$ does not have any minimizer for all $2\le p<3$, see \cite[Conjecture 5]{i} for more details. These illustrate that $p=2$ behaves like a critical exponent of $J_2(p)$ in one dimension. Furthermore, some numerical computations of minimizers for $J_2(p)$ presented in \cite[Theorem 8]{ii} interesting new phenomena as $p \nearrow 2$. Stimulated by these facts, the main purpose of the present paper is to address the nonexistence of minimizers for $J_2(p)$ as $p\searrow 2$, and analyze as well the refined limiting profile of minimizers for $J_2(p)$ as $p \nearrow 2$.

In order to introduce the main results of the present paper, we first recall from \cite{uniq} that up to translation, the following scalar field equation
\begin{equation}\label{1.3}
	u''-u+u^3= 0\ \   \mathrm{in} \, \  \R
\end{equation}
admits a unique positive solution $w>0$, which satisfies
\begin{equation}\label{a8}
	w(x)=\frac{2\sqrt 2}{e^{x}+e^{-x}},\ \    a^*:=\|w\|^2_2=4,
\end{equation}
and admits the nondegeneracy. By analyzing the refined limiting profiles of minimizers for $J_2(p)$ as $p\searrow 2$, the first main result of the present paper is concerned with the following nonexistence of minimizers for $J_2(p)$.

\begin{thm}\label{thm1.1}
There exists a sufficiently small $\delta >0$ such that there is no minimizer of $J_2(p)$ for any $2\le p\le 2+\delta$.
\end{thm}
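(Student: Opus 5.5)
We argue by contradiction and let $p$ approach $2$ from above. The case $p=2$ is \cite[Theorem 8]{ii}. Suppose there are $p_k\searrow 2$ such that $J_2(p_k)$ admits a minimizer $(u_{1k},u_{2k})$. We normalize it as above: it solves (\ref{1:x4}) with $\mu_{1k}<\mu_{2k}<0$ and $u_{1k}>0$.

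\textbf{Step 1: energy and multipliers.} From the known value of $J_2(2)$ (attained only in the limit by two far-apart copies of the scaled soliton built from $w$, with $\|w\|_2^2=4$), the a priori estimates $\mu_{1k}\le\mu_{2k}<0$, and Gagliardo--Nirenberg bounds, we get uniform $H^1$ bounds on $u_{ik}$ and bounds on $\mu_{ik}$ away from $0$ and $-\infty$. We would also show that $J_2(p)$ is continuous at $p=2$.

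\textbf{Step 2: blow-up of the profile.} Since $J_2(2)$ has no minimizer, the density $\rho_k=u_{1k}^2+u_{2k}^2$ cannot converge strongly to a minimizer of $J_2(2)$. Using concentration compactness (as in \cite[Theorem 4]{i}) and the classification of solutions of (\ref{1:x4}) at $p=2$ \cite{ii}, we would show the following. Up to translation and orthogonal rotation, $\rho_k$ splits into exactly two bumps centred at $\pm x_k$ with $x_k\to\infty$. Near each bump the orbitals are approximately $\frac{1}{\sqrt2}\big(w_\mu(x-x_k)\pm w_\mu(x+x_k)\big)$, where $w_\mu$ is the rescaling of $w$ carrying $L^2$ mass $1$ at the limiting multiplier. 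In this regime $u_{1k}$ is approximately the even combination and $u_{2k}$ the odd one.

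\textbf{Step 3: refined expansion and contradiction.} This is the main obstacle. Write $u_{ik}$ as the above ansatz plus a remainder. We control the remainder using the nondegeneracy of $w$ via a Lyapunov--Schmidt reduction, getting a bound of order $(p_k-2)+e^{-c x_k}$. Then we expand $\mathcal{E}_{p_k}$ (or equivalently a Pohozaev-type identity for the two bumps, obtained by multiplying by $u_{ik}'$ and integrating over a half-line $(0,\infty)$) to leading order in two small quantities. The first is the exponent perturbation $p_k-2$, whose effect on each soliton gives a definite-sign force. The second is the exponentially small interaction $e^{-2\sqrt{|\mu|}x_k}$ between the bumps, coming from the overlap of tails and the orthogonality constraint; its sign we compute from the explicit formula (\ref{a8}) for $w$.

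The balance condition from the half-line Pohozaev identity reads roughly
\begin{equation*}
C_1(p_k-2)\,\big(1+o(1)\big)=C_2\,e^{-2\sqrt{|\mu|}x_k}\big(1+o(1)\big).
\end{equation*}
For $p<2$ we expect this to fix $x_k\sim \tfrac12|\log(2-p)|$, which matches the profile result. For $p>2$ we expect the two sides to have incompatible signs ($C_1C_2<0$ after the sign computation), which forbids the two bumps from equilibrating and gives the contradiction. The delicate part is computing the interaction coefficient exactly, including the contribution of the Fermi orthogonality. We also need the remainder estimates to be sharp enough that the error terms are $o\big((p_k-2)+e^{-2\sqrt{|\mu|}x_k}\big)$. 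A nontrivial preliminary here is ruling out alternative limiting configurations, such as a single bump or orbitals that do not split symmetrically.
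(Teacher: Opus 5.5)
Your outline has the same architecture as the paper: contradiction along $p_k\searrow 2$, a two-bump limit, a reduction via nondegeneracy, then a translation-type Pohozaev identity. However, the balance law behind your contradiction is wrong, so Step 3 fails as stated.

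An isolated soliton is an exact solution for every $p$. The paper's ansatz uses the exact $p_n$-solitons $\widetilde w_{in}$ of \eqref{AA3.1} with the exact multipliers $\mu_{in}$, and the single-soliton energy does not depend on the separation. So $p_k-2$ produces no force of order $p_k-2$ by itself. Your own identity shows this. Multiply the $i$-th equation by $u_{ik}'$, sum over $i$ and integrate over $(0,\infty)$: this gives exactly $H(0)=0$ for the conserved quantity $H=\frac12\sum_i(u_{ik}')^2+\frac{1}{2p_k}\rho_k^{p_k}+\frac12\sum_i\mu_{ik}u_{ik}^2$. Every term of $H(0)$ is exponentially small in $x_k$. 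Hence $C_1=0$ in your relation $C_1(p_k-2)(1+o(1))=C_2e^{-2\sqrt{|\mu|}x_k}(1+o(1))$, and there is no sign comparison to make.

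In the paper, $p_n-2$ enters only through the interaction integrals. The tails of $\rho^{p-1}$ decay at rate $(2p-2)\sqrt{|\mu|}$ instead of $2\sqrt{|\mu|}$, so one gets, for instance, $A_n=-\frac{1+o(1)}{16(p_n-2)\sqrt{|\mu_{2n}|}}\big(e^{-2\sqrt{|\mu_{2n}|}x_n}-e^{-p_n\sqrt{|\mu_{2n}|}x_n}\big)$ in \eqref{4.74a}. This is of order $x_ne^{-2\sqrt{|\mu_{2n}|}x_n}$, plus a correction of size $(2-p_n)x_n^2e^{-2\sqrt{|\mu_{2n}|}x_n}$. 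The leading $x_ne^{-2\sqrt{|\mu_{2n}|}x_n}$ contributions of $A_n$, $B_n$, $C_n$ cancel (Lemma \ref{lemA.1}). In units of $e^{-2\sqrt{|\mu_{2n}|}x_n}$ the balance reads $\frac14-\frac{(2-p_n)x_n^2}{128}=-\frac18+o(1)$, i.e.\ $(2-p_n)x_n^2\to48$ (Lemma \ref{lem4.7}). That lemma itself needs the preliminary $(2-p_n)x_n\to0$ of Lemma \ref{N24}. It is this limit that is incompatible with $p_n>2$. Accordingly, for $p<2$ the separation is $x_n\sim4\sqrt3\,(2-p_n)^{-1/2}$ (Proposition \ref{prop1.2}), not logarithmic in $2-p$ as you predicted.

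This also changes what your reduction must deliver. The whole balance lives at order $e^{-2\sqrt{|\mu|}x_k}$ times powers of $x_k$, and nothing a priori relates $p_k-2$ to $e^{-x_k}$. For the genuine $p<2$ minimizers, $2-p_n\sim48x_n^{-2}$ is far larger than $e^{-x_n/2}$. So a remainder bound of the form $(p_k-2)+e^{-cx_k}$ cannot resolve the balance. You need $\|\hat\phi_n\|_\infty+\|\hat\psi_n\|_\infty=O(e^{-\sqrt{|\mu_{2n}|}x_n})$ with this sharp rate. The paper obtains it precisely because the ansatz uses the exact $p_n$-solitons and multipliers (Lemmas \ref{NL1} and \ref{a}).

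Even this bound is not enough. The correctors contribute to the balance at the same order (the terms $D_n,E_n,F_n$ in \eqref{4.74}), so their leading profiles must be computed explicitly. Near the origin these are $\hat\psi_n\approx-\frac{x}{2}w_*e^{-\sqrt{|\mu_{2n}|}x_n}$ and $\hat\phi_n\approx-2(xw_*'+w_*)e^{-\sqrt{|\mu_{2n}|}x_n}$, with analogues near $x_n$. You also need the splitting $\mu_{2n}-\mu_{1n}=\frac{1+o(1)}{2}e^{-\sqrt{|\mu_{2n}|}x_n}$.

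Finally, the equal-weight splitting you assume in Step 2 does not follow from concentration compactness and the $p=2$ classification. Since $u_{1k},u_{2k}$ are the eigenfunctions and the limiting problem is degenerate ($\mu_1=\mu_2=-1/16$), these tools only give some mixing $a_0\in[0,\sqrt2/2]$. The paper proves $a_0=\sqrt2/2$ in Lemma \ref{a}, using $\int(\hat u_{1n}^2-\hat u_{2n}^2)\,dx=0$ together with the expansions of $\int w_{1n}\hat\phi_n\,dx$ and $\int w_{2n}\hat\psi_n\,dx$.
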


The nonexistence of Theorem \ref{thm1.1} addresses a special case of \cite[Conjecture 5]{i}. Since it follows from \cite[Theorem 8]{ii} that $J_2(2)$ does not admit any minimizer, by contradiction we shall prove
Theorem \ref{thm1.1} for the case $2< p\le 2+\delta$. On the contrary, in the following we always assume that $J_2(p_n)$ admits a minimizer $(u_{1n}, u_{2n})$ as $n\to\infty$, where $p_n \searrow 2$ as $n\to\infty$ and $(u_{1n}, u_{2n})$ satisfies the elliptic system \eqref{1:x4} with a suitable Lagrange multiplier $(\mu_{1n}, \mu_{2n})$. Without loss of generality, we may further assume that
\begin{equation}\label{1.6K}
u_{1n}(0)=\max\limits_{x\in\R}  u_{1n}(x)>0,\  \  u_{2n}(x_n)=\max\limits_{x\in\R} u_{2n}(x),   \ \ x_n>0.
\end{equation}
We next briefly sketch the proof strategy of Theorem \ref{thm1.1} by the following three steps:

As the first step of proving Theorem \ref{thm1.1}, we shall prove that $ \lim\limits_{n\to\infty}x_n=\infty$ and 
\begin{equation}\label{1.6}
\begin{split}
(u_{1n},  u_{2n})
\to \big(\sqrt{1-a_0^2}w_{*}(x)+a_0w_{*}(x- x_n),\ -a_0w_{*}(x)+\sqrt{1-a_0^2}w_{*}(x-x_n)\big)
\end{split}
\end{equation}
strongly in $L^{2}(\R)\cap L^{\infty}(\R)$ as $n\to\infty$ for some  constant $a_0\in[0, \sqrt{2}/2]$, where $w_*(x)\equiv\frac{1}{4}w(\frac{1}{4}x)>0$ in $\R$.  Because  the energy functional $\mathcal{E}_{p_n}(u_{1n} ,u_{2n})$ is invariant under orthogonal transformations, unfortunately  we are here unable to determine the specific value of  $a_0$. This is different from the existing convergence results (cf. \cite{GLY,glw, GS} and the references therein), which were mainly concerned with the limiting behavior of bosonic systems.


Based on \eqref{1.6}, the second step of proving Theorem \ref{thm1.1} is to derive the expansion of  $(u_{1n}, u_{2n})$ as $n\to \infty$. Employing the orthonormal condition  \eqref{1.1} and the non-degeneracy of $w$, towards this aim, we first choose technically in Section 3 a sequence $\{(a_n, b_n, \delta_n, \eta_n)\}$ satisfying  $(a_n, b_n, \delta_n, \eta_n)=[1+o(1)](a_0, -a_0, 0, 0)$ as $n\to\infty$ such that
\begin{eqnarray}\label{1.7}
	\left\{
	\begin{array}{lll}
		\!	\!	\!	 u_{1n}(x)=\sqrt{1-b^2_n}\widetilde w_{1n}(x-\delta_n)+a_n\widetilde w_{2n}(x- x_n+\eta_n)+ \phi_n(x), \\[1mm]
		\!	\!	\!	 u_{2n}(x)=b_n\widetilde w_{1n}(x-\delta_n)+\sqrt{1-a_n^2}\widetilde w_{2n}(x- x_n+\eta_n)+ \psi_n(x),
	\end{array}
	\right.
\end{eqnarray}
and  the error terms $\phi_n(x)$ and $\psi_n(x)$ satisfy $\|\phi_n\|_\infty+\|\psi_n\|_\infty=O\big(e^{-\sqrt{|\mu_{2n}|}x_n}\big)\ \mbox{as}\ n\to\infty$,
where  the function $\widetilde w_{in}$ satisfying (\ref{AA3.1}) is closely related to the $i$-th eigenvalue of the operator  $-D_{xx}-\big(u_{1n}^2+u_{2n}^2\big)^{p_n-1}$ in $\R$.  Furthermore, to determine the specific value of the constant $a_0$,   we  introduce the following new transformation:
\begin{eqnarray*}
	\left\{
	\begin{array}{lll}
		\!	\!	\!	 \hat u_{1n}(x):=\sqrt{1-b^2_n} u_{1n}(x)+b_n  u_{2n}(x), \\[1mm]
		\!	\!	\!	 \hat u_{2n}(x):=a_n u_{1n}(x)+\sqrt{1-a_n^2} u_{2n}(x).
	\end{array}
	\right.
\end{eqnarray*}
By analyzing the system of $(\hat u_{1n}, \hat u_{2n})$, instead of $(u_{1n}, u_{2n})$,  we shall finally prove that $\lim_{n\to\infty}a_n=a_0=\sqrt{2}/2$.

The third step of proving Theorem \ref{thm1.1} is to derive a contradiction by establishing the refined estimate of the global maximal point $x_n$ as $n\to \infty$. More precisely, by deriving delicately several Pohozaev-type identities, we shall conclude from (\ref{1.7}) that $(2-p_n)x_n^2=48+o(1)$ as $n\to\infty$. It thus implies that $p_n>2$ cannot occur, a contradiction. This therefore completes the proof of Theorem \ref{thm1.1}. We refer Sections 2-4 for the detailed proof of Theorem \ref{thm1.1}.

One can observe from Sections 2-4 that the above proof strategy of Theorem \ref{thm1.1} holds essentially for the limiting case $p\to 2$. As a byproduct of proving Theorem \ref{thm1.1}, we are thus able to address the refined limiting profile of minimizers for $J_2(p)$ as $p\nearrow 2$. Towards this result, we define for $i=1,2,$
\begin{equation}\label{1:win}
\widetilde w_{in}(x):=\Big(\frac{2\sqrt{|\mu_{in}|p_n}}{e^{(p_n-1)\sqrt{|\mu_{in}|}x}
+e^{(1-p_n)\sqrt{|\mu_{in}|}x}}\Big)^{\frac{1}{p_n-1}}>0,
\end{equation}
where $\mu_{1n}<\mu_{2n}<0$ are the same as in (\ref{1:x4}). In Section 4 we shall establish the following limiting profile of minimizers for $J_2(p)$ as $p\nearrow 2$.

\begin{prop}\label{prop1.2}
Let $(u_{1n}, u_{2n})$ be a minimizer of $J_2(p_n)$ satisfying \eqref{1:x4}, where  $p_n\nearrow 2$ as $n\to\infty$,  $u_{1n}(0)=\max_{x\in\R}u_{1n}(x)$ and $u_{2n}(0)\leq0$. Then there exists a subsequence, still denoted by $\big\{(u_{1n}, u_{2n})\big\}$,  of $\big\{(u_{1n}, u_{2n})\big\}$   such that up to   reflections (across the y-axis) if necessary,
\begin{eqnarray}\label{prop1.1}
\left\{
\begin{array}{lll}
\!	\!	\!	 u_{1n}(x)=\frac{\sqrt{2}}{2} \widetilde w_{1n}(x)+\frac{\sqrt{2}}{2}\widetilde w_{2n}(x-x_n)+O(x_ne^{-\frac{x_n}{4}})\ \ as\,\ n\to\infty, \\[1mm]
\!	\!	\!	 u_{2n}(x)=-\frac{\sqrt{2}}{2} \widetilde w_{1n}(x)+\frac{\sqrt{2}}{2} \widetilde w_{2n}(x-x_n)+O(x_ne^{-\frac{x_n}{4}})\ \ as\,\ n\to\infty,
\end{array}
\right.
\end{eqnarray}
where  $\widetilde w_{in}(x)$ is given by (\ref{1:win}), 
and the global maximal point $x_n>0$ of $u_{2n}$ satisfies  $x_n=[4\sqrt{3}+o(1)](2-p_n)^{-\frac{1}{2}}$ as $n\to\infty$.
\end{prop}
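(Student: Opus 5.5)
Our plan is to rerun the three-step strategy sketched above for Theorem \ref{thm1.1}, now with $p_n\nearrow 2$; apart from the sign of $p_n-2$ in the last step, nothing in that strategy used $p_n>2$.

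\textbf{Step 1: compactness and splitting.} Since $J_2(p_n)\to J_2(2)$ and $J_2(2)$ has no minimizer by \cite[Theorem 8]{ii}, concentration-compactness applied to the density $\rho_n=u_{1n}^2+u_{2n}^2$ forces dichotomy. The density then splits into two bumps. Each bump carries mass $1$, which is the mass of the scalar problem at $p=2$ associated with $w_*$: indeed $\|w_*\|_2^2=a^*/4=1$. Hence $x_n\to\infty$ and the convergence \eqref{1.6} holds in $L^2\cap L^\infty$. We have $\mu_{in}\to -1/16$ for $i=1,2$, and uniform exponential decay follows from comparison arguments. Reflecting across the $y$-axis if necessary, the normalization $u_{2n}(0)\le 0$ fixes the signs as in \eqref{prop1.1}.

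\textbf{Step 2: expansion.} Using the nondegeneracy of $w$ and the orthonormality constraints, we choose parameters $(a_n,b_n,\delta_n,\eta_n)$ by four orthogonality conditions, exactly as in \eqref{1.7}. The error $(\phi_n,\psi_n)$ solves a linearized system whose right-hand side consists of interaction terms such as $\widetilde w_{1n}\widetilde w_{2n}(\cdot-x_n)$. These terms are of size $x_ne^{-\sqrt{|\mu_{2n}|}x_n}$, and $\sqrt{|\mu_{2n}|}\to 1/4$. Coercivity of the linearized operator on the complement of the kernel then gives $\|\phi_n\|_\infty+\|\psi_n\|_\infty=O(x_ne^{-x_n/4})$. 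The shifts satisfy $\delta_n,\eta_n=O(x_ne^{-x_n/4})$, so they can be absorbed into this error.

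Next we pass to the rotated pair $(\hat u_{1n},\hat u_{2n})$ and test its system against $\widetilde w_{1n}$ and $\widetilde w_{2n}(\cdot-x_n)$. The cross term $\int\rho_n^{p_n-1}u_{1n}u_{2n}$ vanishes, because $\mu_1\neq\mu_2$ makes the two components orthogonal eigenfunctions. This relation forces the weights to balance, giving $a_n^2\to 1/2$ and hence $a_0=\sqrt2/2$. Tracking the corrections yields $a_n,b_n=\mp\frac{\sqrt2}{2}+O(x_ne^{-x_n/4})$, which is the stated profile \eqref{prop1.1}.

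\textbf{Step 3: locating $x_n$.} Multiply each equation by $u_{in}'$ and integrate over $(-\infty,x_n/2)$ to obtain Pohozaev-type identities. The boundary terms at $x_n/2$ capture the interaction between the bumps. For $p\neq2$, the profile $\widetilde w_{in}$ gives a $\rho$ whose tail decays like $e^{-2\sqrt{|\mu|}x}$ but is corrected by $(p-1)$-dependent exponents, so the mismatch between the two bump profiles produces terms of order $(2-p_n)$. The $p=2$ integrable structure cancels the leading exponential interaction. What remains is a polynomial balance between the $(2-p_n)$ term and a $1/x_n^2$ term, which arises from the $\mu_{1n}-\mu_{2n}$ splitting. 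Combining the identities gives $(2-p_n)x_n^2=48+o(1)$, that is, $x_n=[4\sqrt3+o(1)](2-p_n)^{-1/2}$.

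This Pohozaev computation is the main obstacle. It requires expanding $\mu_{2n}-\mu_{1n}$ to the correct order and checking that the $O(x_ne^{-x_n/4})$ errors are negligible compared with $(2-p_n)\sim x_n^{-2}$. My first attempt would compute $\mu_{in}$ through the mass constraint $\|\widetilde w_{in}\|_2^2=1+o(1)$, expanded in $p_n-2$.
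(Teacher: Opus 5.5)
Your outline follows the paper's three steps, but there are two genuine gaps: the mechanism you give for $a_0=\sqrt2/2$ is wrong, and your order-of-magnitude bookkeeping in Step 3 would produce the wrong constant.

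\textbf{Determination of $a_0$ and of $\mu_{2n}-\mu_{1n}$.} The relation you invoke is false. Orthogonality of eigenfunctions gives $\int_{\R}u_{1n}u_{2n}\,dx=0$, hence $\int_{\R}u_{1n}'u_{2n}'\,dx=\int_{\R}\rho_n^{p_n-1}u_{1n}u_{2n}\,dx$. It does not make the latter vanish. Even the correct identity would not help: inserting \eqref{1.6} shows both sides vanish to leading order for every $a_0$.

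At the next order, orthogonality only fixes $K_n=\langle\hat u_{1n},\hat u_{2n}\rangle\approx-\int_{\R}w_{1n}w_{2n}\,dx$, i.e.\ $a_n+b_n$. Testing the rotated equations against $w_{2n}$ and $w_{1n}$ only gives $a_n\sqrt{1-b_n^2}(\mu_{2n}-\mu_{1n})\approx-b_n\sqrt{1-a_n^2}(\mu_{2n}-\mu_{1n})\approx\frac14e^{-\sqrt{|\mu_{2n}|}x_n}$. That is again compatible with any $a_0\in(0,\sqrt2/2]$.

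In the paper, $a_0$ is fixed by the normalization $\|\hat u_{1n}\|_2=\|\hat u_{2n}\|_2=1$, i.e.\ \eqref{4.42}. The bump masses differ by $\int_{\R}(w_{1n}^2-w_{2n}^2)\,dx=[8+o(1)](\mu_{2n}-\mu_{1n})$. This difference must be compensated by $-2\int_{\R}(w_{1n}\hat\phi_n-w_{2n}\hat\psi_n)\,dx$. The identity $w_{1n}=\frac{1}{2\mu_{1n}}\mathcal L_{1n}\big(xw_{1n}'+\frac{1}{p_n-1}w_{1n}\big)$ and its analogue for $w_{2n}$ evaluate that quantity as $8(a_n^2+b_n^2)(\mu_{2n}-\mu_{1n})+o(\mu_{2n}-\mu_{1n})$, so $a_n^2\to\frac12$.

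This argument needs to know beforehand that $\mu_{2n}-\mu_{1n}$ is nonzero and of exact order $e^{-\sqrt{|\mu_{2n}|}x_n}$. Your proposed mass expansion $\|\widetilde w_{in}\|_2^2=1+o(1)$ is far too crude for that. The same fact is what your coercivity step lacks. The ansatz puts $b_n\widetilde w_{1n}$, built with $\mu_{1n}$, into $u_{2n}$, so the remainder equations \eqref{AA3.5}--\eqref{AA3.6} are forced by $O(|\mu_{2n}-\mu_{1n}|)$ terms that are not interaction terms. A priori one only knows $\mu_{2n}-\mu_{1n}=o(1)$. Coercivity therefore gives only Lemma \ref{NL1}. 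Closing it requires the projection onto $w_{2n}$ together with the argument, again via \eqref{4.42}, that $\lim a_n\neq0$.

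\textbf{Step 3.} Every term in the balance is $e^{-2\sqrt{|\mu_{2n}|}x_n}$ times a power of $x_n$. The $x_ne^{-2\sqrt{|\mu_{2n}|}x_n}$ contributions cancel; they come from the direct interaction, the $K_n$ term and the $\mu_{2n}-\mu_{1n}$ term. The constant $48$ then comes from balancing $\frac{p_n-2}{128}x_n^2e^{-2\sqrt{|\mu_{2n}|}x_n}$ against the $O(1)\cdot e^{-2\sqrt{|\mu_{2n}|}x_n}$ terms.

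The remainders have size $e^{-\sqrt{|\mu_{2n}|}x_n}$ and are multiplied by interaction factors of the same size. They therefore enter exactly at this order, so the relevant comparison is not with $(2-p_n)\sim x_n^{-2}$. Your bound $O(x_ne^{-x_n/4})$ is too weak: it only controls the cross and quadratic remainder terms by $O(x_ne^{-x_n/2})$ and $O(x_n^2e^{-x_n/2})$.

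You need two things here:
\begin{itemize}
\item the sharp bound \eqref{phi}, $\|\hat\phi_n\|_\infty+\|\hat\psi_n\|_\infty=O(e^{-\sqrt{|\mu_{2n}|}x_n})$;
\item the leading profiles \eqref{4.69a}--\eqref{4.73} of $\hat\phi_n,\hat\psi_n$ near each bump. These come from solving $\big(-D_{xx}+\frac1{16}-w_*^2\big)\hat\psi_0=w_*'$ and $\big(-D_{xx}+\frac1{16}-3w_*^2\big)\hat\phi_0=\frac14w_*$, with kernel components fixed by \eqref{3.7}--\eqref{3.8}.
\end{itemize}
In \eqref{4.74} these profiles give $D_n+E_n+F_n+G_n=\frac18e^{-2\sqrt{|\mu_{2n}|}x_n}+o(e^{-2\sqrt{|\mu_{2n}|}x_n})$. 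Dropping them yields $(2-p_n)x_n^2\to32$ instead of $48$.

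You also need $(2-p_n)x_n\to0$ first (Lemma \ref{N24}). It is used to expand the $p_n$-dependent exponents over intervals of length $\sim x_n$. It is also needed, together with $\sqrt{|\mu_{2n}|}=\frac14+O(p_n-2)$, to replace $e^{-\sqrt{|\mu_{2n}|}x_n}$ by $e^{-x_n/4}$; $\sqrt{|\mu_{2n}|}\to\frac14$ alone does not suffice.

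Finally, the rate $a_n=\frac{\sqrt2}2+O(x_ne^{-x_n/4})$ in \eqref{prop1.1} does not come from tracking corrections in Step 2. In the paper (Lemma \ref{lem5.1}) it is proved after Step 3. It uses a moment identity against $xw_{1n}'$, where the limit $(2-p_n)x_n^2\to48$ is needed to cancel the $x_n^2e^{-2\sqrt{|\mu_{2n}|}x_n}$ terms.
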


It follows from Proposition \ref{prop1.2} that the density $\rho_n(x)$ of minimizers for $J_2(p_n)$ satisfies
\begin{equation}\label{1:winM}
\rho_n(x):=u^2_{1n}(x)+u^2_{2n}(x)=\widetilde w^2_{1n}(x)+\widetilde w^2_{2n}(x-x_n)+O(x_ne^{-\frac{x_n}{4}})\ \ \mbox{as}\,\ p_n\nearrow 2,
\end{equation}
and hence the density $\rho_n(x)$ admits exactly two bumps, as expected in \cite[Section 1.4]{i}, whose distance $x_n$ goes up to infinity as $p_n\nearrow 2$. We predict that Proposition \ref{prop1.2} can further yield other analytical properties of minimizers for $J_2(p)$ as $p\nearrow 2$, which are investigated in the companion work of the present paper.

This paper is organized as follows. Section 2 is devoted to the proof of Proposition \ref{J1} on the convergence of minimizers for $J_2(p)$ as $p\to 2$. In Section 3, by establishing Lemma \ref{a} we derive the limiting behavior (\ref{1.7}) as  $p\to 2$. We first analyze in Section 4 the refined estimate of the global maximal point $x_n>0$ defined in (\ref{1.6K}), based on which we then prove  Theorem  \ref{thm1.1} and Proposition \ref{prop1.2} in Subsection 4.1.  Appendix A is finally focussed on the proofs of Lemmas \ref{lemA.1} and \ref{lemA.2} used in Section 4.

\section{Convergence of Minimizers as $p\to 2$}

Starting from this section, without specific notations, we always suppose that $J_2(p)$ has minimizers as $p\to 2$.
In this section, we mainly address the convergence of minimizers for $J_2(p)$ as $p\to 2$. Following (\ref{1:x4}), let  $(u_{1n} ,u_{2n})$ be a minimizer of $J_2(p_n)$, where $p_n\to2$ as $n\to\infty$, and satisfy the following fermionic NLS system
\begin{eqnarray}\label{x4}
	\left\{
	\begin{array}{lll}
		\!\!\!-u''_{1n}-\big(u_{1n}^2+u_{2n}^2\big)^{p_n-1}\, u_{1n}=\mu_{1n}u_{1n}  & \mathrm{in} \,\ \R,\\[1mm]
		\!\!\!-u''_{2n}-\big(u_{1n}^2+u_{2n}^2\big)^{p_n-1}\, u_{2n}=\mu_{2n}u_{2n}  & \mathrm{in} \,\ \R,
	\end{array}
	\right.
\end{eqnarray}
where $\mu_{1n}<\mu_{2n}<0$ are the $2$-first eigenvalues of the operator $-D_{xx}-\big(u_{1n}^2+u_{2n}^2\big)^{p_n-1}$ in $\R$. Applying (\ref{a8}), in this paper we always denote
 \begin{equation}\label{2.8b}
 	w_\ast(x):
 = \frac{\sqrt{2}}{2}
 \frac{1}{e^{\frac{x}{4}} + e^{-\frac{x}{4}}}>0
\end{equation}
to be the unique (up to translations)  positive solution of the following equation
\begin{equation}\label{2:limitE}
- u'' + \frac{1}{16} u - u^3 = 0\ \ \ \text{in}\ \, \R.
\end{equation}
The main results of this section can be then stated as the following proposition.

\begin{prop}\label{J1}
Let $(u_{1n},u_{2n})$ be a minimizer of $J_2(p_n)$ satisfying \eqref{x4}, where $p_n\in[3/2, 5/2]$ satisfies $p_n\to2$ as $n\to\infty$.
Then there exist a constant $a_0\in[0, \sqrt{2}/2]$ and  a sequence  $\{\bar x_n\}\subset\R$ satisfying $\bar x_n\to\infty$ as $n\to\infty$ such that  up to translations and  reflections (across the y-axis) if necessary,
\begin{equation}\label{J2}
\begin{split}
&(u_{1n}, \pm u_{2n})\\
\to &\big(\sqrt{1-a_0^2}w_{*}(x)+a_0w_{*}(x-\bar x_n),\ -a_0w_{*}(x)+\sqrt{1-a_0^2}w_{*}(x-\bar x_n)\big)
\end{split}
\end{equation}
strongly in $L^{r}(\R)$ as $n\to\infty$, where $r\in[2, \infty]$ is arbitrary, $w_{*}(x)>0$ is as in (\ref{2.8b}), and the multiplier $\mu_{in}$ given by  \eqref{x4} satisfies
\begin{equation}\label{mun}
\lim\limits_{n\to\infty}\mu_{1n}=\lim\limits_{n\to\infty}\mu_{2n}=-1/16.
\end{equation}
\end{prop}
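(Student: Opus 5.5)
We argue by compactness combined with the known critical structure at $p=2$, where (see \cite{ii}) $J_2(2)=2e(2)$, with $e(2)$ the one-body minimal energy of the cubic problem. In one dimension with unit mass, $e(2)$ is attained by $w_*$, since $\|w_*\|_2^2=\frac14\|w\|_2^2=1$. We also use that $J_2(2)$ is not attained, the two particles escaping to infinity. The plan has four steps.

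\textbf{Step 1: uniform bounds and continuity of $J_2$.} First, $J_2(p)$ is continuous at $p=2$. For the upper bound, take two far-apart translates of $w_*$, orthonormalized; this gives $\limsup J_2(p_n)\le 2e(2)$. For the lower bound, the Lieb--Thirring/Gagliardo--Nirenberg inequality gives $\int\rho^p\le C(\int|u_1'|^2+|u_2'|^2)^{(p-1)/2}$ uniformly for $p\in[3/2,5/2]$. This bounds $(u_{1n},u_{2n})$ in $H^1$, and then $\int\rho_n^{p_n}-\int\rho_n^2\to0$, so $J_2(p_n)\to J_2(2)$. Consequently $(u_{1n},u_{2n})$ is a minimizing sequence for $J_2(2)$. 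The multipliers $\mu_{in}$ are bounded, since they are eigenvalues of $-D_{xx}-\rho_n^{p_n-1}$ with a bounded potential, and they satisfy $\mu_{1n}\le\mu_{2n}<0$.

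\textbf{Step 2: concentration--compactness.} Next we apply concentration-compactness to the density $\rho_n$, which has mass $2$. Vanishing is excluded because $J_2(2)<0$. Since $J_2(2)$ is not attained while its minimizing sequences must split, the argument of \cite{i} forces $\rho_n$ to split into exactly two pieces of mass $1$ each, centered at $0$ and at $\bar x_n\to\infty$. (Each piece is at least a one-particle object, and the binding inequality $J_2(2)\le J_1+J_1$ holds with equality.) Localizing with cutoffs, the $2\times2$ matrix-valued pieces have energies whose sum tends to $2e(2)$. Each piece is a rank-one state of trace $1$ up to $o(1)$; otherwise its energy would strictly exceed $e(2)$, by strict convexity/scaling of the one-body problem. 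Hence near $0$ we have $(u_{1n},u_{2n})\approx(\alpha,\beta)w_*(x)$, and near $\bar x_n$ we have $(u_{1n},u_{2n})\approx(\gamma,\delta)w_*(x-\bar x_n)$, using uniqueness of the cubic minimizer $w_*$ up to translation and sign. Orthonormality of $(u_{1n},u_{2n})$, together with $\langle w_*(\cdot),w_*(\cdot-\bar x_n)\rangle\to0$, forces the $2\times2$ matrix with rows $(\alpha,\gamma)$ and $(\beta,\delta)$ to be orthogonal. We write its entries as $\alpha=\sqrt{1-a_0^2}$, $\gamma=a_0$, $\beta=-a_0$, $\delta=\pm\sqrt{1-a_0^2}$; the sign is absorbed by $\pm u_{2n}$. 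Reflection, swapping the two bumps, normalizes $a_0\in[0,\sqrt2/2]$, and $u_{1n}>0$ fixes the sign of $\alpha$. This yields $L^2$ convergence.

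\textbf{Step 3: multipliers and upgrade of the topology.} Testing \eqref{x4} against $u_{in}$ and passing to the limit with this profile, using the equation \eqref{2:limitE}, gives $\mu_{in}\to-1/16$. Convergence in $L^\infty$ follows from the $H^1$ bound combined with $L^2$ convergence, via the one-dimensional interpolation $\|f\|_\infty^2\le\|f\|_2\|f'\|_2$. Interpolation then gives every $r\in[2,\infty]$.

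\textbf{Main obstacle.} The hardest step is Step 2. We must rule out mass splitting unevenly, for instance a single bump carrying both orbitals. This is excluded because a two-orbital state confined near one center has energy at least $J_2(2)$ itself, which combined with the Pauli constraint gives the strict inequality. We must also rule out more than two bumps, and show that each bump is asymptotically a single $w_*$. I would handle this by using the subadditivity chain $J_2(2)=2J_1(2)<J_1(\lambda)+J_1(2-\lambda)$ for non-integer splits, and by the rank-one/trace argument for matrix-valued local pieces.
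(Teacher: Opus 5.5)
Your route, concentration--compactness for the relaxed density-matrix problem followed by identification of each bump, differs from the paper's. The step it hinges on, excluding an uneven split in Step 2, rests on an inequality that fails as written. Write $e(\lambda)$ for the one-orbital cubic energy at mass $\lambda$, which is what your $J_1(\lambda)$ appears to mean. Scaling gives $e(\lambda)=\lambda^3e(1)=-\lambda^3/48$. Since $\lambda^3+(2-\lambda)^3\ge 2$ on $[0,2]$, you get $e(\lambda)+e(2-\lambda)\le 2e(1)=J_2(2)$, which is the opposite of your claimed strict inequality (for $\lambda=1/2$ the sum is $-3.5/48<-2/48$).

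The piece of trace $2-\lambda>1$ produced by dichotomy is not a single orbital. It is a localized Gram matrix $0\le\gamma\le 1$, so you need a lower bound on the relaxed fermionic energy $\inf\{\mathrm{Tr}(-\partial_x^2\gamma)-\frac12\int\rho_\gamma^2:\ 0\le\gamma\le1,\ \mathrm{Tr}\,\gamma=\mu\}$ for $\mu\in(1,2)$, and your proposal never supplies one. A bound that works is that this energy is $\ge \mu J_1(2)=-\mu/48$. It follows by duality (insert $V=\rho_\gamma$, then apply H\"older to the occupation numbers) from the sharp one-dimensional Lieb--Thirring inequality $\sum_j|\lambda_j(-\partial_x^2-V)|^{3/2}\le\frac{3}{16}\int V_+^2$. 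For traces $\lambda\le 1$ you also have the lower bound $\lambda^3J_1(2)$, by Hoffmann-Ostenhof. Together these give a sum $\ge(\lambda^3+2-\lambda)J_1(2)>2J_1(2)$ for $\lambda\in(0,1)$, and the same works for three or more pieces. Without some such input, Step 2 does not exclude, say, trace $3/2$ near one center and $1/2$ near the other.

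Likewise, ``rank-one up to $o(1)$'' does not follow from strict convexity or scaling; you need a stability argument. One option uses the identity $\mathcal{E}(\sum_j n_jP_j)-\sum_j n_j\mathcal{E}(P_j)=\frac12\sum_j n_j\|\rho_j-\rho\|_2^2$ (with $\sum_j n_j=1$) on the eigen-decomposition of the local Gram matrix. Combine it with orthogonality of the eigenvectors and compactness of minimizing sequences for $J_1(2)$.

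The paper never needs fractional binding. It takes weak limits centered at the density maximum and at the maximum of $u_{1n}$, and passes to the limit in the Euler--Lagrange system. It uses $\mu_{1n}+\mu_{2n}=\frac{p_n+1}{3-p_n}J_2(p_n)\to 3J_2(2)=-1/8$ and shows $\mu_2<0$. It then gets $\mu_1=\mu_2=-1/16$ from the classification in \cite{ii}. Simplicity of the ground state gives the weak limit $(\sqrt{1-a^2}w_*,aw_*)$, which has mass exactly $1$. Mass quantization is then automatic: $\bar u_{1n}=\sqrt{1-a^2}u_{1n}+au_{2n}$ has norm $1=\|w_*\|_2$, so weak convergence is strong. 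The orthonormal partner $\bar u_{2n}$ must then carry the other unit of mass to infinity, where the same argument identifies it as $\pm w_*(\cdot-\bar x_n)$.

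Your Steps 1 and 3 are essentially fine, and your $L^\infty$ upgrade via $\|f\|_\infty^2\le\|f\|_2\|f'\|_2$ is shorter than the paper's. One caveat: testing the equation against $u_{in}$ needs $H^1$ convergence, not just $L^2$. Either derive it from convergence of the energy, or pass to the limit weakly in the equation at a bump whose coefficient is nonzero.
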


Since the problem $J_2(p_n)$ is translation invariant, without loss of generality, we now suppose  that
\begin{eqnarray}\label{J10}
	u_{1n}^2(0) + u_{2n}^2(0)
	= \max_{x\in\mathbb{R}} \bigl( u_{1n}^2(x) + u_{2n}^2(x) \bigr).
\end{eqnarray}
In order to prove Proposition \ref{J1}, we first establish the following lemma.

\begin{lem}\label{lem2.1}
Let $(u_{1n},u_{2n})$ be a minimizer of $J_2(p_n)$ satisfying \eqref{x4} and \eqref{J10}, where $p_n\in[3/2, 5/2]$ satisfies $p_n\to2$ as $n\to\infty$. Then we have the following conclusions:
\begin{enumerate}
\item [(i).]
There exists a constant $\delta>0$, independent of $n>0$, such that
\begin{equation}\label{J6}
u_{1n}^2(0) + u_{2n}^2(0) \ge \delta>0 \ \  \text{as}\ \, n\to\infty.
\end{equation}
	
\item [(ii).]  $(u_{1n},u_{2n})$ satisfies
\begin{equation*}
(u_{1n},u_{2n}) \rightharpoonup (u_1,u_2) \quad \text{weakly in } H^1(\mathbb{R})\ \, \text{as}\ \, n\to\infty,
\end{equation*}
and $(\mu_{1n}, \mu_{2n})$ given by \eqref{x4} satisfies
\begin{equation}\label{mu}
\lim\limits_{n\to\infty}\mu_{1n}=\lim\limits_{n\to\infty}\mu_{2n}=-1/16,
\end{equation}
where $(u_1,u_2)$ is given by
\begin{equation}\label{J9}
(u_1,u_2)
= \bigl( \sqrt{1-a^2}\, w_\ast,\; a\, w_\ast \bigr) \ \ for\ some \,\ a \in [-1,1],
\end{equation}
and $w_\ast(x)>0$ is as in (\ref{2.8b}).
\end{enumerate}
\end{lem}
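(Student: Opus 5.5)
The plan is to compare $J_2(p_n)$ with the limiting problem at $p=2$, using three facts. First, $J_2(2)=2J_1(2)$, with $J_2(2)$ not attained. Second, the one-orbital problem $J_1(2)$ is attained exactly by translates of $w_*$. Third, $w_*$ has unit mass: indeed $\|w_*\|_2^2=\tfrac1{16}\cdot 4\cdot\|w\|_2^2=1$.

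\textbf{Upper bound.} I would first show $\limsup_n J_2(p_n)\le 2J_1(2)<0$. To do this I would use the trial pair $\big(w_*(x),\,w_*(x-R)\big)$, Gram--Schmidt orthonormalised, with $R=R_n\to\infty$ slowly. Since $\int w_*^{2p}\to\int w_*^4$ as $p\to2$, this gives the bound. For the matching lower bound I would use the Gagliardo--Nirenberg inequalities
$$\int(u_1^2+u_2^2)^p\le 2^{p-1}\sum_i\int u_i^{2p}\le C\sum_i\|u_i'\|_2^{p-1}\|u_i\|_2^{p+1},$$
where $p-1<2$ uniformly for $p_n\in[3/2,5/2]$. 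These give a uniform $H^1$ bound on $(u_{1n},u_{2n})$ and $\liminf_n J_2(p_n)\ge J_2(2)$. Hence $J_2(p_n)\to 2J_1(2)$.

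\textbf{Part (i).} If $\|\rho_n\|_\infty=\rho_n(0)\to0$, then
$$\int\rho_n^{p_n}\le\|\rho_n\|_\infty^{p_n-1}\int\rho_n=2\|\rho_n\|_\infty^{p_n-1}\to0.$$
Then $\mathcal E_{p_n}\ge -o(1)$, which contradicts $J_2(p_n)\le -c<0$. This gives \eqref{J6}.

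\textbf{Part (ii).} By the $H^1$ bound, extract a weak limit $(u_1,u_2)$ with local uniform convergence. By \eqref{J10} and (i), $(u_1,u_2)\neq0$. The multipliers $\mu_{in}=\|u_{in}'\|_2^2-\int\rho_n^{p_n-1}u_{in}^2$ are bounded, so along a subsequence $\mu_{in}\to\mu_i\le0$. The limit then solves $-u_i''-(u_1^2+u_2^2)u_i=\mu_iu_i$.

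To identify the limit I would run a concentration-compactness splitting. Write $\gamma_n=\sum_i|u_{in}\rangle\langle u_{in}|$ and split it into a piece localised near $0$ and a remainder, with a smooth partition at a scale going to infinity. Let $\gamma$ be the Gram matrix of $(u_1,u_2)$, with $\lambda:=\mathrm{Tr}\,\gamma\in(0,2]$. The energy splits, up to $o(1)$, into the energy of the limit plus the energy of the remainder, which has trace $2-\lambda$.

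The key inequality is the bound by the $p=2$ scaling law. For a one-body density matrix $0\le\gamma\le1$ in 1D with cubic nonlinearity, $\mathcal E(\gamma)\ge J(\mathrm{Tr}\,\gamma)$ with $J(\lambda)=\lambda^3J_1(2)$ for $\lambda\le1$. This follows from the Lieb--Thirring / Gagliardo--Nirenberg argument of \cite{i,ii}, together with the strict subadditivity $\lambda^3+(1-\lambda)^3<1$. Matching the total energy $2J_1(2)$ then forces the following: the localised piece has trace exactly $1$, it is rank one, and it is an optimiser of $J_1(2)$. Hence $(u_1,u_2)=(\cos\theta,\sin\theta)\,v$ with $v$ a positive-up-to-sign minimiser of $J_1(2)$ of unit mass. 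Since the maximum of $\rho_n$ sits at $0$, uniqueness of the ground state gives $v=w_*$, which is \eqref{J9}.

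\textbf{Multipliers.} The same energy matching shows that the remaining unit of mass forms a second $w_*$-bump at distance $\to\infty$. Then $\rho_n^{p_n-1}$ is, up to $o(1)$ in $L^\infty$, a sum of two far-separated copies of $w_*^2$. The operator $-\partial_{xx}-w_*^2$ has lowest eigenvalue $-1/16$, simple, with eigenfunction $w_*$, and a spectral gap above it. A two-well (IMS localisation) argument then shows that the two lowest eigenvalues $\mu_{1n}\le\mu_{2n}$ of $-D_{xx}-\rho_n^{p_n-1}$ both converge to $-1/16$. This gives \eqref{mu}.

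I expect the main obstacle to be the dichotomy step: showing that the mass captured at the origin is exactly one, rank one, and not fractional or split between orbitals. This rests on the sharp lower bound $\mathcal E(\gamma)\ge(\mathrm{Tr}\,\gamma)^3J_1(2)$ for fermionic density matrices of trace at most one, and on controlling the $p_n\ne2$ errors uniformly.
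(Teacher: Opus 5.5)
Your route differs from the paper's, which never does a density-matrix dichotomy. The paper first fixes the multipliers from the Euler--Lagrange equations:
\begin{itemize}
\item the virial identity $\mu_{1n}+\mu_{2n}=\frac{p_n+1}{3-p_n}J_2(p_n)\to 3J_2(2)=-\frac18$;
\item a convexity comparison with $J_1(p_n)$ to exclude $\mu_2=0$;
\item the bound $\mu_1\ge-\frac1{16}$, from $\int\tilde u_1^2=4\sqrt{|\mu_1|}\le1$ for the limit centred at the maximum of $u_{1n}$ (using the classification of the $p=2$ system in \cite{ii}).
\end{itemize}
Only then does it read off the profile: equal multipliers plus simplicity of the ground state of $-D_{xx}-(u_1^2+u_2^2)$ force $u_2=ku_1$. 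Your energetic route, if it closed, would also give the escaping second $w_*$-bump, which the paper only obtains later in Lemma \ref{thm2.1} and Proposition \ref{J1}. However, the dichotomy step, which you yourself flag as the main obstacle, has a genuine hole.

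\textbf{The missing bound on traces in $(1,2)$.} For $\lambda\in(0,1)$ the remainder has trace $2-\lambda\in(1,2)$; for $\lambda\in(1,2)$ the localised piece does. So you need a lower bound on $J(\mu)$ for $\mu\in(1,2)$, but you only state $J(\lambda)=\lambda^3J_1(2)$ for $\lambda\le1$. The argument behind that bound (reduction to $\sqrt\rho$ via Hoffmann--Ostenhof, plus scaling) gives only $J(\mu)\ge\mu^3J_1(2)$ for $\mu>1$, which is useless here. At $\lambda=\frac12$ it yields $J(\frac12)+J(\frac32)\ge\frac72J_1(2)$, which lies below $2J_1(2)$ and so gives no contradiction. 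The fact $J_2(2)=2J_1(2)$ does not rescue this: with subadditivity it gives only the non-strict $J(\lambda)+J(2-\lambda)\ge2J_1(2)$, which is compatible with energy matching.

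What you need is the exact value $J(1+s)=J_1(2)(1+s^3)$ for $s\in[0,1]$. It follows from the duality $\mathcal{E}_2(\gamma)=\min_V\big[\mathrm{Tr}(-\partial_x^2-V)\gamma+\frac12\int V^2\big]$ together with the sharp one-dimensional Lieb--Thirring inequality $\sum_j|e_j|^{3/2}\le\frac{3}{16}\int V^2$ (sharp constant at $\kappa=3/2$). With it, $J(\lambda)+J(2-\lambda)=J_1(2)\big[1+\lambda^3+(1-\lambda)^3\big]>2J_1(2)$ for $\lambda\in(0,1)$. This is exactly where your inequality $\lambda^3+(1-\lambda)^3<1$ genuinely enters.

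\textbf{The case $\lambda=2$.} Energy matching cannot exclude it, since $J(2)+J(0)=2J_1(2)$. Here you must explicitly invoke that $J_2(2)$ has no minimiser: the weak limit would be an orthonormal pair with $\mathcal{E}_2\le J_2(2)$.

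With these two ingredients added, your remaining steps are sound:
\begin{itemize}
\item rank one from the equality case;
\item $x_0=0$ from $C^1_{loc}$ convergence and \eqref{J10};
\item $\cos\theta\ge0$ from $u_{1n}>0$;
\item compactness of the trace-one remainder;
\item the two-well argument giving $\mu_{1n},\mu_{2n}\to-\frac1{16}$.
\end{itemize}
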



\noindent{\bf Proof.}
$(i)$. We first prove that
\begin{equation}\label{2.7}
\lim\limits_{n\to\infty}J_2(p_n)=J_2(2)<0.
\end{equation}
Actually,  recall from  \cite{ii}  the following Lieb-Thirring inequality: for any orthonormal system $(\varphi_1, \varphi_2)$,
\begin{eqnarray*}\label{lt}
\inte\big(|\nabla \varphi_1|^2+|\nabla \varphi_2|^2\big)dx\geq c_{LT}\inte(\varphi_1^2+\varphi_2^2)^3dx,
\end{eqnarray*}
where the constant $c_{LT}>0$ is independent of $\varphi_i\in H^1(\R)$. Employing Young's inequality, we then deduce from \cite[Lemma 12 (i)]{i}  that
\begin{equation}\label{2.6}
\begin{split}
0\geq&\lim\limits_{n\to\infty}J_2(p_n)=\lim\limits_{n\to\infty}\mathcal{E}_{p_n}(u_{1n}, u_{2n})\\
\geq&\lim\limits_{n\to\infty}\Big[\inte\sum_{i=1}|\nabla u_{in}|^2dx-\frac{\varepsilon}{p_n}\inte\big(\sum_{i=1}u_{in}^2\big)^3dx-\frac{C(\varepsilon)}{p_n}\inte\big(\sum_{i=1}u_{in}^2\big)dx\Big]\\
\geq&\Big(1-\frac{\varepsilon}{ c_{LT}}\Big)\lim\limits_{n\to\infty}\inte\sum_{i=1}|\nabla u_{in}|^2dx-2C(\varepsilon),
\end{split}
\end{equation}
where $\varepsilon\in(0, c_{LT}/2)$, and $C(\varepsilon)>0$ is independent of $n>0$.
This yields that the sequence
\begin{equation}\label{2.8a}
\{(u_{1n}, u_{2n})\}\ \ \text{is  bounded uniformly in}\ H^{1}(\R)\times H^{1}(\R).
\end{equation}
Applying Taylor's expansion, we thus conclude that
\begin{equation}\label{2.8}
\begin{split}
&J_2(p_n)-J_2(2)
\geq\mathcal{E}_{p_n}(u_{1n}, u_{2n})-\mathcal{E}_{2}(u_{1n}, u_{2n})\\
=&\big(\frac{1}{2}-\frac{1}{p_n}\big)\inte(u_{1n}^2+ u_{2n}^2)^{p_n}dx-\frac{1}{2}\inte(u_{1n}^2+ u_{2n}^2)^{p_n}-(u_{1n}^2+ u_{2n}^2)^2dx\\
=&O(|p_n-2|)\ \ \ \text{as}\ \ n\to\infty.
\end{split}
\end{equation}
Here and below $C_n:=O(|p_n-2|)$ means that  there exists a constant $C>0$, independent of $n>0$, such that $|C_n|\leq C |p_n-2|$ holds for any sufficiently large $n>0$. Moreover, for any minimizing sequence $\{(v_{1n}, v_{2n})\}$ of $J_2(2)$, the same argument of \eqref{2.8} gives that
\begin{equation}\label{2.9}
\begin{split}
J_2(p_n)-J_2(2)\leq&\mathcal{E}_{p_n}(v_{1n}, v_{2n})-\mathcal{E}_{2}(v_{1n}, v_{2n})+o(1)\\
= &O(|p_n-2|)+o(1)\ \ \ \text{as}\ \ n\to\infty.
\end{split}
\end{equation}
By \cite[Lemma 12 (i)]{i}, we thus obtain from \eqref{2.8} and \eqref{2.9} that \eqref{2.7} holds true.

We now prove that  \eqref{J6} holds true. On the contrary, suppose \eqref{J6} is false. This then implies from (\ref{J10}) that
\begin{equation*}
u_{1n}^2(0)+u_{2n}^2(0)
= \max_{x\in\mathbb{R}} \bigl( u_{1n}^2(x)+u_{2n}^2(x) \bigr)
\rightarrow 0 \quad\hbox{as}\,\ n\to\infty,
\end{equation*}
and hence
\begin{equation*}
\int_{\mathbb{R}} (u_{1n}^2+u_{2n}^2)^{p_n}\,dx
\le \bigl( u_{1n}^2(0)+u_{2n}^2(0) \bigr)^{p_n-1}
\int_{\mathbb{R}} (u_{1n}^2+u_{2n}^2)\,dx
\rightarrow 0 \quad\hbox{as}\,\ n\to\infty.
\end{equation*}
We thus deduce that
\begin{equation*}
\lim_{n\to\infty}J_2(p_n)
= \lim_{n\to\infty}
\left\{
\int_{\mathbb{R}} \bigl( |u_{1n}'|^2+|u_{2n}'|^2 \bigr)\,dx
- \frac{1}{p_n} \int_{\mathbb{R}} (u_{1n}^2+u_{2n}^2)^{p_n}\,dx
\right\}
\ge 0,
\end{equation*}
which however contradicts with \eqref{2.7}. Thus, \eqref{J6} is proved, which therefore completes the proof of Lemma \ref{lem2.1} (i).

$(ii)$.
Since the proof of Lemma \ref{lem2.1} (i) gives that $\{(u_{1n},u_{2n})\}$ is bounded uniformly in $H^1(\mathbb{R})\times H^1(\mathbb{R})$,  there exist a subsequence, still denoted by $\{(u_{1n},u_{2n})\}$, of $\{(u_{1n},u_{2n})\}$ and a function $(u_1, u_2)\in H^1(\mathbb{R})\times H^1(\mathbb{R})$ such that $(u_{1n},u_{2n})$ satisfies
\begin{equation}\label{2.12}
(u_{1n},u_{2n}) \rightharpoonup (u_1,u_2)
\ \  \text{weakly in } H^1(\mathbb{R})\times H^1(\mathbb{R}) \  \ \text{as}\ \ n\to\infty,
\end{equation}
and $(\mu_{1n}, \mu_{2n})$ given by \eqref{x4} satisfies
$$
\mu_{1n}+\mu_{2n}=p_nJ_2(p_n)+(1-p_n)\inte \big(|\nabla u_{1n}|^2+|\nabla u_{2n}|^2\big)dx.
$$
By the fact $\mu_{1n}<\mu_{2n}<0$, we then obtain  that the sequence $\{\mu_{in}\}_n$ is bounded uniformly in $n>0$, $i=1, 2$. Set
$$
\lim\limits_{n\to\infty}\mu_{in}=\mu_i\leq0, \ \, i=1,2,$$
which then gives from \eqref{x4} that
\begin{eqnarray}\label{2.12V}
\left\{
\begin{array}{lll}
\!\!\!-u_1''-\big(u_1^2+u_2^2\big)u_1=\mu_1u_1  & \mathrm{in}\ \R,\\[1.5mm]
\!\!\!-u_2''-\big(u_1^2+u_2^2\big)u_2=\mu_2u_2  & \mathrm{in}\ \R.
\end{array}
\right.
\end{eqnarray}
Following \cite[Eq.~(44)]{i} and  \cite[Theorem~7]{ii}, since $(u_{1n},u_{2n})$ is a minimizer of $J_2(p_n)$, we get from  \eqref{2.7} that
\begin{equation}\label{2.14}
\mu_1+\mu_2
= \lim_{n\to\infty} \big(\mu_{1n}+\mu_{2n}\big)
= \lim_{n\to\infty} \frac{p_n+1}{3-p_n} J_2(p_n)
= 3J_2(2)=6J_1(2)<0,
\end{equation}
where
$$
J_1(2):=\inf\limits_{v\in H^1(\R),\, \|v\|_2^2=1}\big(\inte |v'|^2dx-\frac{1}{2}\inte v^4dx\big).
$$
To finish the proof of Lemma \ref{lem2.1} (ii), the rest is to prove   (\ref{mu}) and (\ref{J9}) in view of (\ref{2.12}) and (\ref{2.12V}).

We now claim that
\begin{equation}\label{2.15}
J_1(2)=-\frac 1 {48}.
\end{equation}
Actually,  applying the concentration-compactness principle  (see, e.g. \cite{con1, concen}), one can conclude that  $J_1(2)$ has a positive minimizer $v\in H^1(\R)$,  where $v>0$  satisfies the Euler-Lagrange equation
\begin{equation*}
-v''-v^3=\mu v\ \ \ \text{in}\ \, \R
\end{equation*}
for some suitable Lagrange multiplier $\mu<0$.
By the uniqueness of positive solutions for \eqref{1.3},  up to  translations, we then have
$v(x)=\sqrt{|\mu|}\,w\big(\sqrt{|\mu|}\,x\big)$ in $\R$,  and
\begin{equation}\label{2.17}
 1=\|v\|_2^2=\sqrt{|\mu|}\,\|w\|_2^2=4\sqrt{|\mu|},
\end{equation}
where $w>0$ is as in \eqref{a8}.
We thus obtain from above that
\begin{equation*}
 J_1(2)=\inte |v'|^2dx-\frac{1}{2}\inte v^4dx=-\frac 1 {48},
\end{equation*}
and the claim \eqref{2.15} is therefore proved.

We obtain from \eqref{2.14} and \eqref{2.15} that
\begin{equation}\label{J8}
\mu_1 \leq \mu_2\leq0 \ \ \ \text{and}\ \ \ \mu_1 + \mu_2 = -1/8.
\end{equation}
We next claim that $\mu_1 \le \mu_2 < 0$. On the contrary,
suppose
\begin{equation}\label{2:J8}
\lim_{n\to\infty} \mu_{2n} = \mu_2=0.
\end{equation}
We then deduce from \eqref{x4} that
\begin{equation*}
\int_{\mathbb{R}} (u'_{2n})^2dx
- \int_{\mathbb{R}} (u_{1n}^2+u_{2n}^2)^{p_n-1} u_{2n}^2dx
= \mu_{2n} \int_{\mathbb{R}} u_{2n}^2dx
\rightarrow 0 \ \ \hbox{as}\ \ n\to\infty,
\end{equation*}
and thus
\begin{align}\label{2.19}
J_2(p_n)
=&
\int_{\mathbb{R}} \bigl( |u_{1n}'|^2+|u_{2n}'|^2 \bigr)\,dx- \frac{1}{p_n} \int_{\mathbb{R}} (u_{1n}^2+u_{2n}^2)^{p_n}\,dx \nonumber\\
=& \int_{\mathbb{R}} |u_{1n}'|^2\,dx
- \frac{1}{p_n} \int_{\mathbb{R}} u_{1n}^{2p_n}\,dx\\
&
- \frac{1}{p_n} \int_{\mathbb{R}}
\bigl[(u_{1n}^2+u_{2n}^2)^{p_n}-u_{1n}^{2p_n}-p_n(u_{1n}^2+u_{2n}^2)^{p_n-1} u_{2n}^2\bigr]\,dx+o(1) \nonumber\\
&\ge J_1(p_n) + o(1)\ \ \hbox{as}\ \ n\to\infty,\nonumber
\end{align}
where we have used the convexity of the function $x\mapsto x^{p_n}$ for $p_n\ge 3/2$.
On the other hand,  the same argument of \eqref{2.7} gives from \cite[Lemma 12]{i} that
$$
\lim\limits_{n\to\infty}J_2(p_n) \leq \lim\limits_{n\to\infty}2J_1(p_n) =2J_1(2)<0,
$$
which however contradicts with \eqref{2.19}. Thus, (\ref{2:J8}) is false, and the claim $\mu_1 \le \mu_2 < 0$ is therefore proved in view of (\ref{J8}).

Since the above analysis gives that $\mu_1 \le \mu_2 < 0$ and $\mu_1 + \mu_2=-1/8 $, in order to prove  (\ref{mu}), it next suffices to show that
\begin{equation}\label{J12}
\mu_1=-1/16.
\end{equation}
We first prove that
\begin{equation}\label{2.61}
	\liminf_{n\to\infty}u_{1n}(z_n)>0,
\end{equation}
where $z_n\in \R$ is a global maximal point of  $u_{1n}(x)$ in $\R$.
Indeed, suppose \eqref{2.61} is false. By the uniform boundedness of $\{(u_{1n}, u_{2n})\}$ in $H^1(\R)\times H^1(\R)$,
we then derive from \eqref{x4} that
\begin{align}\label{2.59}
0>\lim\limits_{n\to\infty}\mu_{1n}=&\lim\limits_{n\to\infty}\inte |\nabla u_{1n}|^2dx-\lim\limits_{n\to\infty}\inte \big(u_{1n}^2+u_{2n}^2\big)^{p_n-1}\, u_{1n}^2dx\nonumber\\
\geq&\lim\limits_{n\to\infty}\inte |\nabla u_{1n}|^2dx
\geq0,\nonumber
\end{align}
a contradiction. This thus proves  \eqref{2.61}.

We now deduce from \eqref{2.61} that  there exist $\tilde{u}_1\in H^1(\R)\backslash\{0\}$ and  $\tilde{u}_2\in H^1(\R)$ such that
\begin{equation}\label{2.23}
\tilde{u}_{in}(x):=u_{in}(x+z_n)\rightharpoonup\tilde{u}_i\ \ \text{weakly\ in}\ H^1(\R)\ \ \text{as}\ \ n\to\infty,\ \   i=1,2,
\end{equation}
where $(\tilde{u}_1, \tilde{u}_2)$ solves the following elliptic system
\begin{eqnarray}\label{2.63b}
\left\{
\begin{array}{lll}
\!\!\!-\tilde{u}_1''-\big(\tilde{u}_1^2+\tilde{u}_2^2\big)\tilde{u}_1=\mu_1\tilde{u}_1  & \mathrm{in}\ \ \R,\\[1.5mm]
\!\!\!-\tilde{u}_2''-\big(\tilde{u}_1^2+\tilde{u}_2^2\big)\tilde{u}_2=\mu_2\tilde{u}_2  & \mathrm{in}\ \ \R.
\end{array}
\right.
\end{eqnarray}
Since $\tilde{u}_{1n}(x)>0$ in $\R$, the maximum principle (cf. \cite{elli}) yields that $\tilde{u}_1(x)>0$ in $\R$.
we next continue the proof by considering separately two cases: either $\tilde{u}_2(x)\equiv 0$ in $\R$ or $\tilde{u}_2(x)\not\equiv 0$ in $\R$.

For the case where $\tilde{u}_2(x)\equiv 0$ in $\R$,  similar to \eqref{2.17}, we obtain from \eqref{2.63b} that
\[
1=\liminf\limits_{n\to\infty}\inte\tilde{u}_{1n}^2dx\geq\inte \tilde{u}_1^2dx=4\sqrt{|\mu_1|},
\]
and thus $\mu_1\geq-\frac{1}{16}$. This thus yields from \eqref{J8}  that \eqref{J12} holds true, and  (\ref{mu}) is thus proved. For the case where $\tilde{u}_2(x)\not\equiv 0$ in $\R$, we claim that
\begin{equation}\label{2.25}
\mu_1=\mu_2.
\end{equation}
Indeed, since $\tilde{u}_{1n}(x)>0$ in $\R$,  if $\mu_1<\mu_2<0$, then  the  simplicity of the first eigenvalue for $-D_{xx}-\big(\tilde{u}_1^2+\tilde{u}_2^2\big)$ in $\R$ yields that there exists $x_0\in\R$ such that $\tilde{u}_2(x_0)=0$.
Applying the classification result of
\cite[Lemmas 15 \& 16]{ii},  we thus obtain from \eqref{2.63b} that
\begin{equation*}
1=\liminf\limits_{n\to\infty}\inte\tilde{u}_{1n}^2dx\geq\int_{\mathbb{R}} \tilde{u}_1^2 dx= 4\sqrt{|\mu_1|}.
\end{equation*}
This then gives from \eqref{J8} that $\mu_1=-1/16<\mu_2$, $i.e.,$ $\mu_1+\mu_2>-1/8$, which however contradicts with
\eqref{J8}. Thus, the claim (\ref{2.25}) is proved, which further implies that \eqref{J12} holds true. The also proves  (\ref{mu}).

We finally address the proof of (\ref{J9}). The above analysis shows that the limit $(u_1, u_2)$ of (\ref{2.12}) is a solution of the following elliptic system
\begin{equation}\label{J7}
\left\{
\begin{aligned}
- u_1'' - (u_1^2 + u_2^2) u_1 &= -\frac{1}{16} u_1\ \   \text{in}\ \, \R, \\
- u_2'' - (u_1^2 + u_2^2) u_2 &= -\frac{1}{16} u_2\ \   \text{in}\ \, \R.
\end{aligned}
\right.
\end{equation}
Since Lemma \ref{lem2.1} (i) gives that $u^2_1(x)+u^2_2(x)\not \equiv 0$ in $\R$, we obtain that one of the following three cases occurs:

Case 1. $u_1(x)\not\equiv0$ and $u_2(x)\equiv0$ in $\R$;

Case 2. $u_1(x)\equiv0$ and $u_2(x)\not\equiv0$ in $\R$;

Case 3. $u_1(x)\not\equiv0$ and $u_2(x)\not\equiv0$ in $\R$.

Because $u_{1n}(x)>0$ holds in $\R$,    the maximum principle yields that for Cases 1 \& 3, we have  $u_1(x)>0$ in $\R$. Applying the uniqueness of   positive solutions for  \eqref{1.3} and  the  simplicity of the first eigenvalue for $-D_{xx}-\big({u}_1^2+{u}_2^2\big)$ in $\R$,
we derive from (\ref{J7}) that there exists some point $ x_0\in\R$ such that
\begin{equation}\label{2.29}
\text{Case\ 1}: \ (u_1(x), u_2(x))=\big(w_*(x-x_0), \, 0\big) \ \  \text{in} \ \, \R\times \R,
\end{equation}
and
\begin{equation}\label{2.30a}
\text{Case\ 3}: \ u_2(x)=ku_1(x)=\frac{k}{\sqrt{1+k^2}}w_*(x-x_0)\ \   \text{in} \ \, \R\times \R
\end{equation}
holds for some  $k\in\R\backslash\{0\}$.
As for Case 2, we however obtain from (\ref{J7}) that
\begin{equation}\label{2.31}
u_2(x)=\frac{\sqrt{2}a_1}{e^{-\frac{1}{4}(x-x_0)}+4a_1^2e^{\frac{1}{4}(x-x_0)}}\ \   \text{holds for some}\ a_1\neq0\ \text{and}\ x_0\in\R,
\end{equation}
where we have used the classification result of \cite[Lemmas~15 \& 16]{ii}.

Since $u_{1n}^2+u_{2n}^2\to u_1^2+u_2^2$ strongly in $C^1_{\mathrm{loc}}(\mathbb{R})$ as $n\to\infty$,
it gives from (\ref{J10}) that $\big(u_1^2(x)+u_2^2(x)\big)'|_{x=0}=0$. This then yields  from \eqref{2.29}--\eqref{2.31} that
$$
x_0=0 \ \ \text{holds for Cases}\  1\ \&\ 3,
$$
and
\begin{equation}\label{2.32}
4a_1^2=e^{\frac{1}{2}x_0}\ \ \text{holds for Case}\  2.
\end{equation}
We thus conclude from above that  there exists a constant $a\in[-1,1]$ such that
\begin{equation*}
\big(u_1(x), u_2(x)\big)=\bigl( \sqrt{1-a^2}\, w_\ast(x),\; a\, w_\ast(x) \bigr)\ \ \ \text{in}\ \ \R\times \R,
\end{equation*}
$i.e.,$ (\ref{J9}) is proved. This completes the proof  of Lemma \ref{lem2.1}. \qed


Applying Lemma \ref{lem2.1}, we then establish the following convergence.

\begin{lem}\label{thm2.1}
Suppose $(u_{1n},u_{2n})$ is a minimizer of $J_2(p_n)$ satisfying \eqref{x4} and \eqref{J10}, where $p_n\in[3/2, 5/2]$ satisfies $p_n\to2$ as $n\to\infty$, and let $a\in[-1, 1]$ be given by Lemma \ref{lem2.1}.
Then we have
\begin{equation}\label{2-5}
\bar{u}_{1n}(x):=\sqrt{1-a^2}u_{1n}(x)+au_{2n}(x)\to w_*(x) \ \ \text{strongly\ in}\  L^r(\R)
\end{equation}
as $n\to\infty$,	 and
\begin{equation}\label{2-4}
\bar{u}_{2n}(x):=-au_{1n}(x)+\sqrt{1-a^2}u_{2n}(x)\to \pm w_*(x-\bar x_n)\ \ \text{strongly\ in}\  L^r(\R)
\end{equation}
as $n\to\infty$,  where $2\leq r<\infty$ is arbitrary,  and $ \bar x_n\in\R$ satisfying $| \bar x_n|\to\infty$ as $n\to\infty$ denotes the maximum point  of $|\bar u_{2n}(x)|$  in $\R$.
\end{lem}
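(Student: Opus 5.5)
The plan is to pass to the rotated pair $(\bar u_{1n},\bar u_{2n})$. This is again an orthonormal pair in $L^2$, and the constraint is invariant under the rotation, so $\mathcal E_{p_n}(\bar u_{1n},\bar u_{2n})=\mathcal E_{p_n}(u_{1n},u_{2n})=J_2(p_n)$ and the density $\bar u_{1n}^2+\bar u_{2n}^2=u_{1n}^2+u_{2n}^2$ is unchanged. By Lemma \ref{lem2.1}, $\bar u_{1n}\rightharpoonup \sqrt{1-a^2}u_1+au_2=w_*$ and $\bar u_{2n}\rightharpoonup -au_1+\sqrt{1-a^2}u_2=0$ weakly in $H^1(\R)$.

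\textbf{Step 1: strong convergence of $\bar u_{1n}$.} Since $\|w_*\|_2^2=\frac14\|w\|_2^2\cdot 4\cdot\frac14=1$ (indeed $\sqrt{1/16}\cdot 4=1$, matching \eqref{2.17}), we get $\|\bar u_{1n}\|_2=1=\|w_*\|_2$. Weak convergence in $L^2$ together with convergence of norms gives strong $L^2$ convergence. Uniform $H^1$ bounds and Gagliardo--Nirenberg then upgrade this to $L^r$ for every $2\le r<\infty$, which proves \eqref{2-5}.

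\textbf{Step 2: the mass of $\bar u_{2n}$ escapes, and re-centering.} Since $\bar u_{2n}\rightharpoonup 0$ with $\|\bar u_{2n}\|_2=1$, vanishing versus non-vanishing must be settled. Suppose $\sup_y\int_{y-1}^{y+1}\bar u_{2n}^2\to 0$. Then Lions' lemma gives $\bar u_{2n}\to0$ in $L^r$ for $r>2$. Using Step 1 and the interpolation bound $|(\alpha+\beta)^{p}-\alpha^{p}|\le C(\alpha^{p-1}\beta+\beta^{p})$, the energy would satisfy
$$J_2(p_n)=\Big[\int|\bar u_{1n}'|^2-\frac1{p_n}\int \bar u_{1n}^{2p_n}\Big]+\int|\bar u_{2n}'|^2+o(1)\ge J_1(p_n)+o(1)\to J_1(2).$$
This contradicts $\lim J_2(p_n)\le 2J_1(2)<J_1(2)$ from the proof of Lemma \ref{lem2.1}. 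Hence there exist $\bar x_n$ with $\liminf\int_{\bar x_n-1}^{\bar x_n+1}\bar u_{2n}^2>0$; it is convenient to take $\bar x_n$ to be the maximum point of $|\bar u_{2n}|$, which works once we show the maximum of $|\bar u_{2n}|$ stays bounded below (by the same vanishing argument with $L^\infty$). Because $\bar u_{2n}\rightharpoonup0$ locally, $|\bar x_n|\to\infty$. Then $\bar u_{2n}(\cdot+\bar x_n)\rightharpoonup v\neq0$, while $\bar u_{1n}(\cdot+\bar x_n)\to0$ strongly, since $\bar u_{1n}\to w_*$ in $L^2$ and $w_*$ is translated to infinity. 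Passing to the limit in the rotated system, whose coefficients converge to $-\frac1{16}$ on the diagonal by \eqref{mu}, gives $-v''-v^3=-\frac1{16}v$.

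\textbf{Step 3: identifying $v$ and concluding.} The main obstacle is that $v$ need not a priori have a sign. Nontrivial $H^1$ solutions of this scalar ODE in one dimension are exactly the translates of $\pm w_*$: the first integral $v'^2=\frac1{16}v^2-\frac12v^4$ forces $v$ to be a single homoclinic orbit with no zeros. Since $\bar x_n$ is a maximum point of $|\bar u_{2n}|$ and convergence is $C^1_{loc}$, the limit is centered at $0$, so $v=\pm w_*$. Then $\|v\|_2=1=\|\bar u_{2n}\|_2$ again yields strong $L^2$, hence $L^r$, convergence of $\bar u_{2n}(\cdot+\bar x_n)$ to $\pm w_*$, which is \eqref{2-4}.
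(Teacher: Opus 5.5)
Your proof is correct and follows the paper's skeleton. Strong $L^2$ convergence of $\bar u_{1n}$ comes from weak convergence plus $\|\bar u_{1n}\|_2=\|w_*\|_2=1$. You then re-center $\bar u_{2n}$ at the maximum point of $|\bar u_{2n}|$, identify the limit of the translated sequence, and upgrade to strong convergence via conservation of the $L^2$ norm. The genuinely different step is how you rule out vanishing of $\bar u_{2n}$, i.e.\ how you prove \eqref{J13}.

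The paper argues spectrally. By orthogonality,
$\int[(\bar u_{2n}')^2-(u_{1n}^2+u_{2n}^2)^{p_n-1}\bar u_{2n}^2]dx=a^2\mu_{1n}+(1-a^2)\mu_{2n}$, which tends to $-\frac1{16}$ by \eqref{mu}. If $\|\bar u_{2n}\|_\infty\to0$, the same quantity would be asymptotically $\int(\bar u_{2n}')^2dx\ge0$, a contradiction.

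You argue at the level of energies instead. Vanishing of $\bar u_{2n}$ kills both the cross term and the $\bar u_{2n}^{2p_n}$ term, so $J_2(p_n)\ge J_1(p_n)+o(1)$. This contradicts the trial-state bound $\lim J_2(p_n)\le 2J_1(2)<J_1(2)$ from Lemma \ref{lem2.1}.

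What each route buys:
\begin{itemize}
\item The paper's route is a one-line consequence of the Euler--Lagrange system once \eqref{mu} is known.
\item Yours is the standard concentration--compactness mechanism. It needs only $\|\bar u_{1n}\|_2=1$, uniform $H^1$ bounds and the energy gap, not Step 1. It therefore shows that no component of any orthonormal rotation of the minimizers can vanish. You still need \eqref{mu} afterwards to pass to the limit in the rotated system.
\end{itemize}

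In Step 3, your first-integral argument replaces the paper's appeal to the classification behind \eqref{2.30a}--\eqref{2.32}. Since $v,v'\to0$ at infinity, $v'^2=\frac1{16}v^2-\frac12v^4$, so any zero of $v$ forces $v\equiv0$ by ODE uniqueness. This is self-contained and settles the sign issue cleanly.

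One small imprecision: $\bar u_{1n}(\cdot+\bar x_n)$ does not tend to $0$ strongly in $L^2(\R)$, since its norm is $1$. It tends to $0$ weakly and locally uniformly, and that is all you need to pass to the limit in the equation.
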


\noindent{\bf Proof. }
Note from Lemma \ref{lem2.1} that
\begin{equation*}
(u_{1n},u_{2n}) \rightharpoonup \big(\sqrt{1-a^2}\,w_\ast, a w_\ast\big)\ \ \text{weakly in}\ H^1(\R) \ \text{as}\ \ n\to\infty.
\end{equation*}
We then get from (\ref{2-5}) that
\begin{equation*}
\bar u_{1n} \rightharpoonup w_\ast \ \ \text{weakly in}\ L^2(\R)\ \ \text{as} \ \ n\to\infty,\ \ \|\bar u_{1n}^2 \|_2 = \|w_\ast^2 \|_2=1.
\end{equation*}
This further implies that
\begin{equation}\label{2.30}
\bar u_{1n}\to w_\ast \quad \text{strongly in } L^2(\mathbb{R})\ \, \text{as} \ \, n\to\infty.
\end{equation}
Using the interpolation inequality and the uniform boundedness of $\{\hat{u}_{1n}\}$ in $H^1(\mathbb{R})$, we then obtain from  \eqref{2.30} that the convergence \eqref{2-5} holds true.

In the following, we prove the   convergence \eqref{2-4}.
We first claim that there exists a constant $\delta >0$, independent of $n>0$, such that
\begin{equation}\label{J13}
\liminf_{n\to\infty} |\bar u_{2n}(\bar{x}_n)| \ge \delta >0 ,
\end{equation}
where $\bar{x}_n$ is a global maximum  point of $|\bar u_{2n}(x)|$ in $\R$.
Indeed, suppose \eqref{J13} is false. It then follows from \eqref{x4} that
\begin{equation*}
\lim\limits_{n\to\infty}\mu_{2n}
= \lim\limits_{n\to\infty}\int_{\mathbb{R}} \Bigl[
(\bar u'_{2n})^2
- (\bar{u}_{1n}^2+\bar u_{2n}^2)^{p_n-1}\bar u_{2n}^2
\Bigr]\,dx=\inte (\bar u'_{2n})^2dx
\ge 0 ,
\end{equation*}
which however contradicts with \eqref{mu}. This proves the claim \eqref{J13}.

Set
\begin{equation*}
\hat u_{in}(x) := \bar u_{in}(x+\bar x_n),\ \ \ i=1, 2.
\end{equation*}
Following Lemma \ref{lem2.1}, we then  derive from \eqref{x4} and \eqref{J13} that  there exist $u_{10}\in H^1(\R)$  and  $u_{20}\in H^1(\R)\backslash\{0\}$ such that up to a subsequence  if necessary,
\begin{equation*}
(\hat u_{1n}, \hat u_{2n}) \rightharpoonup (u_{10},u_{20}) \ \ \text{weakly in}\ H^1(\R)\ \ \hbox{as}\,\ n\to\infty,
\end{equation*}
where the limit $(u_{10},u_{20})$ satisfies
\begin{equation*}
\left\{
\begin{aligned}
- u_{10}'' - (u_{10}^2+u_{20}^2)u_{10}
&= -\frac{1}{16} u_{10}\ \ \mbox{in}\ \, \R, \\
- u_{20}'' - (u_{10}^2+u_{20}^2)u_{20}
&= -\frac{1}{16} u_{20}\ \ \mbox{in}\ \, \R.
\end{aligned}
\right.
\end{equation*}
Since
\[
\bar{u}_{2n} \rightharpoonup0,\ \   \bar{u}_{2n}(x+\bar x_n) \rightharpoonup u_{20}\not \equiv 0  \quad \text{weakly in}\  H^1(\mathbb{R})\ \ \hbox{as}\,\ n\to\infty,
\]
we have
\[
\lim_{n \to \infty} |\bar x_n| = +\infty.
\]

We obtain from (\ref{2.30}) that $u_{10}(x)\equiv 0$ in $\R$, which implies that
\[
-u_{20}'' - u_{20}^3 = -\frac{1}{16} u_{20}\ \ \, \text{in}\ \, \R.
\]
Similar to \eqref{2.30a} and \eqref{2.32},  we thus get that 
\[
u_{20}(x) = \pm w_{*}(x-x_0)\ \ \ \text{in}\ \, \R\ \, \text{for some}\ x_0\in\R.
\]
Moreover, since
\[
\hat u_{2n}'(0)=0\ \  \text{and}\ \  \inte \hat{u}_{2n}^2dx = \inte u_{20}^2dx= \inte w_{*}^2dx =1,
\]
we derive  that $x_0=0$ and
\[
\hat{u}_{2n} \rightarrow \pm w_{*} \quad \text{strongly in}\  L^2(\mathbb{R})\ \ \text{as}\ \ n\to\infty.
\]
By the uniform boundedness of $\{\hat{u}_{2n}\}$ in $H^1(\mathbb{R})$, we further obtain that
\[
\hat{u}_{2n} \rightarrow \pm w_{*} \quad \text{strongly in }  L^r(\mathbb{R})\ \ \text{as}\ \ n\to\infty,\quad\forall\ r\geq2,
\]
$i.e.,$   \eqref{2-4} is proved. This therefore completes the proof of Lemma \ref{thm2.1}.
\qed

Employing Lemmas \ref{lem2.1} and \ref{thm2.1}, we next address the proof of Proposition \ref{J1}.

\vspace{0.05cm}
\noindent{\bf Proof of Proposition \ref{J1}.}  Under the assumptions of Proposition \ref{J1}, the limit (\ref{mun}) follows directly from Lemma \ref{lem2.1}.

In order to prove (\ref{J2}), let $(u_{1n},u_{2n})$ be a minimizer of $J_2(p_n)$ satisfying \eqref{x4}, where $p_n\in[3/2, 5/2]$ satisfies $p_n\to2$ as $n\to\infty$. It then yields from Lemma \ref{thm2.1} that there exist a constant $a_0 \in[-1,1]$ and a sequence $\{\bar{x}_n\}$ satisfying $|\bar{x}_n|\to\infty$ as $n\to\infty$ such that  
\begin{align}\label{2.34}
	&\text{either}\quad  (u_{1n},-u_{2n})\nonumber\\
	&\qquad\quad\,\to \big(\sqrt{1-a_0 ^2}w_{*}(x)+a_0 w_{*}(x-\bar x_n),\
	-a_0 w_{*}(x)+\sqrt{1-a_0 ^2}w_{*}(x-\bar x_n)\big),\\
	&	\text{or}\quad  (u_{1n},u_{2n})\nonumber\\ &\quad\ \,\to\big(\sqrt{1-a_0 ^2}w_{*}(x)-a_0 w_{*}(x-\bar x_n),\ a_0 w_{*}(x)+\sqrt{1-a_0 ^2}w_{*}(x-\bar x_n)\big)\label{2.35}
\end{align}
strongly in $L^{r}(\R)$ as $n\to\infty$, where $r\in[2, \infty)$ is arbitrary. Since $u_{1n}(x)>0$ in $\R$,  $a_0 \in[0,1]$ holds for \eqref{2.34} and $-a_0 \in[0,1]$ holds for \eqref{2.35}. Hence,
there exist a constant $a_0 \in[0,1]$ and a sequence $\{\bar{x}_n\}$ satisfying $\bar{x}_n\to\infty$ as $n\to\infty$ such that  up  to reflections (across the y-axis)  if necessary,
\begin{equation}\label{J4}
	(u_{1n},\, \pm u_{2n})
\to \big(\sqrt{1-a_0 ^2}w_{*}(x)+a_0 w_{*}(x-\bar x_n),\
	-a_0 w_{*}(x)+\sqrt{1-a_0 ^2}w_{*}(x-\bar x_n)\big)
\end{equation}
strongly in $L^{r}(\R)$ as $n\to\infty$, where $r\in[2, \infty)$ is arbitrary. Moreover, if $a_0 \in(\sqrt{2}/2, 1]$, then
\begin{equation}\label{2.37}
\begin{split}
	&\big(u_{1n}(-x+\bar x_n), \mp u_{2n}(-x+\bar x_n)\big)\\
	\to &\big(a_0 w_{*}(x)+\sqrt{1-a_0 ^2}w_{*}(x-\bar x_n),\
	-\sqrt{1-a_0 ^2}w_{*}(x)+ a_0w_{*}(x-\bar x_n)\big)
\end{split}
\end{equation}
strongly in $L^{r}(\R)$ as $n\to\infty$, where $r\in[2, \infty)$ is arbitrary. As a consequence of \eqref{J4} and \eqref{2.37},  we conclude there exist a constant $a_0 \in[0, \sqrt{2}/2]$ and a sequence $\{\bar{x}_n\}\subset \R$ satisfying $\bar x_n\to\infty$ as $n\to\infty$ such that up to translations and reflections if necessary,
\begin{equation}\label{J5}
	(u_{1n}, \pm u_{2n})\to \big(\sqrt{1-a_0 ^2}w_{*}(x)+a_0 w_{*}(x-\bar x_n),\
	-a_0 w_{*}(x)+\sqrt{1-a_0 ^2}w_{*}(x-\bar x_n)\big)
\end{equation}
strongly in $L^{r}(\R)$ as $n\to\infty$, where $r\in[2, \infty)$ is arbitrary.

Applying the  standard elliptic regularity theory, we obtain from \eqref{x4} that the sequence $\{(u_{1n},u_{2n})\}$ is uniformly bounded in $C^{1,\alpha}(\R)$ for some $\alpha\in(0,1)$.
We hence deduce from  the Arzel\`a--Ascoli theorem that  there exists a subsequence, still denoted by $\{(u_{1n},u_{2n})\}$, of $\{(u_{1n},u_{2n})\}$  such that for any fixed $R>0$, 
\begin{equation}\label{2.46}
	(u_{1n},\pm u_{2n})\to
	\big(\sqrt{1-a_0 ^2}w_{*}(x)-a_0 w_{*}(x-\bar x_n),
	-a_0 w_{*}(x)+\sqrt{1-a_0 ^2}w_{*}(x-\bar x_n)\big)
\end{equation}
strongly in $L^\infty\big(B_R(0)\cup B_R(\bar x_n)\big)$ as $n\to\infty$, where $a_0 \in[0, \sqrt{2}/2]$ and the sequence $\{\bar{x}_n\}\subset \R$ satisfying $\bar x_n\to\infty$ as $n\to\infty$ are as in (\ref{J5}).
On the other hand,  by the interpolation inequality
\[
\|f\|^2_{L^\infty[\alpha,\, \beta]}
\le 2\|f\|_{L^2[\alpha, \,\beta]}\|f'\|_{L^2[\alpha,\, \beta]}
+\frac{2}{|\alpha-\beta|}\|f\|^2_{L^2[\alpha,\, \beta]},
\]
we deduce from \eqref{J5} that for any $\varepsilon>0$, there exists a sufficiently large constant $R:=R(\varepsilon)>0$, which is independent of $n>0$, such that   for any  sufficiently large $n>0$,
\begin{equation}\label{2.47}
	\|u_{in}\|_{L^\infty\big(\R\setminus(B_R(0)\cup B_R(\bar x_n))\big)}< \varepsilon/2,\ \ i=1,2.
\end{equation}
As a consequence of \eqref{J5}--\eqref{2.47}, we obtain that  (\ref{J2}) holds true. This completes the proof of Proposition \ref{J1}. \qed

\section{Limiting Behavior of  Minimizers as $p\to 2$}
The main purpose of this section is to analyze the limiting profiles of minimizers for $J_2(p_n)$, where $p_n\in[3/2, 5/2]$ satisfies $p_n\to2$ as $n\to\infty$. Towards this purpose, let $(\bar u_{1n}, \bar u_{2n})$ be a minimizer of $J_2(p_n)$, where $p_n\in[3/2, 5/2]$ satisfies $p_n\to2$ as $n\to\infty$. Without loss of generality, applying Proposition \ref{J1}, we can suppose that
\begin{align}\label{J2a}
(\bar u_{1n}, \bar u_{2n})\to \big(\sqrt{1-a_0^2}w_{*}(x)+a_0w_{*}(x-\bar x_n),\ -a_0w_{*}(x)+\sqrt{1-a_0^2}w_{*}(x-\bar x_n)\big)
\end{align}
strongly in $L^{r}(\R)$ as $n\to\infty$, where $r\in[2, \infty]$ is arbitrary, $a_0\in[0,\sqrt{2}/2]$ and $\bar x_n\geq0$ satisfying $\lim_{n\to\infty}\bar x_n=\infty$ are as in Proposition \ref{J1}.
Let $y_n\in\R$ be a global maximum point of $\bar u_{1n}>0$ in $\R$. We then obtain from (\ref{J2a}) that $\lim\limits_{n\to\infty}y_n=0$, and
\begin{equation}\label{2.39}
\begin{split}
&\big(u_{1n}(x),  u_{2n}(x)\big):=\big(\bar u_{1n}(x+y_n),\bar u_{2n}(x+y_n)\big)\\
\to& \big(\sqrt{1-a_0^2}w_{*}(x)+a_0w_{*}(x-\bar x_n),\
-a_0w_{*}(x)+\sqrt{1-a_0^2}w_{*}(x-\bar x_n)\big)
\end{split}
\end{equation}
strongly in $L^{r}(\R)$ as $n\to\infty$, where $r\in[2, \infty]$ is arbitrary, $a_0\in[0, \sqrt{2}/2]$ and $\bar x_n\in\R$ satisfying $\lim\limits_{n\to\infty}\bar x_n=\infty$ are as in Proposition \ref{J1}. We further suppose that $x_n\in\R$ is a global maximum point of $u_{2n}$ defined by (\ref{2.39}). We then deduce from \eqref{2.39} that $\lim\limits_{n\to\infty}(x_n-\bar x_n)=0$, which further yields that
\begin{align}\label{3.2}
  u_{1n}(0)=\max\limits_{x\in\R}  u_{1n}(x)>0, \ \   u_{2n}(x_n)=\max\limits_{x\in\R} u_{2n}(x),\ \   \lim\limits_{n\to\infty}x_n=\infty,
\end{align}
 and
\begin{align}\label{2.40}
&(u_{1n},   u_{2n})
\to \big(\sqrt{1-a_0^2}w_{*}(x)+a_0w_{*}(x- x_n),\
-a_0w_{*}(x)+\sqrt{1-a_0^2}w_{*}(x- x_n)\big)
\end{align}
strongly in $L^{r}(\R)$ as $n\to\infty$, where $a_0\in[0, \sqrt{2}/2]$, and $r\in[2, \infty]$ is arbitrary. In this section, we mainly analyze  in Lemma \ref{a} the limiting behavior of $(u_{1n}, u_{2n})$ as $n\to\infty$.

Denote
\begin{align}\label{w}
\widetilde w_n(x):=\Big(\frac{2\sqrt{p_n}}{e^{(p_n-1)x}
+e^{(1-p_n)x}}\Big)^{\frac{1}{p_n-1}}>0,
\end{align}
so that  $\widetilde w_n$ is the unique (up to translations) positive solution of the following equation
\begin{equation*}\label{4.4}
\widetilde w_n''- \widetilde w_n+\widetilde w_n^{2p_n-1}=0 \ \  \mathrm{in} \,\ \R,\ \  1<p_n<3.
\end{equation*}
This yields that for $i=1, 2,$ if $\widetilde w_{in}(x)>0$ is a unique positive solution of
\begin{equation}\label{AA3.1}
\widetilde w''_{in}+\mu_{in}\widetilde w_{in}+\widetilde w_{in}^{2p_n-1}=0\ \ \mathrm{in} \,\ \R,
\end{equation}
where $\mu_{1n}<\mu_{2n}<0$ and $3/2\leq p_n\leq5/2$ are as in Proposition \ref{J1}, then up to translations,
\begin{equation}\label{win}
\widetilde w_{in}(x):=|\mu_{in}|^{\frac{1}{2(p_n-1)}}\widetilde w_n\big(\sqrt{|\mu_{in}|}x\big)
=\Big(\frac{2\sqrt{|\mu_{in}|p_n}}{e^{(p_n-1)\sqrt{|\mu_{in}|}x}
+e^{(1-p_n)\sqrt{|\mu_{in}|}x}}\Big)^{\frac{1}{p_n-1}}>0.
\end{equation}
Moreover, for any $n\in\mathbb{N}^+$, define $  F_n(a, b, \delta, \eta)\in C^1\big( [-4/5, \, 4/5]^4,\,  \R\big)$ by
\begin{equation*}\label{f}
\begin{split}
F_n(a, b, \delta, \eta):= \inte \big[u_{1n}(x)-\sqrt{1-b^2}\widetilde w_{1n}(x-\delta)-a\widetilde w_{2n}(x- x_n+\eta)\big]^2dx\\
+\inte\big[ u_{2n}(x)-b\widetilde w_{1n}(x-\delta)-\sqrt{1-a^2}\widetilde w_{2n}(x- x_n+\eta)\big]^2 dx>0,
\end{split}
\end{equation*}
where $(u_{1n}, u_{2n})$ and $x_n\in\R$ are as in \eqref{2.40}.
It then yields from \eqref{2.40} that for any sufficiently large $n>0$, the minimum  of $F_n(\cdot )$ is attained at $(a_n, b_n, \delta_n, \eta_n)\in(-4/5, \, 4/5)^4$ satisfying
\begin{align}\label{3.5}
  \lim\limits_{n\to\infty}(a_n, b_n, \delta_n, \eta_n)=(a_0, -a_0, 0, 0),
\end{align}
where $a_0\in[0, \sqrt{2}/2]$ is as in \eqref{2.40}.

Set
\begin{eqnarray}\label{4.10a}
	\left\{
	\begin{array}{lll}
		\!	\!	\!	 u_{1n}(x)=\sqrt{1-b^2_n}\widetilde w_{1n}(x-\delta_n)+a_n\widetilde w_{2n}(x- x_n+\eta_n)+ \phi_n(x), \\[1mm]
		\!	\!	\!	 u_{2n}(x)=b_n\widetilde w_{1n}(x-\delta_n)+\sqrt{1-a_n^2}\widetilde w_{2n}(x- x_n+\eta_n)+ \psi_n(x),
	\end{array}
	\right.
\end{eqnarray}
where $(a_n, b_n, \delta_n, \eta_n)\in(-4/5, \, 4/5)^4$ is as in \eqref{3.5}, 
and  $ x_n$ satisfying $\lim _{n\to\infty}x_n=\infty$ is as in \eqref{2.40}. By the definition of $(a_n, b_n, \delta_n, \eta_n)$, it then follows from \eqref{2.40} and \eqref{3.5} that
\begin{eqnarray}\label{2.42}
	\phi_n, \ \psi_n\to0\ \  \text{strongly in}\ L^2(\R)\cap L^\infty(\R)\ \  \text{as}\ \ n\to\infty,
\end{eqnarray}
\begin{eqnarray}\label{3.7}
\left\{
\begin{array}{lll}
\!\!\!0=-\sqrt{1-b_n^2}\,\displaystyle\frac{\partial F_n}{\partial b}\Big|_{(a_n, b_n, \delta_n, \eta_n)}\\[2mm]
\ =2\displaystyle\inte \big[-b_n \phi_n+\sqrt{1-b_n^2}\psi_n\big]\widetilde w_{1n}(x-\delta_n)dx,\\[4mm]
\!\!\! 0=\displaystyle\frac{\partial F_n}{\partial \delta}\Big|_{(a_n, b_n, \delta_n, \eta_n)}=2\displaystyle\inte \big[\sqrt{1-b_n^2} \phi_n+b_n\psi_n\big]\widetilde w'_{1n}(x-\delta_n)dx,
\end{array}
\right.
\end{eqnarray}
and
\begin{eqnarray}\label{3.8}
\left\{
\begin{array}{lll}
\!\!\!0=-\sqrt{1-a_n^2}\,\displaystyle\frac{\partial F_n}{\partial a}\Big|_{(a_n, b_n, \delta_n, \eta_n)}\\[2mm]
\ =2\displaystyle\inte \big[\sqrt{1-a_n^2} \phi_n-a_n\psi_n\big]\widetilde w_{2n}(x- x_n+\eta_n)dx,\\[4mm]
\!\!\!0=-\displaystyle\frac{\partial F_n}{\partial \eta}\Big|_{(a_n, b_n, \delta_n, \eta_n)}=2\displaystyle\inte \big[a_n \phi_n+\sqrt{1-a_n^2}\psi_n\big]\widetilde w'_{2n}(x- x_n+\eta_n)dx.
\end{array}
\right.
\end{eqnarray}
Denote
\begin{eqnarray}\label{4.6}
\left\{
\begin{array}{lll}
\!	\!	\!	 \hat u_{1n}(x):=\sqrt{1-b^2_n} u_{1n}(x)+b_n  u_{2n}(x), \\[1mm]
\!	\!	\!	 \hat u_{2n}(x):=a_n u_{1n}(x)+\sqrt{1-a_n^2} u_{2n}(x),
\end{array}
\right.
\end{eqnarray}
which then yields from \eqref{4.10a} that
\begin{eqnarray}\label{4.10}
\left\{
\begin{array}{lll}
\!	\!	\!	 \hat u_{1n}(x)= \widetilde w_{1n}(x-\delta_n)+K_n \widetilde w_{2n}(x-x_n+\eta_n)+\hat \phi_n(x), \\[1mm]
\!	\!	\!	 \hat u_{2n}(x)= K_n \widetilde w_{1n}(x-\delta_n)+\widetilde w_{2n}(x- x_n+\eta_n)+\hat \psi_n(x),
\end{array}
\right.
\end{eqnarray}
where  $K_n:=a_n\sqrt{1-b^2_n}+b_n\sqrt{1-a^2_n}$, and
\begin{eqnarray}\label{AA3.2}
	\left\{
	\begin{array}{lll}
		\!	\!	\!	 \hat  \phi_n(x):=\sqrt{1-b^2_n} \phi_n(x)+b_n \psi_n(x), \\[1mm]
		\!	\!	\!	   \hat \psi_n(x):=a_n \phi_n(x)+\sqrt{1-a^2_n}\psi_n(x).
	\end{array}
	\right.
\end{eqnarray}
We thus conclude from \eqref{3.5}, \eqref{2.42} and \eqref{4.10} that
\begin{equation}\label{4.8a}
\big(\hat u_{1n},\ \hat u_{2n}(x+ x_n)\big)\to (w_*,\, w_*)\ \ \text{strongly in}\   L^2(\R)\cap L^\infty(\R) \ \, \text{as}\ \, n\to\infty.
\end{equation}

We now denote
\begin{equation}\label{3.16a}
w_{1n}(x):=\widetilde w_{1n}(x-\delta_n),\ \ w_{2n}(x):=\widetilde w_{2n}(x- x_n+\eta_n).
\end{equation}
Note from  \eqref{x4} and \eqref{4.6} that
\begin{eqnarray}\label{x1}
	\left\{
	\begin{array}{lll}
		\!\!\!-\hat u''_{1n}-\big( u_{1n}^2+ u_{2n}^2\big)^{p_n-1}\, \hat u_{1n}=\mu_{1n}\hat u_{1n}+b_nu_{2n}(\mu_{2n}-\mu_{1n})& \mathrm{in} \,\ \R,\\[1.5mm]
		\!\!\!-\hat u''_{2n}-\big( u_{1n}^2+u_{2n}^2\big)^{p_n-1}\, \hat u_{2n}=\mu_{2n}\hat u_{2n}  -a_nu_{1n}(\mu_{2n}-\mu_{1n})& \mathrm{in} \,\ \R.
	\end{array}
	\right.
\end{eqnarray}
We derive from  \eqref{AA3.1}, \eqref{4.10} and \eqref{x1} that
\begin{equation}\label{AA3.5}
	\begin{aligned}
&-\hat \phi''_n-\mu_{1n}\hat \phi_n-({u}_{1n}^2+{u}_{2n}^2)^{p_n-1}\hat \phi_n\\
=&w_{1n}\big[({u}_{1n}^2+{u}_{2n}^2)^{p_n-1}-w_{1n}^{2(p_n-1)}\big] +K_nw_{2n}\big[({u}_{1n}^2+{u}_{2n}^2)^{p_n-1}-w_{2n}^{2(p_n-1)}\big]\\
&-b_n^2(\mu_{1n}-\mu_{2n})w_{1n}+a_n\sqrt{1-b^2_n}(\mu_{1n}-\mu_{2n})w_{2n}-b_n(\mu_{1n}-\mu_{2n})\psi_{n},
\end{aligned}
\end{equation}
and
\begin{equation}\label{AA3.6}
\begin{aligned}
&-\hat \psi''_n-\mu_{2n}\hat \psi_n-({u}_{1n}^2+{u}_{2n}^2)^{p_n-1}\hat \psi_n\\
=&w_{2n}\big[({u}_{1n}^2+{u}_{2n}^2)^{p_n-1}-w_{2n}^{2(p_n-1)}\big]+K_nw_{1n}\big[({u}_{1n}^2+{u}_{2n}^2)^{p_n-1}-w_{1n}^{2(p_n-1)}\big]\\
&-b_n\sqrt{1-a^2_n}(\mu_{1n}-\mu_{2n})w_{1n}+a^2_n(\mu_{1n}-\mu_{2n})w_{2n}+a_n(\mu_{1n}-\mu_{2n})\phi_{n},
\end{aligned}
\end{equation}
where
\begin{align}\label{4.27b}
K_n:=\big(a_n\sqrt{1-b^2_n}+b_n\sqrt{1-a^2_n}\big),
\end{align}
and
\begin{align}\label{4.27a}
&\ \ \ \ ({u}_{1n}^2+{u}_{2n}^2)\nonumber\\
&=w_{1n}^2+w_{2n}^2+2\big(\sqrt{1-b^2_n}w_{1n}+a_nw_{2n}\big) \phi_n+2\big(b_nw_{1n}+\sqrt{1-a^2_n}w_{2n}\big) \psi_n\nonumber\\
&\ \ \ \ +2\big(\sqrt{1-b^2_n}\, a_n+\sqrt{1-a^2_n}\, b_n\big)w_{1n}w_{2n}+ \phi_n^2+\psi_n^2\\
&=w_{1n}^2+w_{2n}^2+2w_{1n}\hat \phi_n+2w_{2n}\hat \psi_n+2K_nw_{1n}w_{2n}+ (\phi_n^2+\psi_n^2).\nonumber
\end{align}
We start with the following $L^\infty$-estimates.

\begin{lem}\label{O2}
Denote
\begin{eqnarray}\label{O1}
\left\{
\begin{array}{lll}
\!	\!	\!
F_{1n}:=w_{1n}\big[({u}^2_{1n}+{u}^2_{2n})^{p_n-1}-w_{1n}^{2(p_n-1)}\big]-2(p_n-1)w_{1n}^{2p_n-2}\hat\phi_n,\\[2mm]
\!	\!	\! F_{2n}:=w_{2n}\big[({u}^2_{1n}+{u}^2_{2n})^{p_n-1}-w_{2n}^{2(p_n-1)}\big]-2(p_n-1)w_{2n}^{2p_n-2}\hat\psi_n.
\end{array}
\right.
\end{eqnarray}
Then we have for $i=1, 2,$
\begin{equation}\label{O3a}
\begin{split}
\|F_{in}\|_{\infty}
\leq C\Big( e^{-\sqrt{|\mu_{in}|}\, x_n}+|K_n|e^{-\sqrt{|\mu_{2n}|}\, x_n}\Big)
+o\big(	\|\hat\phi_n\|_{\infty}+\|\hat\psi_n\|_{\infty}\big)
\ \ \text{as}\ \ n\to\infty,
\end{split}\end{equation}
where $K_n$ is as in (\ref{4.27b}), and the constant $C>0$ is independent of $n>0$.
\end{lem}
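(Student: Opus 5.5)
We estimate $F_{1n}$ via a Taylor expansion of the map $s\mapsto s^{p_n-1}$ around $w_{1n}^2$, using the decomposition (\ref{4.27a}):
\[
u_{1n}^2+u_{2n}^2=w_{1n}^2+R_n,\qquad R_n:=w_{2n}^2+2w_{1n}\hat\phi_n+2w_{2n}\hat\psi_n+2K_nw_{1n}w_{2n}+\phi_n^2+\psi_n^2 .
\]
The same argument gives $F_{2n}$ after exchanging the roles of $w_{1n}$ and $w_{2n}$, and of $\hat\phi_n$ and $\hat\psi_n$.

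By (\ref{2.42}), (\ref{3.5}) and the explicit form (\ref{win}), we have $\|R_n\|_\infty\to0$ away from the bump at $x_n$. We therefore split $\R$ into the two half-lines $I_1:=\{x\le x_n/2\}$ and $I_2:=\{x> x_n/2\}$.

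On $I_1$ we use the mean value theorem:
\[
(w_{1n}^2+R_n)^{p_n-1}-w_{1n}^{2(p_n-1)}=(p_n-1)\,\xi^{p_n-2}R_n
\]
for some $\xi$ between $w_{1n}^2$ and $w_{1n}^2+R_n$. Multiplying by $w_{1n}$ and subtracting $2(p_n-1)w_{1n}^{2p_n-2}\hat\phi_n$, the linear term $2w_{1n}\hat\phi_n$ in $R_n$ cancels up to the error $(p_n-1)w_{1n}(\xi^{p_n-2}-w_{1n}^{2(p_n-2)})2w_{1n}\hat\phi_n$. Since $p_n-2\to0$ and $p_n\in[3/2,5/2]$, the quantity $\xi^{p_n-2}w_{1n}^2$ is comparable to $w_{1n}^{2p_n-2}$, and this error is $o(\|\hat\phi_n\|_\infty)$. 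The remaining terms are handled as follows.
\begin{itemize}
\item The terms $w_{1n}w_{2n}^2\xi^{p_n-2}$ and $K_nw_{1n}^2w_{2n}\xi^{p_n-2}$ are controlled by the decay $w_{in}(x)\le Ce^{-\sqrt{|\mu_{in}|}|x-\cdot|}$, which follows from (\ref{win}) since the exponent $\frac1{p_n-1}\cdot(p_n-1)=1$. Their products over the overlap region give the bounds $Ce^{-\sqrt{|\mu_{2n}|}x_n}\le Ce^{-\sqrt{|\mu_{1n}|}x_n}$ (recall $|\mu_{1n}|>|\mu_{2n}|$ and that both tend to $1/16$) and $C|K_n|e^{-\sqrt{|\mu_{2n}|}x_n}$ respectively.
\item The term $w_{1n}w_{2n}\hat\psi_n$ is $o(\|\hat\psi_n\|_\infty)$, since $w_{2n}\to0$ uniformly on $I_1$.
\item The quadratic terms $\phi_n^2+\psi_n^2$ are $o(\|\hat\phi_n\|_\infty+\|\hat\psi_n\|_\infty)$, because $\phi_n,\psi_n\to0$ and $(\phi_n,\psi_n)$ is an invertible linear image of $(\hat\phi_n,\hat\psi_n)$ by (\ref{AA3.2}), with $a_n,b_n$ bounded away from $\pm1$.
\end{itemize}

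\textbf{Main obstacle.} The delicate point is the region where $w_{1n}$ is small, namely near $x_n$ and at $\pm\infty$, because $\xi^{p_n-2}$ may be singular there when $p_n<2$. On $I_2$ we do not expand; we bound each of the two terms directly. Since $p_n-1\ge1/2>0$, we have
\[
w_{1n}\big|(u_{1n}^2+u_{2n}^2)^{p_n-1}-w_{1n}^{2(p_n-1)}\big|\le Cw_{1n}\le Ce^{-\sqrt{|\mu_{1n}|}x_n/2}.
\]
This is too weak, so on $I_2$ we refine by noting $w_{1n}\,(u_{1n}^2+u_{2n}^2)^{p_n-1}\le Cw_{1n}w_{2n}^{2(p_n-1)}+\dots$ and using $w_{1n}(x)w_{2n}(x)^{\theta}\le Ce^{-\sqrt{|\mu_{1n}|}x_n}$ for $\theta$ near $1$ (by (\ref{3.5}) and the matching decay rates). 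We also use $w_{1n}^{2p_n-1}\le Ce^{-\sqrt{|\mu_{1n}|}x}$ with $2p_n-1\ge 2$, which gives a factor $e^{-2\sqrt{|\mu_{1n}|}x_n/2}$ on $I_2$.

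The $\hat\phi_n$ term is again $o(\|\hat\phi_n\|_\infty)$ there, because $w_{1n}^{2p_n-2}\to0$ on $I_2$. Near $\pm\infty$, where $\xi$ can degenerate, the same direct bound works since $w_{1n}$ there is already small relative to $|R_n|$. Collecting the estimates on $I_1$ and $I_2$ yields (\ref{O3a}).
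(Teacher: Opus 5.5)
Your plan matches the paper's: expand $s\mapsto s^{p_n-1}$ about $w_{1n}^2$ where $w_{1n}$ dominates, and bound crudely elsewhere. Two steps, however, fail as written.

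First, the exponential bookkeeping for $i=1$ is wrong. Since $\mu_{1n}<\mu_{2n}<0$, we have $\sqrt{|\mu_{1n}|}>\sqrt{|\mu_{2n}|}$. Your inequality $Ce^{-\sqrt{|\mu_{2n}|}x_n}\le Ce^{-\sqrt{|\mu_{1n}|}x_n}$ therefore goes the wrong way, and ``both tend to $1/16$'' does not repair it. You would need $(\sqrt{|\mu_{1n}|}-\sqrt{|\mu_{2n}|})x_n=O(1)$. That is only obtained later, in \eqref{4.63} of Lemma \ref{a}, and Lemma \ref{a} rests on Lemma \ref{NL1}, hence on the present lemma, so using it here is circular. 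The rate $e^{-\sqrt{|\mu_{1n}|}x_n}$ comes instead from the superlinear powers in the interaction terms. Near the origin the term is $w_{1n}^{2p_n-3}w_{2n}^2$. On $[-x_n/2,x_n/2]$ it is at most $Ce^{-\frac{2p_n-1}{2}\sqrt{|\mu_{2n}|}x_n}\le Ce^{-\frac54\sqrt{|\mu_{2n}|}x_n}$, and $\frac54\sqrt{|\mu_{2n}|}>\sqrt{|\mu_{1n}|}$ for large $n$ because both roots tend to $\frac14$. The same care is needed on $I_2$: the bound $w_{1n}w_{2n}^{\theta}\le Ce^{-\sqrt{|\mu_{1n}|}x_n}$ on $[x_n/2,\infty)$ requires $\theta\sqrt{|\mu_{2n}|}\ge\sqrt{|\mu_{1n}|}$. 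For $\theta\le 1$ the exponent $-\sqrt{|\mu_{1n}|}x-\theta\sqrt{|\mu_{2n}|}(x_n-x)$ is decreasing on $[x_n/2,x_n]$. Its value at $x_n/2$ then exceeds $-\sqrt{|\mu_{1n}|}x_n$ by $(\sqrt{|\mu_{1n}|}-\theta\sqrt{|\mu_{2n}|})x_n/2$, which is not known to be bounded. With the actual exponent $\theta=2(p_n-1)\to 2$, the exponent is increasing and its maximum sits at $x=x_n$. This is how the paper gets $e^{-\sqrt{|\mu_{1n}|}x_n}$ on $[x_n/2,3x_n/2]$.

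Second, the mean value step on $I_1$ needs $\xi$ comparable to $w_{1n}^2$, i.e., $|R_n|\le c\,w_{1n}^2$ with $c<1$. Only then does $(\xi/w_{1n}^2)^{p_n-2}\to 1$ uniformly; otherwise, for $p_n<2$, the factor $\xi^{p_n-2}$ is uncontrolled. This bad set is not confined to ``near $\pm\infty$''. Away from a bounded neighbourhood of $0$, $w_{1n}$ is exponentially small, while $\hat\phi_n,\phi_n,\psi_n$ are only known to be $o(1)$ in $L^\infty$. So $|R_n|\ge c\,w_{1n}^2$ can hold on most of $I_1$. The split must be pointwise, as with the paper's set $\Omega_{1n}=\{|h_{2n}|\le w_{1n}^2/2\}$ (note $h_{2n}=R_n$). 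On $\Omega_{1n}$ your expansion goes through. On its complement, $w_{1n}\le\sqrt{2|R_n|}$ gives $|F_{1n}|\le C\big(|R_n|^{p_n-\frac12}+|R_n|^{p_n-1}|\hat\phi_n|\big)$. This fits into \eqref{O3a} because $p_n-\frac12>1$, $R_n\to 0$ uniformly on $I_1$, and $w_{2n}^{2p_n-1}=O(e^{-\frac54\sqrt{|\mu_{2n}|}x_n})$ there. Your remark that $w_{1n}$ is small relative to $|R_n|$ is the right mechanism; it just has to define the region rather than be attached to $\pm\infty$.
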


\noindent \textbf{Proof.} By the definition of $w_{in}(x)$, we note from \eqref{2.42} that
$$
\|\hat\phi_n\|_\infty=o(1),\ \ \ \|\hat\psi_n\|_\infty=o(1)\ \ \ \text{as}\ \ n\to\infty,
$$
and
\begin{eqnarray}\label{e}
\left\{
\begin{array}{lll}
\!	\!	\!
C_1e^{-\sqrt{|\mu_{1n}|}|x|}\leq w_{1n}(x)\leq C_2e^{-\sqrt{|\mu_{1n}|}|x|}\ \ \ \text{in}\ \, \R,\\[1.5mm]
\!	\!	\!
C_1e^{-\sqrt{|\mu_{2n}|}|x-x_n|}\leq w_{2n}(x)\leq C_2e^{-\sqrt{|\mu_{2n}|}|x-x_n|}\ \ \ \text{in}\ \, \R.
\end{array}
\right.
\end{eqnarray}
where $ 0<C_1<C_2$ are independent of $n>0$,  $\mu_{1n}<\mu_{2n}<0$ satisfy $\lim\limits_{n\to\infty}\mu_{1n}=\lim\limits_{n\to\infty}\mu_{2n}=-1/16$.
We then calculate from \eqref{4.27a} that
\begin{equation}\label{4.32b}
\begin{split}
|F_{1n}(x)|+&|F_{2n}(x)|=o\big( e^{-\sqrt{|\mu_{2n}|}x_n}+\|\hat \phi_n\|_\infty+\|\hat\psi_n\|_\infty\big)e^{-\frac{1}{4}\sqrt{|\mu_{2n}|}x_n}\\
 &\mbox{uniformly in} \ \ \big(-\infty,\, -\frac{x_n}{2}\big]\cup\big[\frac{3x_n}{2},\, \infty\big)
 \ \ \mbox{as}\ \ n\to\infty, \end{split}
\end{equation}
\begin{align}\label{4.33}
|F_{1n}(x)|
=&\big|w_{1n}(x)\big[\big(w^2_{2n}(x)+h_{1n}(x)\big)^{p_n-1}-w_{1n}^{2(p_n-1)}(x)\big]-2(p_n-1)w_{1n}^{2p_n-2}(x)\hat\phi_n(x)\big|\nonumber\\
\leq&2^{p_n-1}w_{1n}(x)w_{2n}^{2(p_n-1)}(x)+2^{p_n-1}w_{1n}(x)h_{1n}^{p_n-1}(x)\\
&+o\big( e^{-\sqrt{|\mu_{2n}|}x_n}+\|\hat\phi_n\|_\infty\big)e^{-\frac{1}{4}\sqrt{|\mu_{2n}|}x_n}\nonumber\\
\leq& Ce^{-\sqrt{|\mu_{1n}|}x_n}+o\big(\|\hat\phi_n\|_\infty+\|\hat\psi_n\|_\infty\big)\ \  \text{uniformly in} \ \   \big[\frac{x_n}{2}, \frac{3x_n}{2}\big] \ \ \text{as}\ \ n\to\infty,\nonumber
\end{align}
and
\begin{align}\label{4.34}
|F_{2n}(x)|
=&\Big|w_{2n}(x)\big[\big(w^2_{1n}(x)+h_{2n}(x)\big)^{p_n-1}-w_{2n}^{2(p_n-1)}(x)\big]-2(p_n-1)w_{2n}^{2p_n-2}(x)\hat\psi_n(x)\Big|\nonumber\\
\leq& Ce^{-\sqrt{|\mu_{2n}|}x_n}+o\big(\|\hat\phi_n\|_\infty+\|\hat\psi_n\|_\infty\big)\ \ \ \text{uniformly in} \ \   [-\frac{x_n}{2}, \frac{x_n}{2}]
\end{align}
as $n\to\infty$, where
\begin{align}\label{4.35}
h_{1n}(x):=w_{1n}^2+2w_{1n}\hat\phi_n+2w_{2n}\hat\psi_n+2K_nw_{1n}w_{2n}+\phi_n^2+\psi_n^2=o(1)
\end{align}
uniformly in $\big[\frac{x_n}{2}, \frac{3x_n}{2}\big]$  as $n\to\infty$, and
\begin{align}\label{4.35a}
h_{2n}(x):=w_{2n}^2+2w_{1n}\hat\phi_n+2w_{2n}\hat\psi_n+2K_nw_{1n}w_{2n}+\phi_n^2+\psi_n^2=o(1)
\end{align}
uniformly in $\big[-\frac{x_n}{2}, \frac{x_n}{2}\big] $  as $n\to\infty$.

Moreover, denote
$$
\Omega_{1n}:=\Big\{x\in \big[-\frac{x_n}{2}, \frac{x_n}{2}\big] : \ |h_{2n}(x)|/w_{1n}^2(x)\leq1/2\Big\}, \ \   n\in\mathbb{N}^+.$$
One can verify that
\begin{align*}
|F_{1n}(x)|
=&\Big|w_{1n}(x)\big[\big(w^2_{1n}(x)+h_{2n}(x)\big)^{p_n-1}-w_{1n}^{2(p_n-1)}(x)\big]-2(p_n-1)w_{1n}^{2p_n-2}(x)\hat\phi_n(x)\Big|\nonumber\\
\leq& C\Big[|h_{2n}^{p_n-\frac{1}{2}}(x)|+|h_{2n}^{p_n-1}(x)|\, |\hat\phi_n(x)|\Big],\ \ \ \forall\ x\in \big[-\frac{x_n}{2}, \frac{x_n}{2}\big] \backslash\Omega_{1n},
\end{align*}
and
\begin{align*}
&|F_{1n}(x)|\nonumber\\
=&\Big|(p_n-1)w_{1n}^{2p_n-3}h_{2n}\Big[1+\frac{p_n-2}{2}\big(w^2_{1n}+\theta_nh_{2n}\big)^{p_n-3}h_{2n}w_{1n}^{-2p_n+4}\Big]-2(p_n-1)w_{1n}^{2p_n-2}\hat\phi_n\Big|\nonumber\\
=&\Big|(p_n-1)w_{1n}^{2p_n-3}h_{2n}\Big[1+\frac{p_n-2}{2}\frac{1}{\big(1+\frac{\theta_{1n}h_{2n}}{w_{1n}^2})^{3-p_n}}\frac{h_{2n}}{w_{1n}^2}\Big]-2(p_n-1)w_{1n}^{2p_n-2}\hat\phi_n\Big| \nonumber\\
\leq&(p_n-1)\Big|w_{1n}^{2p_n-3}h_{2n}-2w_{1n}^{2p_n-2}\hat\phi_n\Big|
+2|p_n-2|\big|w_{1n}^{2p_n-3}h_{2n}\big|,\ \ \ \forall\ x\in \Omega_{1n},
\end{align*}
where  $\theta_{1n}\in(0,1)$.
These thus imply from \eqref{e} that
\begin{align}\label{4.25b}
|F_{1n}(x)|\leq& (p_n-1)\Big|w_{1n}^{2p_n-3}(x)h_{2n}(x)-2w_{1n}^{2p_n-2}(x)\hat\phi_n(x)\Big|
+2|p_n-2|\big|w_{1n}^{2p_n-3}(x)h_{2n}(x)\big|\nonumber\\
&+C\Big[|h_{2n}^{p_n-\frac{1}{2}}(x)|+|h_{2n}^{p_n-1}(x)|\hat\phi_n(x)|\Big]\nonumber\\
\leq&2\Big[w_{1n}^{2p_n-3}(x)w_{2n}^2(x)+2|K_n|w_{1n}^{2(p_n-1)}(x)w_{2n}(x)\Big]\\
& +o\big(e^{-\frac{5}{4}\sqrt{|\mu_{2n}|}x_n}+|\hat\phi_n(x)|+|\hat\psi_n(x)|\big)\nonumber\\
\leq&C\Big(e^{-\frac{5}{4}\sqrt{|\mu_{2n}|}x_n}+K_ne^{-\sqrt{|\mu_{2n}|}x_n}\Big)+o\big(\|\hat\phi_n\|_\infty+\|\hat\psi_n\|_\infty\big)\nonumber
\end{align}
uniformly in $\big[-\frac{x_n}{2}, \frac{x_n}{2}\big]$ as $n\to\infty$, where we have used the facts that
\begin{align*}
0\leq& w_{1n}^{2p_n-3}(x)w_{2n}^2(x)
\leq\frac{3}{2} e^{-(2p_n-3)\sqrt{|\mu_{1n}|}x}e^{-2\sqrt{|\mu_{2n}|}(x_n-x)}\\
\leq&\frac{3}{2} e^{-2\sqrt{|\mu_{2n}|}x_n}e^{(5-2p_n)\sqrt{|\mu_{2n}|}x}
\leq\frac{3}{2} e^{-2\sqrt{|\mu_{2n}|}x_n}e^{\frac{5-2p_n}{2}\sqrt{|\mu_{2n}|}x_n}\ \ \ \text{in}\ \ [0, x_n/2],
\end{align*}
and
\begin{align*}
0\leq& w_{1n}^{2p_n-3}(x)w_{2n}^2(x)
\leq\frac{3}{2} e^{(2p_n-3)\sqrt{|\mu_{1n}|}x}e^{-2\sqrt{|\mu_{2n}|}(x_n-x)}
\leq \frac{3}{2}e^{-2\sqrt{|\mu_{2n}|}x_n} \ \ \text{in}\ \ [-x_n/2, 0].
\end{align*}
Similar to \eqref{4.25b}, we have
\begin{align}\label{4.25}
	|F_{2n}(x)|\leq C\Big(e^{-\frac{5}{4}\sqrt{|\mu_{2n}|}x_n}+K_ne^{-\sqrt{|\mu_{2n}|}x_n}\Big)+o\big(\|\hat\phi_n\|_\infty+\|\hat\psi_n\|_\infty\big)
\end{align}
uniformly in $\big[\frac{x_n}{2}, \frac{3x_n}{2}\big]$ as $n\to\infty$.
We thus conclude from \eqref{4.32b}--\eqref{4.25} that \eqref{O3a} holds true, which therefore completes the proof of Lemma \ref{O2}. \qed

Applying Lemma \ref{O2}, we now analyze the following  $L^\infty$-estimates of $\hat\phi_n$ and $\hat\psi_n$ as $n\to\infty$.

\begin{lem}\label{NL1}
There exists a constant $C>0$, which is independent  of $n>0$,  such that
\begin{eqnarray*}
\|\hat \phi_n\|_\infty+\|\hat \psi_n\|_\infty\leq C\Big[e^{-\sqrt{|\mu_{2n}|}\, x_n}+\big(b_n^2+|a_n|\sqrt{1-b_n^2}\big)|\mu_{2n}-\mu_{1n}|\Big]\ \   \text{as}\ \ n\to\infty,
\end{eqnarray*}
where $a_n$ and $b_n$ are given in \eqref{3.5}, and $x_n$ satisfying $\lim\limits_{n\to\infty} x_n=\infty$ is as in \eqref{3.2}.
\end{lem}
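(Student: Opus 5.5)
We argue by contradiction combined with a blow-up/nondegeneracy argument applied to the linearized equations \eqref{AA3.5}--\eqref{AA3.6}. Set
\[
\varepsilon_n:=e^{-\sqrt{|\mu_{2n}|}x_n}+\big(b_n^2+|a_n|\sqrt{1-b_n^2}\big)|\mu_{2n}-\mu_{1n}|,
\]
and suppose, up to a subsequence, that $M_n:=\|\hat\phi_n\|_\infty+\|\hat\psi_n\|_\infty$ satisfies $\varepsilon_n=o(M_n)$. Rewrite \eqref{AA3.5} as
\[
-\hat\phi_n''-\mu_{1n}\hat\phi_n-(u_{1n}^2+u_{2n}^2)^{p_n-1}\hat\phi_n-2(p_n-1)w_{1n}^{2p_n-2}\hat\phi_n=F_{1n}+R_{1n},
\]
where $R_{1n}$ collects the terms $K_nw_{2n}[\cdots]$, $b_n^2(\mu_{1n}-\mu_{2n})w_{1n}$, $a_n\sqrt{1-b_n^2}(\mu_{1n}-\mu_{2n})w_{2n}$ and $b_n(\mu_{1n}-\mu_{2n})\psi_n$. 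The last of these is $o(M_n)$ since $\mu_{2n}-\mu_{1n}\to0$ by \eqref{mun}, and the two multiplier terms are $O(\varepsilon_n)$. For the $K_n$ term, observe that $w_{2n}[(u_{1n}^2+u_{2n}^2)^{p_n-1}-w_{2n}^{2(p_n-1)}]$ has exactly the same structure as $F_{2n}$ plus $2(p_n-1)w_{2n}^{2p_n-2}\hat\psi_n$. It should therefore be moved to the left-hand side as part of a coupled linear operator in $(\hat\phi_n,\hat\psi_n)$, or else bounded via Lemma \ref{O2} by $O(\varepsilon_n)+C|K_n|M_n$. Treat \eqref{AA3.6} in the same way. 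Lemma \ref{O2} then bounds $F_{in}$ by $C\varepsilon_n+o(M_n)$, using $|K_n|\le C$ and $e^{-\sqrt{|\mu_{1n}|}x_n}\le e^{-\sqrt{|\mu_{2n}|}x_n}$.

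Next normalize: $\Phi_n:=\hat\phi_n/M_n$ and $\Psi_n:=\hat\psi_n/M_n$, so that $\|\Phi_n\|_\infty+\|\Psi_n\|_\infty=1$ and the right-hand sides become $o(1)$ in $L^\infty$, apart from the coupled $K_n$ pieces, which stay in the linear operator. Since $\mu_{in}\to-1/16<0$ and the potential decays away from $0$ and $x_n$, a maximum-principle/comparison argument shows that $|\Phi_n|+|\Psi_n|$ cannot attain values near $1$ outside $B_R(0)\cup B_R(x_n)$ for $R$ large. Hence the sup is attained, up to translation, near $0$ or near $x_n$. Passing to the limit via \eqref{4.8a} and elliptic estimates, near $0$ one obtains a bounded nontrivial solution of
\[
-\Phi''+\tfrac1{16}\Phi-3w_*^2\Phi=0 \quad \text{(plus a $K_0$-coupled component)},
\]
and the analogous equation near $x_n$. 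By the nondegeneracy of $w_*$ (inherited from that of $w$ in \eqref{a8}), $\Phi=c\,w_*'$. Since $p_n\to2$ and the nonlinearity $(u_1^2+u_2^2)u$ is rotation invariant, the $K_0$-coupled system reduces to this scalar equation after diagonalization, and the extra kernel generated by rotations is $w_*$ itself. Thus the limit lies in $\mathrm{span}\{w_*,w_*'\}$ for each component.

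The orthogonality relations \eqref{3.7}--\eqref{3.8} finish the argument. They say that $\hat\phi_n$-type combinations are orthogonal to $w_{1n}$ and $w_{1n}'$, and $\hat\psi_n$-type combinations to $w_{2n}$ and $w_{2n}'$. Dividing by $M_n$ and passing to the limit, these force $c=0$ and kill the $w_*$ component, so $\Phi\equiv0$, contradicting $|\Phi_n(y_n)|\to$ a positive value at the maximum point. Therefore $M_n\le C\varepsilon_n$, which is the claim.

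The main obstacle is the bookkeeping in this orthogonality step. The conditions \eqref{3.7}--\eqref{3.8} are stated for the rotated combinations $-b_n\phi_n+\sqrt{1-b_n^2}\psi_n$ and similar, not directly for $\hat\phi_n,\hat\psi_n$. One must express these combinations through $(\hat\phi_n,\hat\psi_n)$, using that the matrix relating them is invertible since $a_n\to a_0$ and $b_n\to-a_0$ with $a_0\le\sqrt2/2<1$. One must also check that the extra kernel element $w_*$, which comes from the rotational degeneracy at $p=2$ and $\mu_1=\mu_2$, is indeed excluded by the $\partial_a$ and $\partial_b$ conditions. I would first verify this kernel and orthogonality matching at the level of the limiting system, before worrying about the exponential tails.
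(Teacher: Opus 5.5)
Your architecture is the paper's: contradiction, normalization by $M_n$, exclusion of far-away maxima via $-\mu_{in}>0$ at the maximum point, blow-up near $0$ or $x_n$, then nondegeneracy plus \eqref{3.7}--\eqref{3.8}. However, the step that actually closes the argument is deferred, and the version you state would not close it.

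\emph{First gap: you miss that $K_n\to0$.} This is immediate from \eqref{3.5}: $K_n\to a_0\sqrt{1-a_0^2}-a_0\sqrt{1-a_0^2}=0$. Without it, your bound $O(\varepsilon_n)+C|K_n|M_n$ for the $K_n$-term is not $o(M_n)$. The limiting equations then stay undetermined (``plus a $K_0$-coupled component''), and the appeal to diagonalization by rotation invariance is not an argument. With it, $K_nw_{2n}[(u_{1n}^2+u_{2n}^2)^{p_n-1}-w_{2n}^{2(p_n-1)}]=K_nF_{2n}+2(p_n-1)K_nw_{2n}^{2p_n-2}\hat\psi_n=O(\varepsilon_n)+o(M_n)$, and likewise in \eqref{AA3.6}. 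So $\Phi_n$ and $\Psi_n$ each satisfy a scalar equation with $o(1)$ right-hand side.

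The limits are then precise, and ``$\mathrm{span}\{w_*,w_*'\}$ for each component'' is too coarse. Near $0$, $\Phi$ solves $-\Phi''+\frac1{16}\Phi-3w_*^2\Phi=0$ (kernel $w_*'$), while $\Psi$ solves $-\Psi''+\frac1{16}\Psi-w_*^2\Psi=0$ (kernel $w_*$). Near $x_n$ the roles are exchanged: there $w_{1n}$ is negligible, and the extra potential $2(p_n-1)w_{2n}^{2p_n-2}$ sits in the $\hat\psi_n$ equation.

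\emph{Second gap: your orthogonality bookkeeping is swapped.} Set $D_n:=\sqrt{(1-a_n^2)(1-b_n^2)}-a_nb_n\to1$. Inverting \eqref{AA3.2} gives $-b_n\phi_n+\sqrt{1-b_n^2}\psi_n=D_n^{-1}(\hat\psi_n-K_n\hat\phi_n)$ and $\sqrt{1-a_n^2}\phi_n-a_n\psi_n=D_n^{-1}(\hat\phi_n-K_n\hat\psi_n)$. The $\delta$- and $\eta$-conditions read exactly $\int_{\R}\hat\phi_nw_{1n}'dx=0$ and $\int_{\R}\hat\psi_nw_{2n}'dx=0$. Hence, using $K_n\to0$:
\begin{itemize}
\item $\hat\phi_n$ is asymptotically orthogonal to $w_{1n}'$ and $w_{2n}$;
\item $\hat\psi_n$ is asymptotically orthogonal to $w_{1n}$ and $w_{2n}'$.
\end{itemize}
This is not ``$\hat\phi_n$ to $w_{1n},w_{1n}'$ and $\hat\psi_n$ to $w_{2n},w_{2n}'$'' as you wrote.

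Under your matching, the two rotational modes ($w_*$ for $\Psi$ at $0$, and $w_*$ for $\Phi$ at $x_n$) are unconstrained, so no contradiction follows. Under the correct matching, each of the four kernel elements is killed by exactly one of the four conditions. This is how the paper concludes: the $\delta$-condition for $\widetilde\phi_n$ near $0$, the $a$-condition in \eqref{3.8} for $\widetilde\phi_n$ near $x_n$, and symmetrically the $b$- and $\eta$-conditions for $\widetilde\psi_n$.
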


\noindent \textbf{Proof.}  On the contrary, suppose that
\begin{align*}
	\lim\limits_{n\to\infty}\frac{\|\hat  \phi_n\|_\infty+\|\hat  \psi_n\|_\infty} {e^{-\sqrt{|\mu_{2n}|}\, x_n}+\big(b_n^2+|a_n|\sqrt{1-b_n^2}\big)|\mu_{2n}-\mu_{1n}|}=\infty.
\end{align*}
Denote
\begin{align*}
	\widetilde {\phi}_n:=\frac{\hat \phi_n} {\|\hat \phi_n\|_\infty+\|\hat\psi_n\|_\infty},\ \ \ \widetilde {\psi}_n:=\frac{\hat \psi_n} {\|\hat\phi_n\|_\infty+\|\hat\psi_n\|_\infty}.
\end{align*}
Applying Lemma \ref{O2}, it then follows from \eqref{AA3.5} and \eqref{AA3.6} that
\begin{align}\label{4.27}
	&-\widetilde {\phi}''_n-\mu_{1n}\widetilde {\phi}_n-({u}_{1n}^2+{u}_{2n}^2)^{p_n-1}\widetilde {\phi}_n-2(p_n-1)w_{1n}^{2p_n-2}\widetilde {\phi}_n\\
	 =&o(|\widetilde {\phi}_n|+|\widetilde {\psi}_n|)+O\Big(\frac{e^{-\sqrt{|\mu_{2n}|}\, x_n}+(b_n^2+|a_n|\sqrt{1-b_n^2})|\mu_{2n}-\mu_{1n}|}{\|\hat\phi_n\|_\infty+\|\hat\psi_n\|_\infty}\Big)\ \ \text{in}\ \, \R \ \   \text{as}\ \ n\to\infty, \nonumber
\end{align}
where $f_n:=O(A_n)$ means that  there exists a constant $C>0$, independent of $n>0$, such that $|f_n|\leq C A_n$ holds for any sufficiently large $n>0$. Since $\|\widetilde {\phi}_n\|_\infty+\|\widetilde {\psi}_n\|_\infty \equiv 1$, without loss of generality, we suppose that
\begin{align}\label{4.29a}
	\widetilde {\phi}_n(y_n)=\|\widetilde {\phi}_n\|_\infty\geq1/2,
\end{align}
where $y_n\in \R$ is a global maximum point of $\widetilde {\phi}_n$ in $\R$.

We now consider the first case where  both $\{y_n\}$  and  $\{y_n - x_n\}$ are unbounded in $n>0$.  In this case, it follows from \eqref{4.27}  that
\begin{align}\label{4.41}
	-\widetilde {\phi}''_n-\mu_{1n}\widetilde {\phi}_n=o(1)
	\ \ \ \text{in}\ B_1(y_n) \ \ \text{as}\ \ n\to\infty,
\end{align}
which however contradicts with the fact that
\begin{align}\label{4.42a}
	-\widetilde {\phi}''_n(y_n)-\mu_{1n}\widetilde {\phi}_n(y_n)\geq-\mu_{1n}\widetilde {\phi}_n(y_n)\geq\frac{|\mu_{1n}|}{2}=1/32+o(1) \ \ \text{as}\ \ n\to\infty.
\end{align}
This then completes the proof of Lemma \ref{NL1} for the first case.

We then consider the second case where  $\{y_n\}$ is bounded uniformly in $n>0$.
In this case, we note from \eqref{4.27}  that
\begin{eqnarray}\label{4.30}
-\widetilde {\phi}''_n-\mu_{1n}\widetilde{\phi}_n-(2p_n-1)w_{1n}^{2p_n-2}\widetilde{\phi}_n=o(1)\ \ \ \text{in}\ B_{x_n/4}(0)\ \ \text{as}\ \ n\to\infty. 
\end{eqnarray}
Since  $\|\widetilde{\phi}_n\|_\infty\leq1$, the standard elliptic regularity  gives  from the above that $\{\widetilde{\phi}_n\}$ is bounded uniformly in $H^2(\R)$. This then implies that 
 up to a subsequence  if necessary,
\begin{equation}\label{4.29}
	\widetilde{\phi}_n\to \widetilde{\phi}\ \ \ \text{strongly in}\ C^1_{loc}(\R)\ \text{as}\ n\to\infty\ \ \text{for some}\ \widetilde{\phi}\in C^1_{loc}(\R),
\end{equation}
and
\begin{equation}\label{4.31}
-\widetilde {\phi}''+\frac{1}{16}\widetilde{\phi}-3w_*^2\widetilde{\phi}=0\ \   \text{in}\ \, \R,
\end{equation}
where $w_*(x)>0$ is as in \eqref{2.8b}.
Since $\text{ker}\big(-D_{xx}+\frac{1}{16}-3w_*^2\big)=\{w_*'\}$,  we deduce from \eqref{4.29} that
\begin{align}\label{4.32}	
	\widetilde{\phi}=c_1w'_* \ \ \ \text{holds for some}\ \  c_1\in\R.
\end{align}
Moreover, one can derive from \eqref{3.7} and \eqref{AA3.2} that
\begin{equation}\label{3.39}
\begin{split}
0=&\lim\limits_{n\to\infty}\inte \frac{\sqrt{1-b_n^2} \phi_n+b_n\psi_n}{\|\hat\phi_n\|_\infty+\|\hat\psi_n\|_\infty}w'_{1n}dx
=\lim\limits_{n\to\infty}\inte 	\widetilde\phi_n w'_{1n}dx \\
=&\lim\limits_{n\to\infty}\inte \widetilde\phi_n w'_*dx=\inte 	\widetilde\phi w'_*dx.
\end{split}
\end{equation}
We then conclude from \eqref{4.32} and \eqref{3.39} that $c_1=0$, i.e.,
\begin{align}\label{3.40}
	\widetilde{\phi}(x)\equiv0\ \ \text{ in}\ \,\R,
\end{align}
which however contradicts with \eqref{4.29a} in view of \eqref{4.29}. This then completes the proof of Lemma \ref{NL1} for the second case.

We finally consider the third case where  $\{y_n - x_n\}$ is bounded uniformly in $n>0$. In this case,  the same arguments of  \eqref{4.29} and \eqref{4.31} then yield that  there exists  a function $\widetilde{\phi}\in H^1(\R)$  such that  up to a subsequence  if necessary,
\begin{equation}\label{3.41}
\widetilde{\phi}_n(x+x_n)\to \widetilde{\phi}\ \ \ \text{strongly in}\ C_{loc}^1(\R)\ \text{as}\ n\to\infty,
\end{equation}
and
\begin{eqnarray*}
	-\widetilde {\phi}''+\frac{1}{16}\widetilde {\phi}-w_*^2\widetilde {\phi}=0\ \ \text{in}\,\ \R.
\end{eqnarray*}
Since $\text{ker}\big(-D_{xx}+\frac{1}{16}-w_*^2\big)=\{w_*\}$,  we deduce that
\begin{align}\label{3.42}
\widetilde {\phi}=c_2w_*\ \   \text{holds for some} \,\  c_2\in\R.
\end{align}
Applying \eqref{3.8}, the same argument of \eqref{3.40} thus gives from   \eqref{3.41} and \eqref{3.42} that $\widetilde{\phi}(x)\equiv0$ in  $\R$,
which contradicts again with \eqref{4.29a} for  the third case. This therefore completes the  proof of Lemma \ref{NL1}.
\qed

Together with (\ref{4.10a}) and (\ref{AA3.2}), the following lemma gives the main result of this section, which is concerned with the limiting profile of $(u_{1n}, u_{2n})$ as $ n\to\infty$.

\begin{lem}\label{a}
Suppose $\{a_n\}$ and $\{b_n\}$ are as in  \eqref{3.5}, and let $\{K_n\}$ be defined by \eqref{4.27b}. Then we have
\begin{align}\label{4.43}
\lim\limits_{n\to\infty}a_n&=-\lim\limits_{n\to\infty}b_n=\sqrt{2}/2,\\
 a_n\sqrt{1-b_n^2}(\mu_{2n}-\mu_{1n})&=-b_n\sqrt{1-a_n^2}(\mu_{2n}-\mu_{1n})[1+o(1)]\nonumber\\
&=\frac{1+o(1)}{4}e^{-\sqrt{|\mu_{2n}|}x_n}\ \ \ \text{as}\ \ n\to\infty,\label{4.43b}\\
\sqrt{2}(a_n+b_n)&=[1+o(1)]K_n=-[1+o(1)]\inte w_{1n}w_{2n}dx\nonumber\\
&=-\frac{1+o(1)}{2}x_ne^{-\sqrt{|\mu_{2n}|}\, x_n}\ \ \mbox{as}\ \, n\to\infty, \label{ab1}
\end{align}
and
\begin{eqnarray}\label{phi}
\|\hat\phi_n\|_\infty+\|\hat\psi_n\|_\infty \leq
Ce^{-\sqrt{|\mu_{2n}|}\, x_n}\ \ \ \text{as}\ \ n\to\infty,
\end{eqnarray}
where  $C>0$ is independent of $n>0$.
\end{lem}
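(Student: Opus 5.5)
We will test the equations (AA3.5) and (AA3.6) against the kernel-type functions $w_{1n}$ and $w_{2n}$. The orthogonality conditions (3.7)--(3.8) make the leading terms in $\hat\phi_n,\hat\psi_n$ disappear, so only interaction integrals and multiplier differences remain. These produce algebraic relations among $a_n$, $b_n$, $K_n$, $\mu_{2n}-\mu_{1n}$ and $e^{-\sqrt{|\mu_{2n}|}x_n}$.

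\textbf{Orthonormality.} First we use $\langle u_{in},u_{jn}\rangle=\delta_{ij}$. By (4.10a),
\[
0=\inte u_{1n}u_{2n}dx=\big(b_n\sqrt{1-b_n^2}+a_n\sqrt{1-a_n^2}\big)\|w_*\|_2^2[1+o(1)]+K_n\inte w_{1n}w_{2n}dx+(\text{terms in }\phi_n,\psi_n).
\]
The constraints (3.7)--(3.8) kill the first-order error terms against $w_{1n},w_{2n}$, up to the small defect $\|\widetilde w_{in}\|_2^2-1$. The quadratic error terms are $O(\|\phi_n\|_\infty^2+\|\psi_n\|_\infty^2)$, which is controlled by Lemma \ref{NL1}. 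Using also $\|u_{1n}\|_2=\|u_{2n}\|_2=1$ together with $\|\widetilde w_{1n}\|_2,\|\widetilde w_{2n}\|_2$, this should yield
\[
K_n=-[1+o(1)]\inte w_{1n}w_{2n}dx.
\]
In 1D, with $\mu_{in}\to-1/16$, the overlap satisfies $\inte w_{1n}w_{2n}\approx c\,x_ne^{-\sqrt{|\mu_{2n}|}x_n}$. The polynomial factor $x_n$ appears because both exponentials have nearly equal rates, so the overlap region has length of order $x_n$. Computing the constant from the asymptotics $w_*\sim\sqrt2\, e^{-|x|/4}$ should give $c=1/2$, which is (\ref{ab1}). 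Expanding $K_n=a_n\sqrt{1-b_n^2}+b_n\sqrt{1-a_n^2}$ near $b=-a$ gives $K_n=(a_n+b_n)/\sqrt{1-a_0^2}\,[1+o(1)]$. The factor $\sqrt2$ in (\ref{ab1}) then follows once $a_0=\sqrt2/2$ is known.

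\textbf{Testing the equations.} Next we multiply (AA3.5) by $w_{2n}$ and (AA3.6) by $w_{1n}$, and integrate. The left-hand sides are $O(\|\hat\phi_n\|_\infty)$ times small factors, because $w_{2n}$ is nearly in the kernel of the $\hat\psi$-operator, not the $\hat\phi$-one. Concretely, we integrate by parts and use (AA3.1). The term $(\mu_{2n}-\mu_{1n})\inte\hat\phi_n w_{2n}$ is handled with (3.8), and the interaction terms $w_{1n}w_{2n}^{2p_n-2}$, $w_{2n}w_{1n}^{2p_n-2}$ are handled by Lemma \ref{O2}. The right-hand side of the $w_{2n}$-tested (AA3.5) contains $a_n\sqrt{1-b_n^2}(\mu_{1n}-\mu_{2n})\|w_{2n}\|_2^2$ plus the nonlinear interaction
\[
\inte w_{1n}w_{2n}\big[(u_{1n}^2+u_{2n}^2)^{p_n-1}-w_{1n}^{2(p_n-1)}\big]\approx\inte w_{2n}^{2p_n-1}w_{1n}\approx \tfrac{1}{4}e^{-\sqrt{|\mu_{2n}|}x_n}.
\]
This integral localizes near $x_n$, and its constant comes from $\inte w_*^3e^{x/4}dx$ and the tail constant $\sqrt2$. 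Balancing these gives the first equality chain of (\ref{4.43b}), and the symmetric computation gives the one with $b_n\sqrt{1-a_n^2}$. Comparing the two identities yields $a_n\sqrt{1-b_n^2}=-b_n\sqrt{1-a_n^2}[1+o(1)]$. A further relation comes from testing (AA3.5) with $w_{1n}$ and (AA3.6) with $w_{2n}$, combined with $\mu_{1n}+\mu_{2n}$ and the refined normalizations $\|\widetilde w_{in}\|_2$ versus $\mu_{in}$. This relation should force $b_n^2=a_n^2+o(1)$ with $a_0^2=1-a_0^2$, hence $a_0=\sqrt2/2$, which is (\ref{4.43}). The value cannot be $a_0=0$: otherwise the right-hand side $\tfrac14e^{-\sqrt{|\mu_{2n}|}x_n}$ would force $|\mu_{2n}-\mu_{1n}|$ to be much larger than the exponential. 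That in turn would contradict the $b_n^2(\mu_{2n}-\mu_{1n})$ balance, since the lowest eigenvalue gap is exponentially small for two separated wells.

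\textbf{The estimate (\ref{phi}) and the main obstacle.} Once (\ref{4.43b}) holds, the quantity $(b_n^2+|a_n|\sqrt{1-b_n^2})|\mu_{2n}-\mu_{1n}|$ is $O(e^{-\sqrt{|\mu_{2n}|}x_n})$, and Lemma \ref{NL1} gives (\ref{phi}) directly. The main obstacle is the step pinning $a_0=\sqrt2/2$ and the precise constants. Orthogonal invariance makes $a_0$ invisible at leading order, so it has to be extracted from exponentially small tunneling terms. These must be computed with exact constants while keeping the $o(\cdot)$ errors from Lemmas \ref{O2} and \ref{NL1} below $e^{-\sqrt{|\mu_{2n}|}x_n}$. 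This may need a bootstrap: first use Lemma \ref{NL1} crudely to get $|\mu_{2n}-\mu_{1n}|=O(e^{-\sqrt{|\mu_{2n}|}x_n})$, then refine. The $x_n$ prefactor from the near-resonant overlap integral also has to be tracked carefully.
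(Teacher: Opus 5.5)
\textbf{Gap: the value $a_0=\sqrt2/2$.} Your first test (multiplying \eqref{AA3.5} by $w_{2n}$ and \eqref{AA3.6} by $w_{1n}$) and your deduction of \eqref{phi} from Lemma \ref{NL1} match the paper. However, the proposal has no working mechanism for $a_0=\sqrt2/2$, which is the heart of \eqref{4.43}.

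The identity that fixes $a_0$ is the norm balance $\int_{\mathbb{R}}(\hat u_{1n}^2-\hat u_{2n}^2)\,dx=0$, since both $\hat u_{in}$ have unit $L^2$ norm; this is \eqref{4.42}. It must be combined with $\int_{\mathbb{R}}(w_{1n}^2-w_{2n}^2)\,dx=[8+o(1)](\mu_{2n}-\mu_{1n})$, which comes from $\|w_{in}\|_2^2=|\mu_{in}|^{\frac{3-p_n}{2(p_n-1)}}\|\widetilde w_n\|_2^2$.

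In that identity the unknowns are the projections $\int_{\mathbb{R}} w_{1n}\hat\phi_n\,dx$ and $\int_{\mathbb{R}} w_{2n}\hat\psi_n\,dx$. Testing \eqref{AA3.5} against $w_{1n}$ cannot produce them: since $\mathcal{L}_{1n}w_{1n}=-2(p_n-1)w_{1n}^{2p_n-1}$, that test only determines $\int_{\mathbb{R}} w_{1n}^{2p_n-1}\hat\phi_n\,dx$.

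The missing idea is to test against the preimage $\frac{1}{2\mu_{1n}}\big(xw_{1n}'+\frac{1}{p_n-1}w_{1n}\big)$ of $w_{1n}$ under $\mathcal{L}_{1n}$, i.e. the dilation ($\mu$-derivative) of the soliton. This gives $\int_{\mathbb{R}} w_{1n}\hat\phi_n\,dx=4b_n^2(\mu_{1n}-\mu_{2n})+o(\cdot)$, and symmetrically $\int_{\mathbb{R}} w_{2n}\hat\psi_n\,dx=-4a_n^2(\mu_{1n}-\mu_{2n})+o(\cdot)$. Inserted into the norm balance, these yield $8=8(a_0^2+b_0^2)$, and only then does $b_0=-a_0$ give $a_0^2=1/2$.

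So the relation that pins $a_0$ is $a_0^2+b_0^2=1$. Heuristically, the effective multipliers $\mu_{1n}+b_n^2(\mu_{2n}-\mu_{1n})$ and $\mu_{2n}-a_n^2(\mu_{2n}-\mu_{1n})$ of the two bumps must coincide. By contrast, $b_n^2=a_n^2+o(1)$ is automatic from \eqref{3.5}, and $\mu_{1n}+\mu_{2n}$ plays no role.

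\textbf{Gap: excluding $a_0=0$.} The same norm balance is what excludes $a_0=0$. If $a_n,b_n\to0$, then \eqref{4.50} forces $(\mu_{2n}-\mu_{1n})e^{\sqrt{|\mu_{2n}|}x_n}\to\infty$. On the other hand, \eqref{4.42}, \eqref{4.44a} and Lemma \ref{NL1} give $|\mu_{2n}-\mu_{1n}|\le C\big[e^{-\sqrt{|\mu_{2n}|}x_n}+(b_n^2+|a_n|\sqrt{1-b_n^2})|\mu_{2n}-\mu_{1n}|\big]$, a contradiction.

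Your appeal to exponentially small gaps for two separated wells is not available a priori. The wells are built from $w_{1n}$ and $w_{2n}$, whose multipliers differ by exactly the unknown gap, so their near-equality must come from the normalization. Lemma \ref{NL1} alone does not yield $|\mu_{2n}-\mu_{1n}|=O(e^{-\sqrt{|\mu_{2n}|}x_n})$, as your bootstrap suggests.

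\textbf{Smaller points.} In $\int_{\mathbb{R}} u_{1n}u_{2n}\,dx$ the coefficient of $\int_{\mathbb{R}} w_{1n}w_{2n}\,dx$ is $a_nb_n+\sqrt{(1-a_n^2)(1-b_n^2)}$, not $K_n$. Moreover, after using \eqref{3.7}--\eqref{3.8}, the first-order error there is $2b_n\sqrt{1-b_n^2}\int_{\mathbb{R}} w_{1n}\hat\phi_n\,dx+2a_n\sqrt{1-a_n^2}\int_{\mathbb{R}} w_{2n}\hat\psi_n\,dx$, which is of order $e^{-\sqrt{|\mu_{2n}|}x_n}$ and is not annihilated. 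That relation therefore mixes the $K_n$-identity with the norm balance; the clean identity is $\int_{\mathbb{R}}\hat u_{1n}\hat u_{2n}\,dx=K_n$, as in \eqref{4.36a}. Also, the tail constant of $w_*$ is $\sqrt2/2$, not $\sqrt2$; with $\sqrt2$ the overlap would be $2x_ne^{-\sqrt{|\mu_{2n}|}x_n}$ rather than $\frac12x_ne^{-\sqrt{|\mu_{2n}|}x_n}$.
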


\noindent \textbf{Proof.}  We shall carry out the  proof  by three steps.

$Step\ 1$. In this step, we  prove that
\begin{equation}\label{4.45a}
\begin{split}
&a_n\sqrt{1-b_n^2}(\mu_{2n}-\mu_{1n})\\
=&\frac{1}{4}e^{-\sqrt{|\mu_{1n}|}x_n}
+o\Big(e^{-\sqrt{|\mu_{2n}|}x_n}+\big[b_n^2+|a_n|\sqrt{1-b_n^2}\big]|\mu_{2n}-\mu_{1n}|\Big)\ \ \mbox{as}\,\ n\to\infty,
\end{split}
\end{equation}
and
\begin{equation}\label{4.50}
\begin{split}
&-b_n\sqrt{1-a_n^2}(\mu_{2n}-\mu_{1n})\\
=&
\frac{1}{4}e^{-\sqrt{|\mu_{2n}|}x_n}+o\Big(e^{-\sqrt{|\mu_{2n}|}x_n}+\big[b_n^2+|a_n|\sqrt{1-b_n^2}\big]
|\mu_{2n}-\mu_{1n}|\Big)\ \ \mbox{as}\,\ n\to\infty.
\end{split}
\end{equation}

Actually, since
\begin{align}\label{3.51}
&\lim\limits_{n\to\infty}e^{\sqrt{|\mu_{1n}|}x_n}\inte w_{1n}w_{2n}^{2p_n-1}dx\\
=&\inte \lim\limits_{n\to\infty} \Big(e^{\sqrt{|\mu_{1n}|}x_n} w_{1n}(x+x_n)w_{2n}^{2p_n-1}(x+x_n)\Big)dx=1/4,\nonumber
\end{align}
the same argument of \eqref{4.25b} gives that
\begin{align}\label{4.44b}
&\inte \big[(u_{1n}^2+u_{2n}^2)^{p_n-1}-w_{1n}^{2(p_n-1)}\big]w_{1n}w_{2n}dx\nonumber\\
=&\int_{-x_n/2}^{3x_n/2} \big[(u_{1n}^2+u_{2n}^2)^{p_n-1}-w_{1n}^{2(p_n-1)}\big]w_{1n}w_{2n}dx+o\big(e^{-\frac{3}{2}\sqrt{|\mu_{2n}|}x_n})\\
=&\int_{x_n/2}^{3x_n/2} w_{1n}w_{2n}^{2p_n-1}dx+o\big(e^{-\sqrt{|\mu_{2n}|}x_n}+\|\hat\phi_n\|_\infty+\|\hat\psi_n\|_\infty\big)\nonumber\\
=&\frac{1}{4}e^{-\sqrt{|\mu_{1n}|}x_n}+o\big(e^{-\sqrt{|\mu_{2n}|}x_n}+\|\hat \phi_n\|_\infty+\|\hat \psi_n\|_\infty\big)\ \ \ \text{as}\ \ n\to\infty,\nonumber
\end{align}
where we have used \eqref{win} and  the dominated convergence theorem in \eqref{3.51}. 
Moreover, since $\lim\limits_{n\to\infty}\|w_{in}\|_2^2=1$ and $(u_{in}, u_{jn})=\delta_{ij}$ for $i, j=1,2$,
 it can be checked from \eqref{4.6} and \eqref{4.10} that
\begin{align}\label{4.36a}
K_n=&\inte \hat u_{1n}\hat u_{2n}dx\nonumber\\
=& K_n\inte (w_{1n}^2+w_{2n}^2)dx+(1+K_n^2)\inte w_{1n}w_{2n}dx\\
&+\inte\Big[w_{1n}\hat\psi_n+w_{2n}\hat\phi_n+K_n\big(w_{1n}\hat\phi_n+w_{2n}\hat\psi_n\big) +\hat\phi_n\hat\psi_n\Big]dx,\nonumber
\end{align}
which yields  that
\begin{equation}\label{4.36}
\begin{split}
K_n=&O\Big( \inte w_{1n}w_{2n}dx+\|\hat\phi_n\|_\infty+\|\hat\psi_n\|_\infty\Big)\\
=&O\Big( x_ne^{-\sqrt{|\mu_{2n}|}x_n}+\|\hat\phi_n\|_\infty+\|\hat\psi_n\|_\infty\Big)\ \ \ \text{as}\ \ n\to\infty.
\end{split}\end{equation}
Consequently, multiplying both hand sides of \eqref{AA3.5} by $w_{2n}$ and  integrating over $\R$,   one can calculate from \eqref{4.44b} and \eqref{4.36} that
\begin{align}\label{O4}
&L.H.S. \ \ \text{of}\ \inte \eqref{AA3.5} \times w_{2n}dx \nonumber\\
=&\inte \Big\{(-\Delta w_{2n})\hat\phi_n-\mu_{1n}w_{2n}\hat\phi_n-(u_{1n}^2+u_{2n}^2)^{p_n-1}w_{2n}\hat\phi_n\Big\}dx\\
=&\inte \Big\{(\mu_{2n}-\mu_{1n})w_{2n}\hat\phi_n+\Big[w_{2n}^{2(p_n-1)}-({u}_{1n}^2+{u}_{2n}^2)^{p_n-1}\Big]w_{2n}\hat\phi_n\Big\}dx\nonumber\\[1.5mm]
=&o\big(\|\hat\phi_n\|_\infty\big)\ \ \ \text{as}\ \ n\to\infty,\nonumber
\end{align}
and
\begin{align}\label{O5}
&R.H.S. \ \ \text{of}\ \inte \eqref{AA3.5} \times w_{2n}dx \nonumber\\
=&\inte \Big[(u_{1n}^2+u_{2n}^2)^{p_n-1}-w_{1n}^{2(p_n-1)}\Big]w_{1n}w_{2n}dx\nonumber\\
&+K_n\inte\Big[(u_{1n}^2+u_{2n}^2)^{p_n-1}-w_{2n}^{2(p_n-1)}\Big]w^2_{2n}dx+a_n\sqrt{1-b_n^2}(\mu_{1n}-\mu_{2n})\|w_{2n}\|_2^2\nonumber\\
&
+b_n^2(\mu_{1n}-\mu_{2n})\inte w_{1n}w_{2n}dx +o\big(\|\hat\phi_n\|_\infty+\|\hat\psi_n\|_\infty\big)\\
=&\frac{1}{4}e^{-\sqrt{|\mu_{1n}|}x_n}+a_n\sqrt{1-b_n^2}(\mu_{1n}-\mu_{2n})\nonumber\\
&+o\Big(e^{-\sqrt{|\mu_{2n}|}x_n} +(b_n^2+|a_n|\sqrt{1-b_n^2})|\mu_{2n}-\mu_{1n}|+\|\hat\phi_n\|_\infty+\|\hat\psi_n\|_\infty\Big)\ \ \ \text{as}\ \ n\to\infty.\nonumber
\end{align}
Applying Lemma \ref{NL1},  we immediately obtain  that  \eqref{4.45a} holds true.
Similarly, multiplying both hand sides of \eqref{AA3.6} by $w_{1n}$ and  integrating over $\R$,  one can deduce that \eqref{4.50} holds true, and Step 1 is thus done.

$Step\ 2$.
In this step, we prove that  \eqref{4.43b} and \eqref{phi} hold true. We first claim that
\begin{align}\label{01}
	\lim\limits_{n\to\infty}a_n\neq0.
\end{align}
On the contrary, suppose  $\lim\limits_{n\to\infty}a_n=0.$
It then yields from \eqref{3.5} and \eqref{4.50} that $-\lim\limits_{n\to\infty}b_n=\lim\limits_{n\to\infty}a_n=0$ and
\begin{align}\label{ab}
\lim\limits_{n\to\infty}(\mu_{2n}-\mu_{1n})e^{\sqrt{|\mu_{2n}|}x_n}=\infty.
\end{align}
Using the fact that $\inte \big(\hat{u}_{1n}^2-\hat{u}_{2n}^2\big)dx=0$, we
have
\begin{align}\label{4.42}
0=&\inte(1-K_n^2)(w_{1n}^2-w_{2n}^2)dx\\
&+\inte \Big[2(w_{1n}+K_nw_{2n})\hat\phi_n-2(K_nw_{1n}+w_{2n})\hat\psi_n\Big]dx+\|\hat\phi_n\|_2^2-\|\hat\psi_n\|_2^2.\nonumber
\end{align}
We also calculate from \eqref{win} that
\begin{align}\label{4.44a}
	\inte(w_{1n}^2-w_{2n}^2)dx=  \big[8+o(1)\big](\mu_{2n}-\mu_{1n})\ \ \ \text{as}\ \ n\to\infty.
\end{align}
We thus deduce from \eqref{4.42}, \eqref{4.44a} and Lemma \ref{NL1} that
\begin{align}\label{4.45}
&|\mu_{2n}-\mu_{1n}|
\leq
C\Big[e^{-\sqrt{|\mu_{2n}|}\, x_n}+\big(b_n^2+|a_n|\sqrt{1-b_n^2}\big)|\mu_{2n}-\mu_{1n}|\Big]\ \ \ \text{as}\ \ n\to\infty.
\end{align}
Since $-\lim\limits_{n\to\infty}b_n=\lim\limits_{n\to\infty}a_n=0$, we  derive  from \eqref{4.45} that $|\mu_{2n}-\mu_{1n}|
\leq Ce^{-\sqrt{|\mu_{2n}|}\, x_n}$ uniformly as  $n\to\infty$, which however contradicts with \eqref{ab}. Hence,  the claim \eqref{01} is proved.

By the fact that $\lim\limits_{n\to\infty}a_n=-\lim\limits_{n\to\infty}b_n\in[0, \sqrt{2}/2]$, we deduce from \eqref{4.50}  and \eqref{01} that for some $C>0,$
\begin{align*}
-b_n\sqrt{1-a_n^2}(\mu_{2n}-\mu_{1n})
=\frac{1+o(1)}{4}e^{-\sqrt{|\mu_{2n}|}x_n}\ \ \ \text{as}\ \ n\to\infty,
\end{align*}
and
\begin{align}\label{4.63}
|\mu_{2n}-\mu_{1n}|=\big[C+o(1)\big]e^{-\sqrt{|\mu_{2n}|}x_n}\ \ \text{as}\ \ n\to\infty.
\end{align}
Employing Lemma \ref{NL1}, we hence conclude from \eqref{4.45a} that \eqref{4.43b} and \eqref{phi} hold true. This proves Step 2.

$Step\ 3$.  Denote
$$\mathcal{L}_{in}:=-D_{xx}-\mu_{in}-(2p_n-1)w_{in}^{2(p_n-1)}\ \ \ \text{in}\ \ \R, \ \ i=1,2.$$
Since $$w_{1n}=\frac{1}{2\mu_{1n}}\mathcal{L}_n\Big(xw_{1n}'+\frac{1}{p_n-1}w_{1n}\Big),$$
and
$$w_{2n}=\frac{1}{2\mu_{2n}}\mathcal{L}_n\Big((x-x_n)w_{2n}'+\frac{1}{p_n-1}w_{2n}\Big),$$
we have
\begin{align}\label{4.57}
	\inte w_{1n}\hat \phi_ndx
	=&\frac{1}{2\mu_{1n}}\inte \mathcal{L}_{1n}\hat\phi_n\Big(xw_{1n}'+\frac{1}{p_n-1}w_{1n}\Big)dx,
\end{align}
and
\begin{align}\label{4.58}
	\inte w_{2n}\hat \psi_ndx
	=&\frac{1}{2\mu_{2n}}\inte \mathcal{L}_{2n}\hat\psi_n\Big[(x-x_n)w_{2n}'+\frac{1}{p_n-1}w_{2n}\Big]dx.
\end{align}
One can calculate from \eqref{AA3.5} and \eqref{4.36} that  for some $\delta\in(0, 1)$,
\begin{align*}
	&\inte \mathcal{L}_{1n}\hat\phi_n\Big[xw_{1n}'+\frac{1}{p_n-1}w_{1n}\Big]dx\\
	=&\inte \big[(u_{1n}^2+u_{2n}^2)^{p_n-1}-w_{1n}^{2(p_n-1)}\big]\hat\phi_n\Big(xw_{1n}'+\frac{1}{p_n-1}w_{1n}\Big)dx\\
	&+\inte  F_{1n}(x)\Big(xw_{1n}'+\frac{1}{p_n-1}w_{1n}\Big)dx\\
	&+K_n\inte \big[(u_{1n}^2+u_{2n}^2)^{p_n-1}-w_{2n}^{2(p_n-1)}\big]w_{2n}\Big(xw_{1n}'+\frac{1}{p_n-1}w_{1n}\Big)dx\\
	&-b_n^2(\mu_{1n}-\mu_{2n})\inte w_{1n}\Big(xw_{1n}'+\frac{1}{p_n-1}w_{1n}\Big)dx
	+o(|\mu_{1n}-\mu_{2n}|)\\
	 =&-b_n^2(\mu_{1n}-\mu_{2n})\frac{3-p_n}{2(p_n-1)}\|w_{1n}\|_2^2+o\Big(e^{-(1+\delta)\sqrt{|\mu_{2n}|}x_n}+|\mu_{1n}-\mu_{2n}|+\|\hat\phi_n\|_\infty\Big)
\end{align*}
as $n\to\infty$, where the function $F_{1n}$ is as in \eqref{O1}. This then yields from  \eqref{4.57} that
\begin{align}\label{4.59}
	&\inte w_{1n}\hat \phi_ndx=
	4b_n^2(\mu_{1n}-\mu_{2n})+o\Big(e^{-\sqrt{|\mu_{2n}|}x_n}+|\mu_{1n}-\mu_{2n}|\Big)\ \ \ \text{as}\ \ n\to\infty.
\end{align}
Similarly, we have
\begin{align}\label{4.60}
	&\inte w_{2n}\hat \psi_ndx=
	-4a_n^2(\mu_{1n}-\mu_{2n})+o\Big(e^{-\sqrt{|\mu_{2n}|}x_n}+|\mu_{1n}-\mu_{2n}|\Big)\ \ \ \text{as}\ \ n\to\infty.
\end{align}

Moreover, one can check from \eqref{phi}, \eqref{4.42},  \eqref{4.44a} \eqref{4.59} and \eqref{4.60} that
\begin{align*}
8(\mu_{2n}-\mu_{1n})=&-2\inte \big(w_{1n}\hat\phi_n-w_{2n}\hat\psi_n\big)dx+o(\mu_{2n}-\mu_{1n})\\
=&16a_n^2(\mu_{2n}-\mu_{1n})+o(\mu_{2n}-\mu_{1n}),
\end{align*}
which  implies that $\lim\limits_{n\to\infty}b_n^2=\lim\limits_{n\to\infty}a_n^2=1/2$. Note from \eqref{3.5} that  $-\lim\limits_{n\to\infty}b_n=\lim\limits_{n\to\infty}a_n\in(0, \sqrt{2}/2]$. We thus deduce that
$-\lim\limits_{n\to\infty}b_n=\lim\limits_{n\to\infty}a_n=\sqrt{2}/2$, which further yields that
\begin{align*}
	K_n=&a_n\sqrt{1-b_n^2}+b_n\sqrt{1-a_n^2}\\
	=&(a_n+b_n)\Big[\sqrt{1-b_n^2}+b_n\frac{b_n-a_n}{\sqrt{1-a_n^2}+\sqrt{1-b_n^2}}\Big]\\
	=&\sqrt{2}(a_n+b_n)[1+o(1)]\ \ \ \text{as}\ \ n\to\infty.
\end{align*}
Finally, we derive  from \eqref{win}, \eqref{4.36a} and \eqref{4.63}  that
\begin{align*}
K_n=&-[1+o(1)]\inte w_{1n}w_{2n}dx=-\frac{1+o(1)}{2}\int_0^{x_n}e^{-\sqrt{|\mu_{2n}|}x}e^{-\sqrt{|\mu_{2n}|}(x_n-x)}dx\\
=&-\frac{1+o(1)}{2} x_ne^{-\sqrt{|\mu_{2n}|}x_n}\ \ \ \text{as}\ \ n\to\infty.
\end{align*}
This therefore completes the proof of Lemma \ref{a}. \qed

\section{Refined Estimates of $x_n$ as $n\to\infty$}
In this section we first analyze the refined estimate of the global maximal point $x_n>0$ defined in (\ref{1.6K}), based on which we then complete in Subsection 4.1 the proofs of Theorem  \ref{thm1.1} and Proposition \ref{prop1.2}.

Applying Lemma \ref{a}, we begin with the following estimates of $x_n$ as $p_n \to 2$.

\begin{lem}\label{N24}
Let the point $x_n>0$ be as in \eqref{3.2}. Then we have
$$\lim\limits_{n\to\infty}(2-p_n)x_n=0.$$
\end{lem}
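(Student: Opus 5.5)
The plan is to argue by contradiction. The quantity $(2-p_n)x_n$ should appear as the \emph{coefficient} of a term that must balance exponentially small interaction terms of size $O(x_ne^{-\sqrt{|\mu_{2n}|}x_n})$, which are controlled by Lemma \ref{a}. Suppose, up to a subsequence, that $|2-p_n|x_n\ge c_0>0$ for all $n$. Only the general facts $p_n\to 2$, $\mu_{in}\to -1/16$ and $x_n\to\infty$ from Proposition \ref{J1} are used, together with the decomposition \eqref{4.10} and the bounds of Lemma \ref{a}. In particular I use $\|\hat\phi_n\|_\infty+\|\hat\psi_n\|_\infty\le Ce^{-\sqrt{|\mu_{2n}|}x_n}$, $|\mu_{2n}-\mu_{1n}|\le Ce^{-\sqrt{|\mu_{2n}|}x_n}$ and $K_n=O(x_ne^{-\sqrt{|\mu_{2n}|}x_n})$. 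This lemma is only a crude preliminary version of the sharper asymptotics $(2-p_n)x_n^2\to 48$ announced in the introduction.

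\emph{Step 1: two exact identities on a half-line.} Write $\rho_n:=u_{1n}^2+u_{2n}^2$ and multiply the $i$-th equation of \eqref{x4} by $u_{in}$, and separately by $xu_{in}'$. Integrate over $(-\infty,x_n/2]$ and sum over $i=1,2$. This yields a Nehari-type identity and a Pohozaev-type identity for the ``left bump''. They involve $\int_{-\infty}^{x_n/2}\sum_i|u_{in}'|^2$, $\int_{-\infty}^{x_n/2}\rho_n^{p_n}$ and $\sum_i\mu_{in}\int_{-\infty}^{x_n/2}u_{in}^2$, up to boundary terms at $x=x_n/2$. By \eqref{e} and Lemma \ref{a}, these boundary terms are $O(x_ne^{-\sqrt{|\mu_{2n}|}x_n/2})\cdot e^{-\sqrt{|\mu_{2n}|}x_n/2}=O(x_ne^{-\sqrt{|\mu_{2n}|}x_n})$. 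Eliminating the kinetic term gives a relation of the form
\[
\frac{3-p_n}{2p_n}\int_{-\infty}^{x_n/2}\rho_n^{p_n}dx+\sum_{i=1}^2\mu_{in}\int_{-\infty}^{x_n/2}u_{in}^2dx=O\big(x_ne^{-\sqrt{|\mu_{2n}|}x_n}\big),
\]
and the analogous relation on $[x_n/2,\infty)$.

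\emph{Step 2: compare with the exact scalar soliton.} Insert the expansion \eqref{4.10}, rotated back through \eqref{4.6}, and the identity \eqref{4.27a} for $\rho_n$. The exact soliton $w_{1n}$ satisfies the same Pohozaev relation on $\R$ with vanishing right-hand side. Subtracting, the leading terms cancel. What remains is (a) cross terms linear in $K_n$, of the form $K_n\int w_{1n}^{2p_n-1}w_{2n}$ and $K_n\int w_{1n}w_{2n}$, and (b) the mismatch coming from $\|w_{1n}\|_2^2\ne 1$. The key observation is that the mass of $w_{1n}$ is $\|w_{1n}\|_2^2=c_{p_n}|\mu_{1n}|^{\frac{3-p_n}{2(p_n-1)}}$, while the Pohozaev coefficient produces $|\mu_{1n}|$ with exponent $\frac12$ at $p=2$. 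At $p_n\ne2$ the cross terms therefore do not cancel: they come with a factor $(2-p_n)$. Moreover the region $[0,x_n/2]$ contributes the linear weight $\int_0^{x_n/2}w_{1n}w_{2n}\sim x_ne^{-\sqrt{|\mu_{2n}|}x_n}$. In this way a term of size $(2-p_n)x_n\cdot e^{-\sqrt{|\mu_{2n}|}x_n}$ (possibly multiplied by an extra $x_n$) should appear against remainders of order $e^{-\sqrt{|\mu_{2n}|}x_n}$. Those remainders include $\tfrac14e^{-\sqrt{|\mu_{1n}|}x_n}$ from \eqref{3.51} and the multiplier terms $a_n\sqrt{1-b_n^2}(\mu_{2n}-\mu_{1n})$ from \eqref{4.43b}. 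Dividing by $e^{-\sqrt{|\mu_{2n}|}x_n}$ would then give $|2-p_n|x_n\le o(1)$, contradicting the assumption.

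\emph{Main obstacle.} The hard part is Step 2, namely isolating exactly the $(2-p_n)$-dependent term. There are two concerns. First, I must check that the $O(1)$ coefficients coming from \eqref{4.43b}, \eqref{ab1} and \eqref{4.59}--\eqref{4.60} cancel identically at $p=2$; this is consistent with the nonexistence of minimizers for $J_2(2)$. Second, I must check that under the contradiction hypothesis the estimates of Lemma \ref{O2} remain valid. They involve factors such as $w_{1n}^{2p_n-4}\sim e^{(2-p_n)\sqrt{|\mu_{1n}|}|x|/1}$, which are no longer $1+o(1)$ on $[0,x_n]$ once $(2-p_n)x_n\not\to0$. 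I would first redo \eqref{4.25b} and \eqref{4.44b} keeping these factors explicit, as $e^{\pm(2-p_n)\sqrt{|\mu_{in}|}x}$, rather than absorbing them. If they grow, they themselves should produce the contradiction: they would make $\|F_{in}\|_\infty$ larger than the bound $Ce^{-\sqrt{|\mu_{2n}|}x_n}$ forced by \eqref{phi}, through the equation \eqref{AA3.5} tested against $w_{1n}$. If this direct route fails, I would fall back on the global identity $\mu_{1n}+\mu_{2n}=\frac{p_n+1}{3-p_n}J_2(p_n)$ from \eqref{2.14}, comparing it with $2J_1(p_n)$ evaluated on masses $\|w_{in}\|_2^2$.
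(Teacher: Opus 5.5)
There is a genuine gap. Your argument works at first order $e^{-\sqrt{|\mu_{2n}|}x_n}$, and at that order $(2-p_n)x_n$ is invisible.

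\begin{itemize}
\item The tails of $w_{in}$ decay at rate $\sqrt{|\mu_{in}|}$ regardless of $p_n$. So every first-order quantity carries only correction factors $1+O(p_n-2)$. This covers $\int w_{1n}w_{2n}\,dx$, $\int w_{1n}w_{2n}^{2p_n-1}dx$, and hence $K_n$ and $\mu_{2n}-\mu_{1n}$ in Lemma \ref{a}.
\item The mass-exponent mismatch you invoke in Step 2 likewise only produces $(2-p_n)$ times such a quantity.
\item Even granting Step 2, a relation $c\,(2-p_n)x_ne^{-\sqrt{|\mu_{2n}|}x_n}=O(e^{-\sqrt{|\mu_{2n}|}x_n})$ yields only $(2-p_n)x_n=O(1)$. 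That is compatible with your hypothesis $|2-p_n|x_n\ge c_0$. You would need the $O(1)$ coefficients to cancel, which you flag but do not show.
\item Your Step 1 boundary terms are bounded only by $O(x_ne^{-\sqrt{|\mu_{2n}|}x_n})$. This exceeds the term you want to isolate by the factor $|2-p_n|^{-1}\to\infty$. Estimated this way, the half-line Nehari/Pohozaev identities cannot detect $(2-p_n)x_n$ at all.
\item The fallback routes in your last paragraph are not arguments.
\end{itemize}

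The missing idea is to go to second order $e^{-2\sqrt{|\mu_{2n}|}x_n}$. There $p_n$ does enter, through the decay rate $(2p_n-2)\sqrt{|\mu_{2n}|}$ of the nonlinear factor $w^{2p_n-2}$. The paper tests \eqref{AA3.5} against $w_{1n}'$, the kernel direction of $\mathcal{L}_{1n}$. After subtracting $2(p_n-1)\int w_{1n}^{2p_n-2}w_{1n}'\hat\phi_n\,dx$ from both sides, the left side is $O(e^{-2\sqrt{|\mu_{2n}|}x_n})$. Since $\int w_{1n}w_{1n}'\,dx=0$, all first-order contributions on the right drop out.

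This gives \eqref{N7}: $A_n+B_n+C_n=O(e^{-p_n\sqrt{|\mu_{2n}|}x_n}+e^{-2\sqrt{|\mu_{2n}|}x_n})$. By Lemma \ref{a}, both $B_n$ and $C_n$ equal $\frac{1+o(1)}{32}x_ne^{-2\sqrt{|\mu_{2n}|}x_n}$, which is linear in $x_n$. By contrast, $A_n$ contains $\int_0^{x_n}e^{(4-2p_n)\sqrt{|\mu_{2n}|}x}dx$ and equals
$-\frac{1+o(1)}{16}\,\frac{1-e^{-T}}{T}\,x_ne^{-2\sqrt{|\mu_{2n}|}x_n}$, with $T=(p_n-2)\sqrt{|\mu_{2n}|}x_n$.

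Matching these reduces to $e^{t}-1-t=o(1)$ with $t=(2-p_n)\sqrt{|\mu_{2n}|}x_n\le0$ when $p_n\searrow2$. When $p_n\nearrow2$ it reduces to $e^{s}-1-se^{s}=o(1)$ with $s=(p_n-2)\sqrt{|\mu_{2n}|}x_n\le0$. Both functions vanish on $(-\infty,0]$ only at $0$, so the conclusion follows directly, with no contradiction hypothesis. This $o(1)$ conclusion comes from an exponential-versus-linear mismatch, which a first-order mechanism linear in $(2-p_n)$ cannot produce.

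Your worry about Lemma \ref{O2} is unnecessary. Its bounds, for instance $w_{1n}^{2p_n-3}w_{2n}^2\le Ce^{-\frac{2p_n-1}{2}\sqrt{|\mu_{2n}|}x_n}$ on $[0,x_n/2]$, require nothing about $(2-p_n)x_n$, and therefore neither do Lemmas \ref{NL1} and \ref{a}.
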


\noindent \textbf{Proof.}  Let the functions $h_{1n}$ and  $h_{2n}$ be as in \eqref{4.35} and \eqref{4.35a}, respectively. Using Taylor's expansion, one can calculate from \eqref{4.27a}  and \eqref{phi} that  there exist $\theta_{1n}, \theta_{2n}\in(0, 1)$ and a sufficiently small constant $\varepsilon\in(0, 1/2)$ such that
\begin{align*}
&\int_{x_n/2}^{3x_n/2}w_{1n}w_{1n}'(u_{1n}^2+u_{2n}^2)^{p_n-1}dx
=\int_{x_n/2}^{3x_n/2}w_{1n}w_{1n}'(w_{2n}^2+h_{1n})^{p_n-1}dx\\
=&\int_{x_n/2}^{3x_n/2}w_{1n}w_{1n}'\Big\{w_{2n}^{2p_n-2}+(p_n-1)\big[w_{2n}^2+\theta_{1n}h_{1n}\big]^{p_n-2}h_{1n}\Big\}dx\\
=&\int_{x_n/2}^{3x_n/2}w_{1n}w_{1n}'\Big\{w_{2n}^{2p_n-2}+(p_n-1)\big[1+\theta_{1n}h_{1n}/w_{2n}^2\big]^{p_n-2}
w_{2n}^{2p_n-4}h_{1n}\Big\}dx\\
=&\int_{x_n/2}^{3x_n/2}w_{1n}w_{1n}'\Big\{w_{2n}^{2p_n-2}+\big[1+O(p_n-2)\big]w_{2n}^{2p_n-4}w_{1n}^2\Big\}dx+o\big(e^{-(2+\varepsilon)\sqrt{|\mu_{2n}|}x_n}\big)
\end{align*}
as $n\to\infty$, and
\begin{align*}
&\int_{-3x_n/4}^{x_n/2} w_{1n}w_{1n}'\big[(u_{1n}^2+u_{2n}^2)^{p_n-1}-w_{1n}^{2p_n-2}\big]dx\\
=&(p_n-1)\int_{-3x_n/4}^{x_n/2}w^{2p_n-3}_{1n}w_{1n}'h_{2n}dx\\
&+(p_n-1)(p_n-2)\int_{-3x_n/4}^{x_n/2}(w_{1n}^2+\theta_{2n}h_{2n})^{p_n-3}w_{1n}w_{1n}'h_{2n}^2dx\\
=&(p_n-1)\int_{-3x_n/4}^{x_n/2}w^{2p_n-3}_{1n}w_{1n}'h_{2n}dx+O\big((p_n-2)e^{-2\sqrt{|\mu_{2n}|}x_n}\big)
\end{align*}
as $n\to\infty$.
These then yield  that for the above sufficiently  small $\varepsilon\in(0, 1/2)$,
\begin{align}\label{4.70b}
&\inte \Big\{w_{1n}w_{1n}'\big[(u_{1n}^2+u_{2n}^2)^{p_n-1}-w_{1n}^{2p_n-2}\big]-2(p_n-1) w_{1n}^{2p_n-2}w_{1n}'\hat\phi_n\Big\}dx\nonumber\\
=&\int_{-3x_n/4}^{3x_n/2} \Big\{w_{1n}w_{1n}'\big[(u_{1n}^2+u_{2n}^2)^{p_n-1}-w_{1n}^{2p_n-2}\big]-2(p_n-1) w_{1n}^{2p_n-2}w_{1n}'\hat\phi_n\Big\}dx\nonumber\\
&+o\big(e^{-(2+\varepsilon)\sqrt{|\mu_{2n}|}x_n}\big)\nonumber\\
=&\int_{x_n/2}^{3x_n/2}w_{1n}w_{1n}'\Big\{w_{2n}^{2p_n-2}+\big[1+O(p_n-2)\big]w_{2n}^{2p_n-4}w_{1n}^2\Big\}dx-\int_{x_n/2}^{3x_n/2}w_{1n}^{2p_n-1}w_{1n}'dx\nonumber\\
&+(p_n-1)\int_{-3x_n/4}^{x_n/2}w^{2p_n-3}_{1n}w_{1n}'h_{2n}dx-2(p_n-1)\int_{-3x_n/4}^{x_n/2}w_{1n}^{2p_n-2}w_{1n}'\hat\phi_ndx\\
&+O\big((p_n-2)e^{-2\sqrt{|\mu_{2n}|}x_n}\big)\nonumber\\
=&\int_{x_n/2}^{3x_n/2}w_{1n}w_{1n}'w_{2n}^{2p_n-2}dx
+(p_n-1)\int_{-3x_n/4}^{x_n/2}w^{2p_n-3}_{1n}w_{1n}'\big[w_{2n}^2+2w_{2n}\hat\psi_n+2K_nw_{1n}w_{2n}\big]dx\nonumber\\
&+O\big(e^{-p_n\sqrt{|\mu_{2n}|}x_n}+e^{-2\sqrt{|\mu_{2n}|}x_n}\big)\ \ \ \text{as}\ \ n\to\infty.\nonumber
\end{align}
Similarly, we derive that
\begin{align}\label{4.70c}
&\inte w_{1n}'w_{2n}\big[(u_{1n}^2+u_{2n}^2)^{p_n-1}-w_{2n}^{2p_n-2}\big]dx\nonumber\\
=&\int_{-x_n/2}^{x_n/2}w_{1n}'w_{2n}\Big\{w_{1n}^{2p_n-2}+\big[1+o(1)\big](p_n-1)w_{1n}^{2p_n-4}h_{2n}\Big\}dx\\
&+[1+o(1)](p_n-1)\int_{x_n/2}^{3x_n/2}w_{1n}'w_{2n}^{2p_n-3}h_{1n}dx+o\big(e^{-\frac{5}{4}\sqrt{|\mu_{2n}|}x_n}\big)\nonumber\\
=&\int_{-x_n/2}^{x_n/2}w_{1n}'w_{2n}w_{1n}^{2p_n-2}dx+o\big(e^{-\frac{5}{4}\sqrt{|\mu_{2n}|}x_n}\big)\ \ \ \text{as}\ \ n\to\infty.\nonumber
\end{align}
Hence, multiplying both hand sides of \eqref{AA3.5} by $w'_{1n}$ and integrating over $\R$, we  conclude from \eqref{AA3.1}, \eqref{phi}, \eqref{4.70b} and \eqref{4.70c} that  
\begin{align*}
&\inte LHS\ \text{of}\  \eqref{AA3.5} \times w'_{1n}dx-2(p_n-1)\inte  w_{1n}^{2p_n-2}w_{1n}'\hat \phi_ndx\\
=&\inte\big[w_{1n}^{2p_n-2}- (u_{1n}^2+u_{2n}^2)^{p_n-1}\big]w_{1n}'\hat \phi_ndx
=O\big(e^{-2\sqrt{|\mu_{2n}|}x_n}\big)\ \ \ \text{as}\ \ n\to\infty,
\end{align*}
and
\begin{align}\label{N4}
&\inte RHS \ \text{of}\ \eqref{AA3.5} \times w'_{1n}dx-2(p_n-1)\inte  w_{1n}^{2p_n-2}w_{1n}'\hat \phi_ndx\nonumber\\
=&\inte w_{1n}w_{1n}'\big[(u_{1n}^2+u_{2n}^2)^{p_n-1}-w_{1n}^{2p_n-2}\big]dx-2(p_n-1)\inte  w_{1n}^{2p_n-2}w_{1n}'\hat \phi_ndx\nonumber\\
&+\inte K_nw_{1n}'w_{2n}\big[(u_{1n}^2+u_{2n}^2)^{p_n-1}-w_{2n}^{2p_n-2}\big]dx\nonumber\\
&+a_n\sqrt{1-b_n^2}(\mu_{1n}-\mu_{2n})\inte w_{2n}w_{1n}'dx -b_n(\mu_{1n}-\mu_{2n})\inte w_{1n}'\psi_ndx\\
=&\int_{x_n/2}^{3x_n/2}w_{1n}w_{1n}'w_{2n}^{2p_n-2}dx
+(p_n-1)\int_{-3x_n/4}^{x_n/2}w^{2p_n-3}_{1n}w_{1n}'w_{2n}^2dx\nonumber\\
&+(p_n-1)2K_n\int_{-3x_n/4}^{x_n/2}w_{1n}'w_{2n}w^{2p_n-2}_{1n}dx+K_n\int_{-x_n/2}^{x_n/2}w_{1n}'w_{2n}w_{1n}^{2p_n-2}dx\nonumber\\
&+a_n\sqrt{1-b_n^2}(\mu_{1n}-\mu_{2n})\inte w_{2n}w_{1n}'dx +O\big(e^{-p_n\sqrt{|\mu_{2n}|}x_n}+e^{-2\sqrt{|\mu_{2n}|}x_n}\big)\ \ \ \text{as}\ \ n\to\infty,\nonumber
\end{align}
which thus  imply that
\begin{align}\label{N7}
&O\Big(e^{-p_n\sqrt{|\mu_{2n}|}x_n}+e^{-2\sqrt{|\mu_{2n}|}x_n}\Big) \nonumber\\
=&\int_{x_n/2}^{3x_n/2}w_{1n}w_{1n}'w_{2n}^{2p_n-2}dx
+(p_n-1)\int_{-3x_n/4}^{x_n/2}w^{2p_n-3}_{1n}w_{1n}'w_{2n}^2dx\\
&+(p_n-1)2K_n\int_{-3x_n/4}^{x_n/2}w_{1n}'w_{2n}w^{2p_n-2}_{1n}dx+K_n\int_{-x_n/2}^{x_n/2}w_{1n}'w_{2n}w_{1n}^{2p_n-2}dx\nonumber\\
&+a_n\sqrt{1-b_n^2}(\mu_{1n}-\mu_{2n})\inte w_{2n}w_{1n}'dx \nonumber\\
:=&{A}_n+{B}_n+{C}_n\ \ \ \text{as}\ \ n\to\infty.\nonumber
\end{align}
We next estimate ${A}_n, {B}_n$ and ${C}_n$ as $n\to\infty$.

By direct calculations, it yields from \eqref{win} and \eqref{4.63}  that
\begin{align}\label{4.74a}
A_n=&\int_{x_n/2}^{x_n}w_{1n}w_{1n}'w_{2n}^{2p_n-2}dx
+(p_n-1)\int_{0}^{x_n/2}w^{2p_n-3}_{1n}w_{1n}'w_{2n}^2dx+O(e^{-2\sqrt{|\mu_{2n}|}x_n})\nonumber\\
=&-\frac{1+o(1)}{16}\int_{x_n/2}^{x_n}e^{-2\sqrt{|\mu_{2n}|}x}e^{-(2p_n-2)\sqrt{|\mu_{2n}|}(x_n-x)}dx\nonumber\\
&-\frac{1+o(1)}{16}\int_0^{x_n/2}e^{-(2p_n-2)\sqrt{|\mu_{2n}|}x}e^{-2\sqrt{|\mu_{2n}|}(x_n-x)}dx+O(e^{-2\sqrt{|\mu_{2n}|}x_n})\\
=&-\frac{1+o(1)}{16(p_n-2)\sqrt{|\mu_{2n}|}}\Big(e^{-2\sqrt{|\mu_{2n}|}x_n}-e^{-p_n\sqrt{|\mu_{2n}|}x_n}\Big)\ \ \ \text{as}\ \ n\to\infty,\nonumber
\end{align}
and
\begin{equation}\label{4.6a}
\begin{split}
&\lim\limits_{n\to\infty}\int_{-3x_n/4}^{x_n/2} e^{\sqrt{|\mu_{2n}|}x_n}w_{1n}^{2p_n-2}w_{1n}'w_{2n}dx\\
=&\inte \lim\limits_{n\to\infty}e^{\sqrt{|\mu_{2n}|}x_n}w_{1n}^{2p_n-2}w_{1n}'w_{2n}dx=-1/48,
\end{split}
\end{equation}
and
\begin{equation}\label{4.7}
\begin{split}
\inte w_{2n}w_{1n}'=&-\frac{1+o(1)}{8}\int_0^{x_n}e^{-\sqrt{|\mu_{2n}|}(x_n-x)}e^{-\sqrt{|\mu_{2n}|}x}dx+O\big(e^{-\sqrt{|\mu_{2n}|}x_n}\big)\nonumber\\
=&-\frac{1+o(1)}{8}x_ne^{-\sqrt{|\mu_{2n}|}x_n}\ \ \ \text{as}\ \ n\to\infty.
\end{split}
\end{equation}
Applying Lemma \ref{a}, we then deduce from \eqref{4.6a} and \eqref{4.7} that
\begin{eqnarray}\label{4.70a}
\left\{
\begin{array}{lll}
	\!	\!	\!
{B}_n=\displaystyle\frac{1+o(1)}{32}x_ne^{-2\sqrt{|\mu_{2n}|}x_n}\ \ \ \text{as}\ \ n\to\infty,\\[2mm]
	\!	\!	\!
{C}_n=\displaystyle\frac{1+o(1)}{32}x_ne^{-2\sqrt{|\mu_{2n}|}x_n}\ \ \ \text{as}\ \ n\to\infty,
\end{array}	
\right.
\end{eqnarray}
which further yields  from \eqref{N7} and \eqref{4.74a} that
\begin{align*}
\left\{
\begin{array}{lll}
	\!	\!	\!
	&\!	\!	\!\!	\!	\! \displaystyle\frac{1}{(p_n-2)\sqrt{|\mu_{2n}|}}\Big(1-e^{(2-p_n)\sqrt{|\mu_{2n}|}x_n}\Big)-x_n=O(1)
	\ \ \ \text{as}\ \  p_n\searrow2,\\[3.5mm]
	\!	\!	\! &\!	\!	\!\!	\!	\! \displaystyle\frac{1}{(p_n-2)\sqrt{|\mu_{2n}|}}\Big(e^{(p_n-2)\sqrt{|\mu_{2n}|}x_n}-1\Big) -x_ne^{(p_n-2)\sqrt{|\mu_{2n}|}x_n}=O(1)
	\ \ \ \text{as}\ \  p_n\nearrow2,
\end{array}	
\right.
\end{align*}
and thus
\begin{align*}
	\left\{
	\begin{array}{lll}
\!	\!	\!
&\!	\!	\!\!	\!	\! e^{(2-p_n)\sqrt{|\mu_{2n}|}x_n}-1-(2-p_n)\sqrt{|\mu_{2n}|}x_n=o(1)
\ \ \ \text{as}\ \  p_n\searrow2,\\[3mm]
\!	\!	\! &\!	\!	\!\!	\!	\! e^{(p_n-2)\sqrt{|\mu_{2n}|}x_n}-1-(p_n-2)\sqrt{|\mu_{2n}|}x_ne^{(p_n-2)\sqrt{|\mu_{2n}|}x_n}=o(1)
\ \ \ \text{as}\ \  p_n\nearrow2.
\end{array}	
\right.
\end{align*}
Since both equations $e^t-1-t=0$ and $e^t-1-te^t=0$ have  exactly  one solution $t=0$ in $(-\infty, 0]$, we derive that $\lim\limits_{n\to\infty}(p_n-2)\sqrt{|\mu_{2n}|}x_n=0$, which hence proves Lemma \ref{N24}. \qed

Following Lemma \ref{N24}, we are next concerned with the following refined estimate of $x_n$ as $n\to\infty$.

\begin{lem}\label{lem4.7}
Under the assumptions of Lemma \ref{N24}, the point $x_n>0$ satisfies
$$\lim\limits_{n\to\infty}(2-p_n)x_n^2=48.$$
\end{lem}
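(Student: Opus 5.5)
The plan is to push the Pohozaev-type identity \eqref{N7} one order further. In the proof of Lemma \ref{N24} the leading terms of order $x_ne^{-2\sqrt{|\mu_{2n}|}x_n}$ cancel, and every remainder was only controlled as $O(e^{-2\sqrt{|\mu_{2n}|}x_n})$. I will instead compute every contribution at order $e^{-2\sqrt{|\mu_{2n}|}x_n}$ with its exact constant. The quantity $(2-p_n)x_n^2$ should then appear as the coefficient of the first term that does not cancel.

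Write $s_n:=\sqrt{|\mu_{2n}|}\to 1/4$ and $t_n:=(2-p_n)s_nx_n$, so that $t_n\to0$ by Lemma \ref{N24}.

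\emph{Step 1: expand $A_n$.} Redo \eqref{4.74a}, keeping the exact profiles \eqref{win} rather than only their exponential tails. Then
\[
A_n=-\frac{c_A}{(p_n-2)s_n}\big(e^{-2s_nx_n}-e^{-p_ns_nx_n}\big)+\alpha\, e^{-2s_nx_n}+o(e^{-2s_nx_n}),
\]
where $c_A=\frac1{16}+o(1)$ and $\alpha$ is an explicit constant. Expanding $e^{t_n}-1=t_n+\frac{t_n^2}{2}+O(t_n^3)$ gives
\[
A_n=-c_A\,e^{-2s_nx_n}\Big(x_n+\frac{(2-p_n)s_nx_n^2}{2}+O(t_n^2x_n)\Big)+\alpha\, e^{-2s_nx_n}+o(e^{-2s_nx_n}).
\]

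\emph{Step 2: refine $B_n$, $C_n$ and the remainders.} Refine \eqref{4.70a} to the form $B_n+C_n=\frac{1}{16}x_ne^{-2s_nx_n}+\beta e^{-2s_nx_n}+o(e^{-2s_nx_n})$. This requires three inputs.
\begin{itemize}
\item Sharpen \eqref{4.43b} and \eqref{ab1} so that the relative errors $o(1)$ become $O(1/x_n)$. The constant term of $\int w_{1n}w_{2n}\,dx$ and of \eqref{4.7}, beyond $x_ne^{-s_nx_n}$, must be computed.
\item Note that $\sqrt{|\mu_{1n}|}-\sqrt{|\mu_{2n}|}=O(e^{-s_nx_n})$ by \eqref{4.63}. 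Hence replacing $\mu_{1n}$ by $\mu_{2n}$ in the exponents costs only $O(x_ne^{-s_nx_n})$ relatively, which is negligible.
\item Identify exactly the $O(e^{-2s_nx_n})$ terms that were discarded in \eqref{4.70b}, \eqref{4.70c} and \eqref{N4}. These are the terms involving $\hat\phi_n,\hat\psi_n$ paired against $w_{1n}'$, and the $\phi_n^2+\psi_n^2$ pieces. For these I will use the orthogonality conditions \eqref{3.7}--\eqref{3.8} and the projections \eqref{4.59}--\eqref{4.60}. These explicit integral evaluations are presumably the content of Lemmas \ref{lemA.1} and \ref{lemA.2} in the Appendix, which I would invoke here.
\end{itemize}

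\emph{Step 3: conclude.} Substitute Steps 1 and 2 into \eqref{N7}. The terms $x_ne^{-2s_nx_n}$ cancel since $c_A\to\frac1{16}$, but this cancellation must hold up to $o(1)$ in absolute size, so the $o(1)$ in $c_A$ has to be shown to be $O(1/x_n)$. Dividing by $e^{-2s_nx_n}$ then gives
\[
\frac{1}{32}(2-p_n)s_nx_n^2=\gamma+o(1),
\]
where $\gamma$ collects $\alpha$, $\beta$ and the remainder constants. Since $s_n\to\frac14$, this yields $(2-p_n)x_n^2\to128\gamma$, and the bookkeeping must produce $\gamma=\frac38$. As a consistency check, $\gamma>0$ forces $p_n<2$, which is the mechanism behind Theorem \ref{thm1.1}.

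The main obstacle is Step 2. Every correction of size $e^{-2s_nx_n}$ must be tracked with its sharp constant: the $O(1/x_n)$ corrections in Lemma \ref{a}, the nonlocal contributions of $\hat\phi_n,\hat\psi_n$, and the $(p_n-1)$-dependent factors such as those in \eqref{4.6a}. I would first try to reduce all of these to explicit integrals of $w_*$ and its powers on $\R$, using the limiting profile and dominated convergence as in \eqref{3.51}. Only the genuinely interactive integrals over $[0,x_n]$ would be kept in exponential form.
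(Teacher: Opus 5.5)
Your overall route is the paper's. You test \eqref{AA3.5} against $w_{1n}'$, push the resulting identity to order $e^{-2s_nx_n}$ (with your $s_n=\sqrt{|\mu_{2n}|}$), let the $x_ne^{-2s_nx_n}$ terms cancel, and read off $(2-p_n)x_n^2$ from the expansion of $A_n$. Your Step 1 and the $B_n$, $C_n$ part of Step 2 are what Lemma \ref{lemA.1} does. There the relative errors come out as $O(p_n-2)$, which suffices since $(p_n-2)x_n\to0$, and the result is $A_n+B_n+C_n=\big[\tfrac14-\tfrac{(2-p_n)x_n^2}{128}+o(1)\big]e^{-2s_nx_n}$.

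The gap is in the remaining terms of the identity:
\begin{itemize}
\item $D_n=\int_{\R}[(u_{1n}^2+u_{2n}^2)^{p_n-1}-w_{1n}^{2p_n-2}]w_{1n}'\hat\phi_n\,dx$;
\item $E_n=2(p_n-1)\int w_{1n}'w_{1n}^{2p_n-3}w_{2n}\hat\psi_n\,dx$;
\item $F_n=-b_n(\mu_{1n}-\mu_{2n})\int_{\R}w_{1n}'\psi_n\,dx$;
\item the quadratic term $G_n$.
\end{itemize}
The tools you name for these are the orthogonality conditions \eqref{3.7}--\eqref{3.8} and the projections \eqref{4.59}--\eqref{4.60}. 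They only give a few moments of $\hat\phi_n,\hat\psi_n$, against $w_{in}$ and $w_{in}'$. The weights needed here are not in that span: $w_{2n}^{2p_n-2}w_{1n}'$ concentrated near $x_n$, $w_{1n}'w_{1n}^{2p_n-3}w_{2n}$ near $0$, and $w_{1n}'$ paired with $\hat\psi_n$. Showing $G_n=o(e^{-2s_nx_n})$ also needs a parity property of $\hat\phi_n^2+\hat\psi_n^2$ near $0$. These evaluations are not in the Appendix either: Lemma \ref{lemA.1} treats only $A_n,B_n,C_n$, and Lemma \ref{lemA.2} serves Lemma \ref{lem5.1}.

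The missing idea is a second-order profile of the error terms.
\begin{itemize}
\item By \eqref{phi}, $e^{s_nx_n}\hat\psi_n$ and $e^{s_nx_n}\hat\phi_n$ are bounded.
\item Pass to the limit in \eqref{AA3.6} and \eqref{AA3.5} on $B_{x_n/4}(0)$, using $(2-p_n)x_n\to0$ and Lemma \ref{a} to identify the sources. This gives $(-D_{xx}+\frac1{16}-w_*^2)\hat\psi_0=w_*'$ and $(-D_{xx}+\frac1{16}-3w_*^2)\hat\phi_0=\frac14w_*$.
\item Invert these operators and remove the kernel components ($w_*$, resp.\ $w_*'$) using the orthogonality conditions. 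This yields $\hat\psi_0=-\frac{x}{2}w_*$ and $\hat\phi_0=-2(xw_*'+w_*)$, with the analogous formulas \eqref{4.73} near $x_n$.
\end{itemize}
Only with these profiles do $D_n$, $E_n$, $F_n$, $G_n$ come out as $\frac1{16}$, $\frac18$, $-\frac1{16}$ and $0$ times $e^{-2s_nx_n}$, up to $o(e^{-2s_nx_n})$.

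This is not cosmetic. The remainders add up to $\frac18e^{-2s_nx_n}$, and discarding them would give the limit $32$ instead of $48$. Your Step 3 also does not compute $\gamma$; it sets $\gamma=\frac38$ by working backwards from the stated answer. As written, the proposal does not prove the lemma.
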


\noindent \textbf{Proof.} We first claim that  for sufficiently large $n>0$,
\begin{align}\label{4.69a}
\left\{
\begin{array}{lll}
\!	\!	\!
\hat\psi_n(x)=-\frac{x}{2}w_*(x) e^{-\sqrt{|\mu_{2n}|}x_n}+o(e^{-\sqrt{|\mu_{2n}|}x_n})\ \ \ \text{in}\ B_{x_n/4}(0),\\[2mm]
\!	\!	\! \hat\phi_n(x)=-2(xw_*'+w_*) e^{-\sqrt{|\mu_{2n}|}x_n}+o(e^{-\sqrt{|\mu_{2n}|}x_n})\ \ \ \text{in}\,\ B_{x_n/4}(0),
\end{array}	
\right.
\end{align}
and
\begin{align}\label{4.73}
\left\{
\begin{array}{lll}
\!	\!	\!
\hat\psi_n(x)=&\!\!\!2\big[(x-x_n)w_*'(x-x_n)+w_*(x-x_n)\big] e^{-\sqrt{|\mu_{2n}|}x_n}&\\
&+o(e^{-\sqrt{|\mu_{2n}|}x_n})
\ \ \ \text{in}\ B_{x_n/4}(x_n),&\\[2mm]
\!	\!	\!  \hat\phi_n(x)=&\!\!\!\frac{x-x_n}{2}w_*(x-x_n)e^{-\sqrt{|\mu_{2n}|}x_n}+o(e^{-\sqrt{|\mu_{2n}|}x_n})\ \ \ \text{in}\ \, B_{x_n/4}(x_n),&
\end{array}	
\right.
\end{align}
where  $\hat\phi_n$ and $\hat \psi_n$ are given by \eqref{AA3.2}, and $w_*>0$ is as in  \eqref{2.8b}.

Actually, 
one can derive from \eqref{phi} that  for sufficiently large $n>0$,
\begin{align}\label{4.69}
\hat\psi_n(x)=\hat\psi_0(x) e^{-\sqrt{|\mu_{2n}|}x_n}+o\big(e^{-\sqrt{|\mu_{2n}|}x_n}\big)\ \ \ \text{in}\,\ B_{x_n/4}(0),
\end{align}
and
\begin{align}\label{4.69b}
\hat\phi_n(x)=\hat\phi_0(x) e^{-\sqrt{|\mu_{2n}|}x_n}+o\big(e^{-\sqrt{|\mu_{2n}|}x_n}\big)\ \ \ \text{in}\,\ B_{x_n/4}(0)
\end{align}
hold for some functions $\hat\psi_0,\ \hat\phi_0\in L^\infty(\R)$. 
Since $\lim\limits_{n\to\infty}(2-p_n)x_n=0$ and $\mu_{2n}-\mu_{1n}=\frac{1+o(1)}{2}e^{-\sqrt{|\mu_{2n}|}x_n}$ as $n\to\infty$,  one can calculate  that
\begin{align}\label{4.70}
&w_{1n}^{2p_n-3}(x)w_{2n}(x)\nonumber\\
=&\frac{1+o(1)}{2e^{\sqrt{|\mu_{2n}|}x_n}}\frac{1}{1+e^{2\sqrt{|\mu_{2n}|}(x-x_n)}+e^{-2\sqrt{|\mu_{2n}|}x}+e^{-2\sqrt{|\mu_{2n}|}x_n}}\\
=&\frac{1+o(1)}{2e^{\sqrt{|\mu_{2n}|}x_n}}\frac{1}{1+e^{-2\sqrt{|\mu_{2n}|}x}}
=\frac{1+o(1)}{2e^{\sqrt{|\mu_{2n}|}x_n}}\frac{e^{\sqrt{|\mu_{2n}|}x}}{e^{\sqrt{|\mu_{2n}|}x}+e^{-\sqrt{|\mu_{2n}|}x}}\nonumber
\end{align}
uniformly in $B_{x_n/4}(0)$ as $n\to\infty$. Applying \eqref{AA3.6} and Lemma \ref{a}, we thus conclude from \eqref{4.69} and \eqref{4.70} that
\begin{align*}
-\hat\psi_0''+\frac{1}{16}\hat\psi_0-w_*^2\hat\psi_0
=&e^{\sqrt{|\mu_{2n}|}x_n}w_{1n}\Big\{w_{2n}w_{1n}^{2p_n-3}-b_n\sqrt{1-a_n^2}(\mu_{1n}-\mu_{2n})\Big\}+o(1)\\
=&\frac{1}{4}e^{\sqrt{|\mu_{2n}|}x_n}w_{1n}\Big[4w_{2n}w_{1n}^{2p_n-3}-e^{-\sqrt{|\mu_{2n}|}x_n}\Big]+o(1)\\
=&\frac{1+o(1)}{4}w_{1n}\frac{e^{\sqrt{|\mu_{2n}|}x}-e^{-\sqrt{|\mu_{2n}|}x}}{e^{\sqrt{|\mu_{2n}|}x}+e^{-\sqrt{|\mu_{2n}|}x}}+o(1)\\
=&w_{1n}'+o(1)\ \ \text{in}\ B_{x_n/4}(0)\  \ \text{as}\ \, n\to\infty,
\end{align*}
which shows that
\begin{align}\label{4.71}
\mathcal{L}_1\psi_0:=	\Big(-D_{xx}+\frac{1}{16}-w_*^2\Big)\hat\psi_0
=w_*'\ \ \text{in}\,\ \R.
\end{align}
Note that  $$\text{ker}\mathcal{L}_1=\{w_*\},\ \  \ \ \mathcal{L}_1\Big(-\frac{x}{2}w_*\Big)=w_*' \ \ \text{in} \ \, \R.$$
It then follows from \eqref{4.71} that
$\hat\psi_0=-\frac{x}{2}w_*(x)+cw_*(x)$ in $\R$ for some $c\in\R$. It thus yields from \eqref{3.5}, \eqref{3.7} and  \eqref{AA3.2} that
\begin{align*}
0=&\lim\limits_{n\to\infty}e^{\sqrt{|\mu_{2n}|}x_n}\inte \big[-b_n\phi_n+\sqrt{1-b_n^2}\psi_n\big]w_{1n}dx
=\lim\limits_{n\to\infty}e^{\sqrt{|\mu_{2n}|}x_n}\inte \hat \psi_nw_{1n}dx\\
=& \inte \big[-\frac{x}{2}w_*(x)+cw_*(x)\big]w_*dx=c\inte w_*^2dx,
\end{align*}
which further yields that $c=0$. We therefore obtain that
\begin{align}\label{4.72}
\hat\psi_n(x)=-\frac{x}{2}w_*(x) e^{-\sqrt{|\mu_{2n}|}x_n}+o\big(e^{-\sqrt{|\mu_{2n}|}x_n}\big)\ \ \ \text{in}\ B_{x_n/4}(0).
\end{align}
Similarly, one can check from \eqref{AA3.5} and \eqref{4.69b}  that
\begin{align*}
\mathcal{L}_2\hat\phi_0:=-\hat\phi_0''+\frac{1}{16}\hat\phi_0-3w_*^2\hat\phi_0
=&\frac{1}{4}w_*\ \ \text{in}\ \, \R.
\end{align*}
Since ker$\mathcal{L}_2=\{w_*'\}$ and $w_*=-8\mathcal{L}_2\big(xw_*'+w_*\big)$,  we obtain from above that
\begin{align}\label{4.15}
\hat\phi_0=-2(xw_*'+w_*)+cw_*'\ \ \ \text{holds for some}\ \ c\in\R.
\end{align}
Since it yields from \eqref{3.7} and \eqref{AA3.2}
that $\inte \hat \phi_nw'_{1n}dx=0 $, we get from \eqref{4.15} that  $\hat\phi_0(x)=-2\big[xw_*'(x)+w_*(x)\big]$ in $\R$, which therefore implies that the claim \eqref{4.69a} holds true.
Similar to  \eqref{4.69a},  one can deduce from \eqref{3.8} and \eqref{AA3.2} that the claim \eqref{4.73} holds true, and we are done.

Recalling from Lemma \ref{N24} that $\lim\limits_{n\to\infty}(2-p_n)x_n=0$, we have
\begin{align*}
&\int_{x_n/2}^{3x_n/2}\Big[w_{1n}^3w_{1n}'w_{2n}^{2p_n-4}-w_{1n}^{2p_n-1}w_{1n}'\Big]dx\\
=&\frac{1+o(1)}{16}\int_{x_n/2}^{3x_n/2}\Big[ e^{-2p_n\sqrt{|\mu_{2n}|}x}-e^{-4\sqrt{|\mu_{2n}|}x}\Big]dx=o\big(e^{-2\sqrt{|\mu_{2n}|}x_n}\big)\ \ \mbox{as}\,\ n\to\infty.
\end{align*}
Multiplying both hand sides of \eqref{AA3.5} by $w'_{1n}$ and integrating over $\R$,  the same calculations of \eqref{4.70b} and \eqref{N7} then show  that
\begin{align}\label{4.74}
&o\big(e^{-2\sqrt{|\mu_{2n}|}x_n}\big) \nonumber\\
=&A_n+B_n+C_n+\inte\big[(u_{1n}^2+u_{2n}^2)^{p_n-1}-w_{1n}^{2p_n-2}\big]w_{1n}'\hat \phi_ndx\nonumber\\
&+2(p_n-1)\int_{-3x_n/4}^{x_n/2} w_{1n}'w_{1n}^{2p_n-3}w_{2n}\hat\psi_ndx-b_n(\mu_{1n}-\mu_{2n})\inte w_{1n}'\psi_ndx\\
&+(p_n-1)\int_{-3x_n/4}^{x_n/2} w_{1n}'w_{1n}^{2p_n-3}(\phi_n^2+\psi_n^2)dx\nonumber\\
:=&A_n+B_n+C_n+D_n+E_n+F_n+G_n\ \ \ \text{as}\ \ n\to\infty,\nonumber
\end{align}
where  $A_n, B_n$ and  $C_n$ are as in \eqref{N7}. We calculate from \eqref{4.69a} and \eqref{4.73} that for some sufficiently small  $\varepsilon>0$,
\begin{align}\label{4.75}
D_n
=&(p_n-1)\int_{B_{x_n/2}(0)}w_{1n}^{2p_n-4}\big[w_{2n}^2+2w_{1n}\hat\phi_n\big]w_{1n}'\hat \phi_ndx\nonumber\\
&+\int_{B_{x_n/2}(x_n)}w_{2n}^{2p_n-2}w_{1n}'\hat \phi_ndx+o\big(e^{-(2+\varepsilon)\sqrt{|\mu_{2n}|}x_n}\big)\nonumber\\
=&
\int_{B_{x_n/4}(x_n)}w_{2n}^{2p_n-2}w_{1n}'\hat \phi_ndx+o\big(e^{-(2+\varepsilon)\sqrt{|\mu_{2n}|}x_n}\big)\\
=&\frac{e^{-\sqrt{|\mu_{2n}|}x_n}}{2}\inte (x-x_n)\cdotp w_*^3(x-x_n)w_*'(x)dx+o\big(e^{-2\sqrt{|\mu_{2n}|}x_n}\big)\nonumber\\
=&\frac{1}{16}e^{-2\sqrt{|\mu_{2n}|}x_n}+o\big(e^{-2\sqrt{|\mu_{2n}|}x_n}\big)
\ \ \ \text{as}\ \ n\to\infty,\nonumber\\
E_n=&2(p_n-1)\int_{-x_n/4}^{x_n/4} w_{1n}'w_{1n}^{2p_n-3}w_{2n}\hat\psi_ndx+o(e^{-(2+\varepsilon)\sqrt{|\mu_{2n}|}x_n})\nonumber\\
=&-e^{-\sqrt{|\mu_{2n}|}x_n}\inte x\cdotp w_{1n}'w_{1n}^{2p_n-3}w_{2n}w_*dx+o\big(e^{-2\sqrt{|\mu_{2n}|}x_n}\big)\label{4.16}\\
=&\frac{1}{8}e^{-2\sqrt{|\mu_{2n}|}x_n}+o\big(e^{-2\sqrt{|\mu_{2n}|}x_n}\big)
\ \ \ \text{as}\ \ n\to\infty,\nonumber\\
F_n=&-[1+o(1)](\mu_{1n}-\mu_{2n})\inte w_{1n}'\big[b_n^2\hat\phi_n+b_n\sqrt{1-a_n^2}\hat\psi_n\big]dx\label{4.17}\\
=&-\frac{1}{16}e^{-2\sqrt{|\mu_{2n}|}x_n}+o\big(e^{-2\sqrt{|\mu_{2n}|}x_n}\big)
\ \ \ \text{as}\ \ n\to\infty,\nonumber
\end{align}
and
\begin{align}\label{4.77}
G_n=&[1+o(1)]\int_{-3x_n/4}^{x_n/2} w_{1n}'w_{1n}^{2p_n-3}(\hat\phi_n^2+\hat\psi_n^2)dx\nonumber\\
=& [1+o(1)]e^{-2\sqrt{|\mu_{2n}|}x_n}\int_{-3x_n/4}^{x_n/2} w_{1n}'w_{1n}^{2p_n-3}\big[x^2/4+4(xw_*'+w_*)^2\big]dx\\
=&o\big(e^{-2\sqrt{|\mu_{2n}|}x_n}\big)\ \text{as}\ n\to\infty.\nonumber
\end{align}
Applying  Lemma \ref{lemA.1}, we thus deduce from \eqref{4.74}--\eqref{4.77} that
\begin{equation}\label{4.82a}
\begin{split}
&-\frac{1+o(1)}{8}e^{-2\sqrt{|\mu_{2n}|}x_n}=A_n+B_n+C_n\\
=&\frac{1+o(1)}{4}e^{-2\sqrt{|\mu_{2n}|}x_n}+[1+o(1)]\frac{p_n-2}{128}x_n^2e^{-2\sqrt{|\mu_{2n}|}x_n} \ \ \ \text{as}\ \ n\to\infty,\nonumber
\end{split}\end{equation}
which further yields that $ \lim\limits_{n\to\infty}(2-p_n)x_n^2=48. $
This therefore completes the proof of Lemma \ref{lem4.7}.\qed


\subsection{Proofs of Theorem \ref{thm1.1} and Proposition \ref{prop1.2}}

This subsection is devoted to the proofs of Theorem \ref{thm1.1} and Proposition \ref{prop1.2}. We first address the proof of Theorem \ref{thm1.1} as follows.\vspace{0.15cm}

\noindent \textbf{Proof of Theorem \ref{thm1.1}.} Recall from \cite{ii} that $J_2(2)$ does not admit any minimizer. By contradiction, we now suppose that $J_2(p)$ admits minimizers as $p\searrow 2$. Let $(\bar u_{1n}, \bar u_{2n})$ be a minimizer of $J_2(p_n)$, where $p_n\in (2, 5/2]$ satisfies $p_n\searrow 2$ as $n\to\infty$.
We then consider the minimizer sequence $\big\{(u_{1n}(x),  u_{2n}(x))\big\}$ defined by (\ref{2.39}), where $x_n>0$ is a global maximum point of $u_{2n}$ as $n\to\infty$. We thus conclude from Lemma \ref{lem4.7} that $\lim\limits_{n\to\infty}(2-p_n)x_n^2=48$, which however contradicts with the fact $2-p_n<0$. This hence proves the nonexistence of minimizers for $J_2(p)$, provided that the parameter $p-2\ge 0$ is sufficiently small, and we are done.\qed

\vspace{0.15cm}
In order to prove Proposition \ref{prop1.2}, we need the following refined estimates  as $p_n\nearrow2$.

\begin{lem}\label{lem5.1}
Let $a_n$ and $b_n$ be given by \eqref{3.5}. Then we have
	\begin{align}\label{4.20a}
a_n=\frac{\sqrt{2}}{2}+O(x_ne^{-\sqrt{|\mu_{2n}|}x_n}),\ \ \, b_n=\frac{\sqrt{2}}{2}+O(x_ne^{-\sqrt{|\mu_{2n}|}x_n})\ \ \ \text{as}\ \ p_n\nearrow2.
	\end{align}
\end{lem}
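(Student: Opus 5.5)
The plan is to turn the qualitative limits of Lemma~\ref{a} into rates by pairing two algebraic relations: one for $a_n+b_n$ and one for $a_n^2+b_n^2$. Write $\varepsilon_n:=e^{-\sqrt{|\mu_{2n}|}x_n}$. Lemma~\ref{a} gives $a_n\to\sqrt2/2$ and $b_n\to-\sqrt2/2$, so I read the second assertion of \eqref{4.20a} as $b_n=-\frac{\sqrt2}{2}+O(x_n\varepsilon_n)$, i.e.\ as a statement about $|b_n|$; I would record this sign convention first. The first relation is already available: \eqref{ab1} gives directly
\begin{equation*}
a_n+b_n=O(x_n\varepsilon_n)\quad\text{as } n\to\infty .
\end{equation*}
It therefore suffices to show
\begin{equation*}
a_n^2+b_n^2=1+O(x_n\varepsilon_n).
\end{equation*}
Granting this, $a_n^2-b_n^2=(a_n-b_n)(a_n+b_n)=O(x_n\varepsilon_n)$. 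Hence $2a_n^2=1+O(x_n\varepsilon_n)$ and $2b_n^2=1+O(x_n\varepsilon_n)$. Since $a_n\to\sqrt2/2$ and $b_n\to-\sqrt2/2$, taking square roots gives \eqref{4.20a}.

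To get $a_n^2+b_n^2=1+O(x_n\varepsilon_n)$, I would redo Step~3 of the proof of Lemma~\ref{a} quantitatively. Start from the normalization identity \eqref{4.42}. There $K_n^2=O(x_n^2\varepsilon_n^2)$ by \eqref{ab1}, and $\|\hat\phi_n\|_2^2+\|\hat\psi_n\|_2^2=O(\varepsilon_n^2)$ by \eqref{phi}; the $L^2$ bound holds because $\hat\phi_n,\hat\psi_n$ decay away from $0$ and $x_n$, which follows from the equations \eqref{AA3.5}--\eqref{AA3.6}. The cross terms $K_n\int(w_{2n}\hat\phi_n+w_{1n}\hat\psi_n)$ are $O(x_n\varepsilon_n^2)$. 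Combining these with \eqref{4.44a}, refined to $\int(w_{1n}^2-w_{2n}^2)=8(\mu_{2n}-\mu_{1n})(1+O(\mu_{2n}-\mu_{1n}))$, yields
\begin{equation*}
8(\mu_{2n}-\mu_{1n})=-2\int_{\mathbb R}(w_{1n}\hat\phi_n-w_{2n}\hat\psi_n)\,dx+O(x_n\varepsilon_n^2).
\end{equation*}
Next I would sharpen \eqref{4.59}--\eqref{4.60} using the representations \eqref{4.57}--\eqref{4.58}. The goal is to show that their error terms are $O(x_n\varepsilon_n^2)$ rather than merely $o(\varepsilon_n)$. Inserting the result and dividing by $\mu_{2n}-\mu_{1n}$, which is of exact order $\varepsilon_n$ by \eqref{4.63}, gives $a_n^2+b_n^2=1+O(x_n\varepsilon_n)$, as required.

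The main obstacle is this error bookkeeping in \eqref{4.59}--\eqref{4.60}. Each term of $\int\mathcal L_{1n}\hat\phi_n\,(xw_{1n}'+\frac{1}{p_n-1}w_{1n})$ must be shown to be $O(x_n\varepsilon_n^2)$. The terms are:
\begin{itemize}
\item the $F_{1n}$-term, controlled by the pointwise bounds in the proof of Lemma~\ref{O2};
\item the quadratic $\hat\phi_n$-terms, which are $O(\varepsilon_n^2)$ by \eqref{phi};
\item the $K_n$-term, which is $O(x_n\varepsilon_n\cdot\varepsilon_n)$;
\item the contribution $b_n(\mu_{1n}-\mu_{2n})\psi_n$, which is $O(\varepsilon_n^2)$.
\end{itemize}
The delicate parts come from the factor $x$ in $xw_{1n}'$ in overlap regions near $x_n$, and from the exponents $w^{2p_n-3}$ with $p_n\ne2$. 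To handle these I would use Lemma~\ref{N24} and Lemma~\ref{lem4.7}, namely $(2-p_n)x_n^2\to48$, so that $e^{|p_n-2|\sqrt{|\mu_{2n}|}x_n}=1+O(x_n^{-1})$, together with the local profiles \eqref{4.69a}--\eqref{4.73}. These should make such contributions $O(x_n\varepsilon_n^2)$; in particular the interaction integrals behave like $\int_0^{x_n}\varepsilon_n^2\,dx$, which is where the factor $x_n$ in the final rate comes from.
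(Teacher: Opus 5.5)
Your architecture matches the paper's. You get $a_n+b_n=O(x_n\varepsilon_n)$ from \eqref{ab1}, and $a_n^2+b_n^2=1+O(x_n\varepsilon_n)$ from \eqref{4.42} combined with sharpened \eqref{4.59}--\eqref{4.60} via \eqref{4.57}--\eqref{4.58}. Your reading $b_n=-\frac{\sqrt2}{2}+O(x_n\varepsilon_n)$ is also what the paper's proof establishes. The gap is your claim that every remainder in $\int\mathcal L_{1n}\hat\phi_n\,(xw_{1n}'+\frac{1}{p_n-1}w_{1n})\,dx$ is $O(x_n\varepsilon_n^2)$; this is false.

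Since $(2-p_n)x_n\to0$, the densities $w_{1n}w_{1n}'w_{2n}^{2p_n-2}$ on $[x_n/2,x_n]$ and $(p_n-1)w_{1n}^{2p_n-3}w_{1n}'w_{2n}^2$ on $[0,x_n/2]$ are essentially the constant $-\frac1{16}\varepsilon_n^2$. Against the weight $x$ they behave like $\int_0^{x_n}x\,\varepsilon_n^2\,dx$, not $\int_0^{x_n}\varepsilon_n^2\,dx$, giving $H_n\approx-\frac1{32}x_n^2\varepsilon_n^2$ as in \eqref{a.13}. Your list also omits the term $a_n\sqrt{1-b_n^2}(\mu_{1n}-\mu_{2n})w_{2n}$ on the right of \eqref{AA3.5}. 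It is of the same size: $\int xw_{1n}'w_{2n}\,dx\approx-\frac1{16}x_n^2\varepsilon_n$, which contributes $+\frac1{64}x_n^2\varepsilon_n^2$ (see \eqref{a.29}). These two do not cancel. So $\int xw_{1n}'\mathcal L_{1n}\hat\phi_n$ carries $-\frac1{64}x_n^2\varepsilon_n^2$ as in \eqref{4.82}, and $\int w_{1n}\hat\phi_n$ contains $+\frac18x_n^2\varepsilon_n^2$ as in \eqref{4.55}. The sup-norm bounds of Lemma~\ref{O2} only give $\|F_{1n}\|_\infty=O(\varepsilon_n)$ and cannot detect this. Carried out as you describe, dividing by $\mu_{2n}-\mu_{1n}\sim\varepsilon_n$ yields only $a_n^2+b_n^2=1+O(x_n^2\varepsilon_n)$. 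That rate is one factor of $x_n$ worse than \eqref{4.20a}.

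The missing idea is a cancellation. You must compute these $x_n^2\varepsilon_n^2$ contributions exactly, which is what Lemma~\ref{lemA.2} does using $(2-p_n)x_n^2\to48$. By the approximate reflection symmetry $x\mapsto x_n-x$, the identical term $\frac18x_n^2\varepsilon_n^2$ appears in $\int w_{2n}\hat\psi_n$ (see \eqref{4.56}). It therefore drops out of $\int(w_{1n}\hat\phi_n-w_{2n}\hat\psi_n)$, the only combination that enters \eqref{4.42}.

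A related precision issue is that you cannot round the coefficients to $8$ and $4$. Since $2-p_n\sim48x_n^{-2}\gg x_n\varepsilon_n$, replacing $\alpha_n|\mu_{2n}|^{\alpha_n-1}\|\widetilde w_n\|_2^2$, with $\alpha_n=\frac{3-p_n}{2(p_n-1)}$, by $8$ costs $O(x_n^{-2}\varepsilon_n)$. So your refinement of \eqref{4.44a} is incorrect for $p_n\neq2$. The argument works only because exactly this $p_n$-dependent coefficient multiplies $\mu_{2n}-\mu_{1n}$ in $\|w_{1n}\|_2^2-\|w_{2n}\|_2^2$, and also multiplies $(a_n^2+b_n^2)(\mu_{1n}-\mu_{2n})$ in $2\int(w_{1n}\hat\phi_n-w_{2n}\hat\psi_n)$. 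You must keep it unexpanded on both sides.
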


\noindent \textbf{Proof.}
We first claim that
\begin{equation}\label{4.82}
\begin{split}
	&\inte xw_{1n}'(\mathcal{L}_n\hat\phi_n)dx \\
	 =&\frac{1}{2}b_n^2(\mu_{1n}-\mu_{2n})\|w_{1n}\|_2^2-\frac{1}{64}x_n^2e^{-2\sqrt{|\mu_{2n}|}x_n}+
O\big(x_ne^{-2\sqrt{|\mu_{2n}|}x_n}\big) \ \,\ \text{as}\ \ p_n\nearrow2,
\end{split}
\end{equation}
where we define the operator $\mathcal{L}_{1n}:=-D_{xx}-\mu_{1n}-(2p_n-1)w_{1n}^{2(p_n-1)}$ in $\R$.
Essentially, since $\lim\limits_{p_n\nearrow2}(2-p_n)x_n=0$,
similar to \eqref{4.70b}, \eqref{4.70c} and \eqref{4.75}, it yields that
\begin{align}\label{4.19}
	&\inte \Big\{xw_{1n}w_{1n}'\big[(u_{1n}^2+u_{2n}^2)^{p_n-1}-w_{1n}^{2p_n-2}\big]-2(p_n-1) xw_{1n}^{2p_n-2}w_{1n}'\hat\phi_n\Big\}dx\nonumber\\
	=&\int_{x_n/2}^{3x_n/2}xw_{1n}w_{1n}'w_{2n}^{2p_n-2}dx
	+(p_n-1)\int_{-3x_n/4}^{x_n/2}xw^{2p_n-3}_{1n}w_{1n}'w_{2n}^2\\
	&+(p_n-1)2K_n\int_{-3x_n/4}^{x_n/2}xw^{2p_n-2}_{1n}w_{1n}'w_{2n}dx+o(x_ne^{-2\sqrt{|\mu_{2n}|}x_n})\nonumber\\
	:=&H_n+2I_n+o\big(x_ne^{-2\sqrt{|\mu_{2n}|}x_n}\big)\ \ \ \text{as}\ \ n\to\infty,\nonumber\\[2mm]
	K_n&\inte xw_{1n}'w_{2n}\big[(u_{1n}^2+u_{2n}^2)^{p_n-1}-w_{2n}^{2p_n-2}\big]dx
	=O\big(x_ne^{-2\sqrt{|\mu_{2n}|}x_n}\big)\ \ \text{as}\ \ p_n\nearrow2,\label{4.20}
\end{align}
 and
\begin{align}\label{4.87}
	&\inte \big[(u_{1n}^2+u_{2n}^2)^{p_n-1}-w_{1n}^{2p_n-2}\big]xw_{1n}'\hat \phi_ndx\\
	=&\int_{B_{x_n/4}(x_n)} w_{2n}^{2(p_n-1)}xw_{1n}'\hat \phi_ndx+o(x_ne^{-2\sqrt{|\mu_{2n}|}x_n})
	=O\big(x_ne^{-2\sqrt{|\mu_{2n}|}x_n}\big)\ \ \ \text{as}\ \ p_n\nearrow2.\nonumber
\end{align}
Following Lemma \ref{lemA.2} and the fact that $\lim\limits_{p_n\nearrow2}(2-p_n)x_n^2=48$,
we then deduce  from \eqref{AA3.5} and \eqref{4.19}--\eqref{4.87} that
\begin{align}\label{4.53a}
	&\inte(\mathcal{L}_n\hat\phi_n)xw_{1n}'dx\nonumber\\
	=&\inte \big[(u_{1n}^2+u_{2n}^2)^{p_n-1}-w_{1n}^{2p_n-2}\big]xw_{1n}'\hat \phi_ndx\nonumber\\
	&+\inte xw_{1n}w_{1n}'\big[(u_{1n}^2+u_{2n}^2)^{p_n-1}-w_{1n}^{2p_n-2}\big]dx-(2p_n-2)\inte  w_{1n}^{2p_n-2}xw_{1n}'\hat\phi_ndx\nonumber\\
	&+K_n\inte xw_{1n}'w_{2n}\big[(u_{1n}^2+u_{2n}^2)^{p_n-1}-w_{2n}^{2p_n-2}\big]dx-b_n(\mu_{1n}-\mu_{2n})\inte xw_{1n}'\psi_ndx\nonumber\\
	&+a_n\sqrt{1-b_n^2}(\mu_{1n}-\mu_{2n})\inte xw_{1n}'w_{2n}dx -b_n^2(\mu_{1n}-\mu_{2n})\inte xw_{1n}w_{1n}'dx\\
	=&H_n+3I_n+a_n\sqrt{1-b_n^2}(\mu_{1n}-\mu_{2n})\inte xw_{1n}'w_{2n}dx \nonumber\\
	&-b_n^2(\mu_{1n}-\mu_{2n})\inte xw_{1n}w_{1n}'dx+O\big(x_ne^{-2\sqrt{|\mu_{2n}|}x_n}\big)\nonumber\\
	=&-\frac{1}{64}x_n^2e^{-2\sqrt{|\mu_{2n}|}x_n}-b_n^2(\mu_{1n}-\mu_{2n})\inte xw_{1n}w_{1n}'dx+O\big(x_ne^{-2\sqrt{|\mu_{2n}|}x_n}\big)\nonumber
\end{align}
as $p_n\nearrow2$, which thus yields the claim \eqref{4.82}.

Similarly, one can verify that
\begin{align}\label{4.54}
	&\frac{1}{p_n-1}\inte w_{1n}(\mathcal{L}_{1n}\hat\phi_n)dx\nonumber\\
	=&\int_{x_n/2}^{3x_n/2} w_{1n}^2w_{2n}^{2p_n-2}dx+\int^{x_n/2}_{-3x_n/4} w_{1n}^{2p_n-2}w_{2n}^2dx+3K_n\int^{x_n/2}_{-3x_n/4} w_{1n}^{2p_n-1}w_{2n}dx\\
	&+\frac{a_n\sqrt{1-b_n^2}(\mu_{1n}-\mu_{2n})}{p_n-1}\inte w_{1n}w_{2n}dx -\frac{b_n^2(\mu_{1n}-\mu_{2n})}{p_n-1}\inte w_{1n}^2dx+o\big(x_ne^{-2\sqrt{|\mu_{2n}|}x_n}\big)\nonumber\\
	=&-\frac{b_n^2(\mu_{1n}-\mu_{2n})}{p_n-1}\|w_{1n}\|_2^2+O\big(x_ne^{-2\sqrt{|\mu_{2n}|}x_n}\big)\ \ \ \text{as}\ \ p_n\nearrow2.\nonumber
\end{align}
Applying \eqref{4.57} and \eqref{a.3}, we  then obtain from \eqref{4.82} and \eqref{4.54} that
\begin{align}\label{4.55}
	\inte w_{1n}\hat \phi_ndx
	=&\frac{1}{2\mu_{1n}}\inte \mathcal{L}_{1n}\hat\phi_n\big(xw_{1n}'+\frac{1}{p_n-1}w_{1n}\big)dx\nonumber\\
	 =&b_n^2\big(\mu_{1n}-\mu_{2n}\big)\frac{3-p_n}{4(p_n-1)|\mu_{1n}|}\|w_{1n}\|_2^2+\frac{1}{8}x_n^2e^{-2\sqrt{|\mu_{2n}|}x_n}\\
	&+O\big(x_ne^{-2\sqrt{|\mu_{2n}|}x_n}\big)\ \ \ \text{as}\ \ p_n\nearrow2.\nonumber
\end{align}
Similar to \eqref{4.55}, it gives from \eqref{4.58} that
\begin{equation}\label{4.56}
\begin{split}
	\inte w_{2n}\hat \psi_ndx
=&-a_n^2\big(\mu_{1n}-\mu_{2n}\big)\frac{3-p_n}{4(p_n-1)|\mu_{2n}|}\|w_{2n}\|_2^2+\frac{1}{8}x_n^2e^{-2\sqrt{|\mu_{2n}|}x_n}\\
	&+O\big(x_ne^{-2\sqrt{|\mu_{2n}|}x_n}\big)\ \ \ \text{as}\ \ p_n\nearrow2.
\end{split}
\end{equation}

For simplicity we now denote $\alpha_n:=\frac{3-p_n}{2(p_n-1)}>0$. Since   $\|w_{2n}\|_2^2=|\mu_{2n}|^{\alpha_n}\|\widetilde{w}\|_2^2$ and
\begin{align*}
	\|w_{1n}\|_2^2=&|\mu_{1n}|^{\alpha_n}\|\widetilde{w}\|_2^2\\ =&\big[|\mu_{2n}|^{\alpha_n}+\alpha_n|\mu_{2n}|^{\alpha_n-1}(\mu_{2n}-\mu_{1n})+\frac{\alpha_n(\alpha_n-1)}{2}|\mu_{2n}|^{\alpha_n-2}(\mu_{2n}-\mu_{1n})^2\big]\|\widetilde{w}\|_2^2\\
	&+o\big((\mu_{2n}-\mu_{1n})^2\big)\ \ \ \text{as}\ \ p_n\nearrow2,
\end{align*}
we deduce from \eqref{4.55} and \eqref{4.56} that
\begin{align}\label{4.89}
	&2\inte \big(w_{1n}\hat \phi_n-w_{2n}\hat \psi_n\big)dx\nonumber\\
	=&\alpha_n|\mu_{2n}|^{\alpha_n-1}\|\widetilde{w}\|_2^2(a_n^2+b_n^2)(\mu_{1n}-\mu_{2n})
	+O\big(x_ne^{-2\sqrt{|\mu_{2n}|}x_n}\big) \ \ \text{as}\ \ p_n\nearrow2,\nonumber
\end{align}
due to the fact that $|\mu_{1n}-\mu_{2n}|=O(e^{-\sqrt{|\mu_{2n}|}x_n})$ as $p_n\nearrow2$.
We then calculate  from \eqref{4.42} that
\begin{align*}
	&-\alpha_n|\mu_{2n}|^{\alpha_n-1}\|\widetilde{w}\|_2^2(a_n^2+b_n^2)(\mu_{1n}-\mu_{2n})\nonumber\\
	=&-2\inte \big(w_{1n}\hat \phi_n-w_{2n}\hat \psi_n\big)dx+O\big(x_ne^{-2\sqrt{|\mu_{2n}|}x_n}\big)\\
	=&\inte(w_{1n}^2-w_{2n}^2)dx+O\big(x_ne^{-2\sqrt{|\mu_{2n}|}x_n}\big) \nonumber\\ =&\Big[\alpha_n|\mu_{2n}|^{\alpha_n-1}(\mu_{2n}-\mu_{1n})+\frac{\alpha_n(\alpha_n-1)}{2}|\mu_{2n}|^{\alpha_n-2}(\mu_{2n}-\mu_{1n})^2\Big]\|\widetilde{w}\|_2^2+O\big(x_ne^{-2\sqrt{|\mu_{2n}|}x_n}\big) \nonumber
\end{align*}
as $p_n\nearrow2$, which further yields that
\begin{equation}\label{4.30a}
	\begin{split}a_n^2+b_n^2
		=&1+\frac{(\alpha_n-1)}{2}|\mu_{2n}|^{-1}(\mu_{2n}-\mu_{1n})+O\big(x_ne^{-\sqrt{|\mu_{2n}|}x_n}\big)\\
		=&1+ O\big(x_ne^{-\sqrt{|\mu_{2n}|}x_n}\big)\ \ \ \text{as}\ \ p_n\nearrow2.
\end{split}\end{equation}
Set $a_n:=\frac{\sqrt{2}}{2}+\tilde{a}_n$ and $b_n:=-\frac{\sqrt{2}}{2}+\tilde{b}_n$.
Since $a_n+b_n=\frac{1}{\sqrt{2}}K_n=-\frac{1}{2\sqrt{2}}x_ne^{-\sqrt{|\mu_{2n}|}x_n}$ as $p_n\nearrow2$,  we deduce from \eqref{4.30a} that $\tilde{a}_n-\tilde{b}_n=O(K_n)$ as $p_n\nearrow2$,  which thus yields that \eqref{4.20a} holds true. The proof of Lemma \ref{lem5.1} is therefore complete.\qed

Following Lemma \ref{lem5.1}, we are ready to complete the proof of Proposition \ref{prop1.2}.

\vspace {.15cm}
\noindent \textbf{Proof of Proposition \ref{prop1.2}.}  Let  $(u_{1n}, u_{2n})$ be a minimizer of $J_2(p_n)$, where  $p_n\nearrow 2$ as $n\to\infty$,  $u_{1n}(0)=\max_{\R}u_{1n}(x)$ and $u_{2n}(0)\leq0$.  Recall from Sections 2 and  3  that  up to a subsequence if necessary, $(u_{1n}, u_{2n})$ satisfies \eqref{4.10a} in the sense that
\begin{eqnarray}\label{4}
\left\{
\begin{array}{lll}
\!	\!	\!	 u_{1n}(x)=\sqrt{1-b^2_n}\widetilde w_{1n}(x-\delta_n)+a_n\widetilde w_{2n}(x- x_n+\eta_n)+ \phi_n(x), \\[1mm]
\!	\!	\!	 u_{2n}(x)=b_n\widetilde w_{1n}(x-\delta_n)+\sqrt{1-a_n^2}\widetilde w_{2n}(x- x_n+\eta_n)+ \psi_n(x),
\end{array}
\right.
\end{eqnarray}
where 
 $\lim\limits_{n\to\infty}(a_n, b_n, \delta_n, \eta_n)= (\frac{\sqrt{2}}{2}, -\frac{\sqrt{2}}{2}, 0, 0)$ is as in \eqref{3.5},  and $ x_n\in\R^+$ satisfying $x_n\to\infty$ as $n\to\infty$ denotes the maximum point  of $ u_{2n}$  in $\R$. Applying Lemma \ref{a}, we obtain from \eqref{AA3.2} and \eqref{0.10} that
\begin{eqnarray*}
|\delta_n|+|\eta_n|+\|\phi_n\|_\infty+\|\psi_n\|_\infty=O(e^{-\sqrt{|\mu_{2n}|}\,x_n})\ \ \text{as}\ \ n\to\infty,
\end{eqnarray*}
where $\mu_{2n}$ is as in \eqref{1:win}.
We then conclude from \eqref{1:win} and \eqref{4} that
\begin{eqnarray}\label{4.34a}
	\left\{
	\begin{array}{lll}
		\!	\!	\!	 u_{1n}(x)=\sqrt{1-b^2_n}\widetilde w_{1n}(x)+a_n\widetilde w_{2n}(x- x_n)+ O(e^{-\sqrt{|\mu_{2n}|}\,x_n})\ \ \ \text{as}\ \ n\to\infty, \\[1mm]
		\!	\!	\!	 u_{2n}(x)=b_n\widetilde w_{1n}(x)+\sqrt{1-a_n^2}\widetilde w_{2n}(x- x_n)+ O(e^{-\sqrt{|\mu_{2n}|}\,x_n})\ \ \ \text{as}\ \ n\to\infty.
	\end{array}
	\right.
\end{eqnarray}
Applying Lemma \ref{lem5.1},  we further deduce from \eqref{4.34a} and \eqref{a.3} that
\begin{eqnarray*}
	\left\{
	\begin{array}{lll}
		\!	\!	\!	 u_{1n}(x)=\frac{\sqrt{2}}{2}\widetilde w_{1n}(x)+\frac{\sqrt{2}}{2}\widetilde w_{2n}(x- x_n)+ O(x_ne^{-\frac{x_n}{4}})\ \ \ \text{as}\ \ n\to\infty, \\[1mm]
		\!	\!	\!	 u_{2n}(x)=-\frac{\sqrt{2}}{2}\widetilde w_{1n}(x)+\frac{\sqrt{2}}{2}\widetilde w_{2n}(x- x_n)+ O(x_ne^{-\frac{x_n}{4}})\ \ \ \text{as}\ \ n\to\infty,
	\end{array}
	\right.
\end{eqnarray*}
which  therefore completes the proof of Proposition \ref{prop1.2}.
\qed

\appendix
\section{Appendix}

In this appendix, we shall establish Lemmas \ref{lemA.1} and \ref{lemA.2}, which are used in Lemmas  \ref{lem4.7} and \ref{lem5.1}, respectively. 

\begin{lem}\label{lemA.1}
Let $w_{1n}$ and  $w_{2n}$ be defined by  \eqref{3.16a}, and suppose   $\lim\limits_{n\to\infty}(2-p_n)x_n=0$, where $p_n\to2$ as $n\to\infty$, and $x_n>0$ is as in \eqref{3.16a}. Then we have
\begin{align}\label{b.1}
A_n:=&\int_{x_n/2}^{3x_n/2}w_{1n}w_{1n}'w_{2n}^{2p_n-2}dx
+(p_n-1)\int_{-3x_n/4}^{x_n/2}w^{2p_n-3}_{1n}w_{1n}'w_{2n}^2dx\\
=&-\frac{1}{16}\Big[x_n-6+\big(1+o(1)\big)\frac{2-p_n}{2}\sqrt{|\mu_{2n}|}x_n^2+o(1)\Big]e^{-2\sqrt{|\mu_{2n}|}x_n}\ \ \ \text{as}\ \ n\to\infty, \nonumber
\end{align}
\begin{equation}\label{b.3}
\begin{split}
B_n:=&(p_n-1)2K_n\int_{-3x_n/4}^{x_n/2}w_{1n}'w_{2n}w^{2p_n-2}_{1n}dx+K_n\int_{-x_n/2}^{x_n/2}w_{1n}'
w_{2n}w_{1n}^{2p_n-2}dx\\
=&\Big[\frac{1}{32}x_n+o(1)\Big]e^{-2\sqrt{|\mu_{2n}|}x_n}\ \ \ \text{as}\ \ n\to\infty,
\end{split}
\end{equation}
\begin{equation}\label{b.2}
\begin{split}
C_n:=&-a_n\sqrt{1-b_n^2}(\mu_{2n}-\mu_{1n})\inte w_{2n}w_{1n}'dx\\
=&\Big[\frac{1}{32}x_n-\frac{1}{8}+o(1)\Big]e^{-2\sqrt{|\mu_{2n}|}x_n}\ \ \ \text{as}\ \ n\to\infty. \qquad\qquad\qquad\qquad\qquad
\end{split}
\end{equation}
\end{lem}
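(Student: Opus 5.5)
The plan is to compute each of $A_n$, $B_n$, $C_n$ directly from the explicit profile \eqref{win}. Write $s_n:=\sqrt{|\mu_{2n}|}\to 1/4$. By \eqref{4.63} and Lemma \ref{a}, $\mu_{2n}-\mu_{1n}=O(e^{-s_nx_n})$, so $\sqrt{|\mu_{1n}|}x_n=s_nx_n+o(1)$. Hence we may replace $e^{-\sqrt{|\mu_{1n}|}(\cdot)}$ by $e^{-s_n(\cdot)}$ at the cost of a factor $1+o(1)$. The only exception is a linear-in-$x_n$ error, which must be tracked where the integrands are not localized. Throughout we use the exact expression
\[
w_{in}(x)=\Big(\frac{2\sqrt{|\mu_{in}|p_n}}{2\cosh((p_n-1)\sqrt{|\mu_{in}|}\,x)}\Big)^{1/(p_n-1)},
\]
with $\delta_n,\eta_n=O(e^{-s_nx_n})$ negligible. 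We also use the derivative identity $w_{1n}'=-\sqrt{|\mu_{1n}|}\tanh((p_n-1)\sqrt{|\mu_{1n}|}x)\,w_{1n}$.

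For $A_n$, split each integral into a bulk region, where both factors are in their exponential tails, and the two boundary layers near $x=0$ and $x=x_n$ of width $O(1)$. In the bulk, with $2^{1/(p_n-1)}$-type constants tending to $\sqrt2$, the first integrand is $-\frac{1+o(1)}{16}e^{-2s_nx}e^{-(2p_n-2)s_n(x_n-x)}$ and the second is analogous. Integrating exactly produces the factor
\[
\frac{e^{-2s_nx_n}-e^{-p_ns_nx_n}}{(p_n-2)s_n}=x_ne^{-2s_nx_n}\Big[1+\tfrac{2-p_n}{2}s_nx_n+o((2-p_n)x_n)\Big],
\]
where the expansion uses $(2-p_n)x_n\to0$ from Lemma \ref{N24}; this yields the $x_n$ and $x_n^2$ terms. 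The constant $-6$ comes from the boundary layers. Near $x=x_n$, the replacement of $w_{2n}^{2p_n-2}$ by its tail exponential must be corrected by $\int(w_*^2(y)-\tfrac18 e^{-|y|/2}\cdots)dy$-type integrals. Near $x=0$, one uses the exact $\tanh$ and $\cosh$ in place of exponentials. I would set $p_n=2$ in these $O(1)$-width corrections, since their $p_n$-dependence contributes only $o(1)$, and evaluate them with $w_*=\frac{\sqrt2}{2}\operatorname{sech}(x/4)$, e.g. $\int_0^\infty(\tanh^2-1)$-type integrals. The tail beyond $3x_n/2$ and below $-3x_n/4$ is $o(e^{-2s_nx_n})$.

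For $B_n$, use \eqref{ab1}, $K_n=-\frac{1+o(1)}{2}x_ne^{-s_nx_n}$, together with \eqref{4.6a}, which says $e^{s_nx_n}\int w_{1n}'w_{2n}w_{1n}^{2p_n-2}\to-1/48$ on either interval; the integrand is localized near $0$. This gives $B_n=(2+1)\cdot\frac{x_n}{2}\cdot\frac1{48}e^{-2s_nx_n}=\frac{x_n}{32}e^{-2s_nx_n}$. The $o(1)$ error, as opposed to $o(x_n)$, requires the relative error in $K_n$ to be $o(1/x_n)$. I would refine \eqref{4.36a} using \eqref{phi} and \eqref{4.69a}: the correction terms are $O(e^{-s_nx_n})$, and $\int w_{1n}w_{2n}=\frac12(x_n+c)e^{-s_nx_n}$. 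Any such $O(e^{-2s_nx_n})$ correction to $B_n$ must be shown to vanish or be absorbed. This is where I expect the real difficulty: I must check whether the constant in $K_n$ contributes, and I would compute $c$ explicitly and verify that it is paired with a vanishing factor or that it cancels.

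For $C_n$, use \eqref{4.43b}: $a_n\sqrt{1-b_n^2}(\mu_{2n}-\mu_{1n})=\frac{1+o(1)}4e^{-s_nx_n}$, again requiring precision $o(1/x_n)$. Then compute $\int w_{2n}w_{1n}'=-\frac18(x_n-4+o(1))e^{-s_nx_n}$. Here the bulk contributes $-\frac18x_n$; the $+\frac12$ comes from the boundary layers, computed via $\int(\tanh-\mathrm{sign})$ and the sech-vs-exponential corrections at $p=2$. Multiplying gives $\frac{x_n}{32}-\frac18$. The main obstacle throughout is thus bookkeeping constants to order $e^{-2s_nx_n}$: both getting each boundary-layer integral exactly, and justifying that the $1+o(1)$ factors in \eqref{4.43b} and \eqref{ab1} are in fact $1+o(1/x_n)$. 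I would try to obtain this from the improved expansions \eqref{4.69a}–\eqref{4.73} fed back into Step 1 of Lemma \ref{a}.
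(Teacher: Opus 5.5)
Your route is the paper's route: evaluate everything explicitly from \eqref{win}, and sharpen the inputs from Lemma \ref{a} to relative accuracy $o(1/x_n)$ (with your $s_n=\sqrt{|\mu_{2n}|}$). Your $A_n$ bookkeeping is sound, provided the bulk prefactors are kept to relative accuracy $O(p_n-2)+O(x_ne^{-s_nx_n})$ and not merely $1+o(1)$. The boundary layers then contribute $-2$ near $x_n$ and $-4$ near $0$ to the bracket in \eqref{b.1}, cf.\ \eqref{a.16a}--\eqref{a.17}. Your value $\inte w_{2n}w_{1n}'dx=(-\frac{x_n}{8}+\frac12+o(1))e^{-s_nx_n}$ agrees with \eqref{b.12}.

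The gap is that the $o(1)e^{-2s_nx_n}$ precision claimed in \eqref{b.3} and \eqref{b.2} is exactly what you leave open, and part of your sourcing for it is too weak. Write $B_n=K_nI_n$, with $I_n$ the integral factor localized near $0$. Both factors need relative error $o(1/x_n)$, but you flag only $K_n$. \eqref{4.6a} gives $I_n$ only to relative $o(1)$, which leaves an error $o(x_n)e^{-2s_nx_n}$. The paper computes $I_n$ directly in \eqref{a.16}: it uses $(2p_n-1)w_{1n}'w_{1n}^{2p_n-2}=(w_{1n}^{2p_n-1})'$ and integrates by parts onto $w_{2n}'$, which gives relative error $O(p_n-2)=o(1/x_n)$.

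For $K_n$, the question you leave open has a definite answer, and it is the missing idea. In \eqref{4.36a} the terms $\inte w_{1n}\hat\psi_ndx$ and $\inte w_{2n}\hat\phi_ndx$ are $o(e^{-s_nx_n})$, not merely $O(e^{-s_nx_n})$. By \eqref{4.69a} and \eqref{4.73} they reduce to multiples of $\inte xw_*^2dx=0$; equivalently, use \eqref{3.7}--\eqref{3.8} together with $a_n+b_n=O(x_ne^{-s_nx_n})$. Hence $K_n=-\inte w_{1n}w_{2n}dx+o(e^{-s_nx_n})$, which is \eqref{a.18}. Your constant $c$ also vanishes: at $p=2$ one has exactly $\inte w_*(x)w_*(x-y)dx=\frac{y}{4\sinh(y/4)}=\frac y2e^{-y/4}\big(1+O(e^{-y/2})\big)$, which gives \eqref{a15}. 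Without these two facts $B_n$ could carry an extra constant, and that would change the $48$ in Lemma \ref{lem4.7}.

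The other precision inputs are likewise hoped for rather than derived. For $C_n$ you need $a_n\sqrt{1-b_n^2}(\mu_{2n}-\mu_{1n})=[\frac14+o(1/x_n)]e^{-s_nx_n}$, not the $1+o(1)$ of \eqref{4.43b}. The paper gets this in three pieces:
\begin{itemize}
\item it redoes Step 1 of Lemma \ref{a} with the improved bounds, which gives \eqref{b.6};
\item it evaluates $\int_{x_n/2}^{3x_n/2}w_{1n}w_{2n}^{2p_n-1}dx=[\frac14+O(p_n-2)]e^{-s_nx_n}$ in \eqref{b.8};
\item it uses $\|w_{2n}\|_2^2=1+O(e^{-s_nx_n})$ from \eqref{b.4}.
\end{itemize}
Your suggestion to feed improved expansions back into Step 1 is the right one, but it has to be carried out.

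Similarly, $\delta_n,\eta_n=O(e^{-s_nx_n})$ does not follow from Section 3, which only gives $\delta_n,\eta_n\to0$. The paper proves it as \eqref{0.10}, from $u_{1n}'(0)=0=u_{2n}'(x_n)$ via \eqref{a.9}, \eqref{a.6} and local elliptic bounds on $\phi_n'(0)$ and $\psi_n'(x_n)$. It is genuinely needed. Unless the shift of the inter-centre distance is $o(1/x_n)$, it perturbs the bulk terms of size $x_ne^{-2s_nx_n}$ by more than $o(1)e^{-2s_nx_n}$.
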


\noindent \textbf{Proof.}
We first derive from  \eqref{4.10} and Lemma \ref{a} that
\begin{eqnarray*}
\left\{
\begin{array}{lll}
\!	\!	\!	 \|w_{1n}\|_2^2=1-2\inte w_{1n}\hat\phi_ndx+o\big(e^{-\sqrt{|\mu_{2n}|}x_n}\big)\ \  \ \text{as} \  \ n\to\infty,\\[1mm]
\!	\!	\!	\|w_{2n}\|_2^2=1-2\inte w_{2n}\hat\psi_ndx+o\big(e^{-\sqrt{|\mu_{2n}|}x_n}\big)\ \  \ \text{as} \  \ n\to\infty.
\end{array}
\right.
\end{eqnarray*}
It then follows  from \eqref{4.59} and \eqref{4.60} that
\begin{align}\label{b.4}
\|w_{1n}\|_2^2=1+[2+o(1)]e^{-\sqrt{|\mu_{2n}|}x_n},\ \ \ \|w_{2n}\|_2^2=1-[2+o(1)]e^{-\sqrt{|\mu_{2n}|}x_n}
\end{align}
as $n\to\infty$. It yields from \eqref{w} that
$$
\|\widetilde{w}_n\|_2^2=4+(6-12\text{ln}2)(p_n-2)+o(p_n-2)\ \ \ \text{as}\ \ n\to\infty,
$$
which further gives from \eqref{win} and \eqref{b.4}  that for i=1,2,
\begin{align}\label{a.3} \sqrt{|\mu_{in}|}=&\big(\|w_{in}\|_2^2\|\widetilde{w}_n\|_2^{-2}\big)^{\frac{p_n-1}{3-p_n}}
=\frac{1}{4}-\Big(\frac{6-\text{ln}12}{16}+\text{ln}2\Big)(p_n-2)+o(p_n-2)
\end{align}
as $n\to\infty$.

Let  $\delta_n$ and $\eta_n$  be as in \eqref{3.5}. We claim that
\begin{eqnarray}\label{0.10}
	\delta_n= O(e^{-\sqrt{|\mu_{2n}|}\, x_n}),\ \ \, \eta_n=O(e^{-\sqrt{|\mu_{2n}|}\,x_n})\ \ \,\text{as}\ \ n\to\infty.
\end{eqnarray}
Actually, it follows from \eqref{3.2} and \eqref{4.10a} that
\begin{eqnarray}\label{a.9}
	\left\{
	\begin{array}{lll}
		\!	\!	\!	 0=u_{1n}'(0)=\sqrt{1-b^2_n}\widetilde w_{1n}'(-\delta_n)+a_n\widetilde w_{2n}'(-x_n+\eta_n)+ \phi_n'(0)\\
		\ \ \ \ \ \ \ \ \ \ \ \ \ =-\delta_n\sqrt{1-b^2_n}\widetilde w_{1n}''(0)+a_n\widetilde w_{2n}'(-x_n+\eta_n)+ \phi_n'(0), \\[1.5mm]
		\!	\!	\!	 0=u_{2n}'(x_n)=b_n\widetilde w_{1n}'(x_n-\delta_n)+\sqrt{1-a_n^2}\widetilde w_{2n}'(\eta_n)+ \psi_n'(x_n)\\
		\ \ \ \ \ \ \ \ \ \ \ \ \ \  \ =b_n\widetilde w_{1n}'(x_n-\delta_n)+\eta_n\sqrt{1-a_n^2}\widetilde w_{2n}'(0)+ \psi_n'(x_n).
	\end{array}
	\right.
\end{eqnarray}
Since
\begin{eqnarray*}
\left\{
\begin{array}{lll}
\!	\!	\!	\widetilde w'_{in}(x)=\Big(2\sqrt{|\mu_{in}|p_n}\Big)^{\frac{1}{p_n-1}}\sqrt{|\mu_{in}|}
\frac{e^{\sqrt{|\mu_{in}|}x}\big(1-e^{2(p_n-1)\sqrt{|\mu_{in}|}x}\big)}{\big(1+e^{2(p_n-1)\sqrt{|\mu_{in}|}x}\big)^{\frac{p_n}{p_n-1}}}\ \ \ \text{in}\,\ \R, \\[1mm]
\!	\!	\!	 w^{''}_{in}(x)=\Big(2\sqrt{|\mu_{in}|p_n}\Big)^{\frac{1}{p_n-1}}|\mu_{in}|e^{\sqrt{|\mu_{in}|}x}\big(1+e^{2(p_n-1)\sqrt{|\mu_{in}|}x}\big)^{\frac{-2p_n+1}{p_n-1}}\\[1.5mm]
\ \ \ \ \ \ \ \  \ \ \ \cdot\left[\Big(1-(2p_n-1)e^{2(p_n-1)\sqrt{|\mu_{in}|}x}\Big)\big(1+e^{2(p_n-1)\sqrt{|\mu_{in}|}x}\big)\right.\\[1mm]
\ \ \ \ \ \ \ \ \ \ \ \ \ \ \left.-2p_n\big(1-e^{2(p_n-1)\sqrt{|\mu_{in}|}x}\big)e^{2(p_n-1)\sqrt{|\mu_{in}|}x}\right]\ \ \ \text{in}\,\ \R,
\end{array}
\right.
\end{eqnarray*}
we have
\begin{eqnarray}\label{a.6}
\left\{
\begin{array}{lll}
\!	\!	\!	\widetilde w'_{in}(x)=-\displaystyle\frac{1+o(1)}{4\sqrt{2}}e^{-\sqrt{|\mu_{in}|}\,|x|}\ &\text{as}\ \ |x|\to\infty, \\[2mm]
\!	\!	\!	\widetilde w''_{in}(x)=-\displaystyle\frac{1+o(1)}{32\sqrt{2}}\ &\text{as}\ \ |x|\to0.
\end{array}
\right.
\end{eqnarray}
Using  Lemma \ref{a} and the local elliptic estimates (cf. \cite[Equation (3.15)]{elli}),   we deduce from \eqref{AA3.5} and \eqref{AA3.6}  that
\begin{equation*}\label{b.11}
|\phi_n'(0)|+ |\psi_n'(x_n)|=O(e^{-\sqrt{|\mu_{2n}|}\,x_n})\ \ \ \text{as}\ \ n\to\infty,
\end{equation*}
which thus yields from \eqref{a.9} and  \eqref{a.6}  that the claim \eqref{0.10} holds true.

We now estimate $C_n$. It yields from \eqref{4.44b}, \eqref{O4} and \eqref{O5} that
\begin{align}\label{b.6}
&a_n\sqrt{1-b_n^2}(\mu_{2n}-\mu_{1n})\|w_{2n}\|_2^2\nonumber\\
=&\inte \big[(u_{1n}^2+u_{2n}^2)^{p_n-1}-w_{1n}^{2p_n-2}\big]w_{1n}w_{2n}dx+o\big(e^{-\frac{5}{4}\sqrt{|\mu_{2n}|}x_n}\big)\\
=&\int_{x_n/2}^{3x_n/2}w_{1n}w_{2n}^{2p_n-1}dx+o\big(e^{-\frac{5}{4}\sqrt{|\mu_{2n}|}x_n}\big)\ \ \ \text{as}\ \ n\to\infty.\nonumber
\end{align}
Since  $\lim\limits_{n\to\infty}(2-p_n)x_n=0$,   using   Taylor's expansion,   there exists $\theta_n\in(0,1)$ such that
\begin{align*}
&\frac{1}{[1+e^{-2(p_n-1)\sqrt{|\mu_{2n}|}x}]^{\frac{2p_n-1}{p_n-1}}}\\
=&\frac{1+\frac{p_n-2}{p_n-1}(1+e^{-2(p_n-1)\sqrt{|\mu_{2n}|}x})^{\theta_n\frac{p_n-2}{p_n-1}}\text{ln}[1+e^{-2(p_n-1)
\sqrt{|\mu_{2n}|}x}]}{[1+e^{-2(p_n-1)\sqrt{|\mu_{2n}|}x}]^{3}}\\
=&\frac{1+(1+o(1))(p_n-2)\text{ln}[1+e^{-2(p_n-1)\sqrt{|\mu_{2n}|}x}]}{[1+e^{-2(p_n-1)\sqrt{|\mu_{2n}|}x}]^{3}}
\end{align*}
uniformly in $[-x_n/2,\, x_n/2]$ as $n\to\infty$. 
We thus calculate from \eqref{4.63},  \eqref{a.3} and \eqref{0.10}  that 
\begin{align}\label{b.8}
&\int_{x_n/2}^{3x_n/2}w_{1n}w_{2n}^{2p_n-1}dx\nonumber\\
=&\big[\frac{1}{4}+O(p_n-2)\big] e^{-\sqrt{|\mu_{2n}|}\,x_n}\int_{-x_n/2}^{x_n/2} \frac{e^{-2(p_n-1)\sqrt{|\mu_{2n}|}\,x}}{[1+e^{-2(p_n-1)\sqrt{|\mu_{2n}|}\,x}]^{\frac{2p_n-1}{p_n-1}}}dx\nonumber\\
=&
\big[\frac{1}{4}+O(p_n-2)\big]e^{-\sqrt{|\mu_{2n}|}\,x_n}\int_{-x_n/2}^{x_n/2}
\frac{e^{-2(p_n-1)\sqrt{|\mu_{2n}|}x}\big[1+(p_n-2)\text{ln}(1+e^{-2(p_n-1)\sqrt{|\mu_{2n}|}\,x})\big]}
{(1+e^{-2(p_n-1)\sqrt{|\mu_{2n}|}\,x})^{3}}dx\nonumber\\
=&
\big[\frac{1}{4}+O(p_n-2)\big]e^{-\sqrt{|\mu_{2n}|}\,x_n}\int_{-x_n/2}^{x_n/2}\frac{e^{-2(p_n-1)\sqrt{|\mu_{2n}|}x}}
{[1+e^{-2(p_n-1)\sqrt{|\mu_{2n}|}\,x}]^{3}}dx\\
=&
\big[\frac{1}{4}+O(p_n-2)\big]e^{-\sqrt{|\mu_{2n}|}\,x_n}\frac{-1}{4(p_n-1)\sqrt{|\mu_{2n}|}}
\frac{2e^{2(p_n-1)\sqrt{|\mu_{2n}|}\,x}+1}{[1+e^{2(p_n-1)\sqrt{|\mu_{2n}|}\,x}]^2}\Big|_{-x_n/2}^{x_n/2}\nonumber\\
=&\Big[\frac{1}{4}+O(p_n-2)\Big]e^{-\sqrt{|\mu_{2n}|}\,x_n}\ \ \ \text{as}\ \ n\to\infty,\nonumber
\end{align}
where  we have used the fact that
$$
\lim\limits_{n\to\infty}\int_{-x_n/2}^{x_n/2}\frac{e^{-2(p_n-1)\sqrt{|\mu_{2n}|}x}\text{ln}(1+e^{-2(p_n-1)\sqrt{|\mu_{2n}|}x})}{(1+e^{-2(p_n-1)\sqrt{|\mu_{2n}|}x})^{3}}dx=\frac{1}{2}.
$$
As a consequence of \eqref{b.4},  \eqref{b.6} and \eqref{b.8}, we obtain that
\begin{align}\label{b.8a}
-a_n\sqrt{1-b_n^2}(\mu_{2n}-\mu_{1n})
=&-\Big[\frac{1}{4}+O(p_n-2)\Big]e^{-\sqrt{|\mu_{2n}|}x_n}\ \ \, \text{as}\ \ n\to\infty.
\end{align}

Moreover, similar to \eqref{b.8}, one can calculate that
\begin{equation*}
\begin{split}
&\int_{-\infty}^{0} w'_{1n}w_{2n}dx\\
=&\big[\frac{1}{8}+O(p_n-2)\big]e^{-\sqrt{|\mu_{2n}|}x_n}\int_{-\infty}^{0} \frac{e^{(\sqrt{|\mu_{1n}|}+\sqrt{|\mu_{2n}|})\,x}(1-e^{2\sqrt{|\mu_{1n}|}(p_n-1)x})}
{(1+e^{2\sqrt{|\mu_{1n}|}(p_n-1)x})^2}dx\\
=&\big[\frac{1}{8}+O(p_n-2)\big]e^{-\sqrt{|\mu_{2n}|}x_n}\\
&\cdot\int_{-x_n}^{0}
\frac{[1+(4-2p_n)\sqrt{|\mu_{1n}|}x]e^{2(p_n-1)\sqrt{|\mu_{1n}|}\,x}(1-e^{2\sqrt{|\mu_{1n}|}(p_n-1)x})}
{(1+e^{2\sqrt{|\mu_{1n}|}(p_n-1)x})^2}dx\\
=&\big[\frac{1}{8}+O(p_n-2)\big]e^{-\sqrt{|\mu_{2n}|}x_n}\frac{1}{2\sqrt{|\mu_{1n}|}(p_n-1)}\\
&\cdot\Big[-\frac{2}{1+e^{2\sqrt{|\mu_{1n}|}(p_n-1)x}}-\text{ln}(1+e^{2\sqrt{|\mu_{1n}|}(p_n-1)x})\Big]\Big|_{-x_n}^{0}\\
=&\big[1+O(p_n-2)\big]\frac{1-\text{ln}2}{4}e^{-\sqrt{|\mu_{2n}|}x_n}\ \ \ \text{as}\ \ n\to\infty,
\end{split}
\end{equation*}
\begin{equation*}
\begin{split}
&\int_{0}^{x_n/2} w'_{1n}w_{2n}dx \\
=&\big[\frac{1}{8}+O(p_n-2)\big]e^{-\sqrt{|\mu_{2n}|}x_n} \int_{0}^{x_n/2}\frac{(e^{-2(p_n-1)\sqrt{|\mu_{2n}|}x}-1)}{(1+e^{-2(p_n-1)\sqrt{|\mu_{2n}|}x})^{\frac{p_n}{p_n-1}}}dx\\
=&\big[-\frac{1}{8}+O(p_n-2)\big]e^{-\sqrt{|\mu_{2n}|}x_n}\int_{0}^{x_n/2}
\frac{(1-e^{-2(p_n-1)\sqrt{|\mu_{2n}|}x})}{(1+e^{-2(p_n-1)\sqrt{|\mu_{2n}|}x})^{2}}dx \\
=&\big[-\frac{1}{8}+O(p_n-2)\big]e^{-\sqrt{|\mu_{2n}|}x_n}\frac{1}{2(p_n-1)\sqrt{|\mu_{2n}|}} \\
&\cdot\Big[\frac{2}{1+e^{2(p_n-1)\sqrt{|\mu_{2n}|}x}}+\text{ln}(1+e^{2(p_n-1)\sqrt{|\mu_{2n}|}x})\Big]\Big|_{0}^{x_n/2} \\
=&\Big[-\frac{x_n}{16}+\frac{1}{4}+\frac{\text{ln}2}{4}+O(p_n-2)\Big]e^{-\sqrt{|\mu_{2n}|}x_n}\ \ \ \text{as}\ \ n\to\infty,
\end{split}
\end{equation*}
\begin{equation*}
\begin{split}
&\int^{x_n}_{x_n/2} w'_{1n}w_{2n}dx\nonumber\\
=&\big[-\frac{1}{8}+O(p_n-2)\big]e^{-\sqrt{|\mu_{2n}|}x_n}\int^{x_n}_{x_n/2}\frac{1}{[1+e^{2(p_n-1)
\sqrt{|\mu_{2n}|}(x-x_n)}]^{\frac{1}{p_n-1}}}dx \\
=&\big[-\frac{1}{8}+O(p_n-2)\big]e^{-\sqrt{|\mu_{2n}|}x_n}\int_{0}^{x_n/2}\frac{1+O(p_n-2)\text{ln}(1+e^{-2(p_n-1)\sqrt{|\mu_{2n}|}y})}{1+e^{-2(p_n-1)\sqrt{|\mu_{2n}|}y}}dy\nonumber\\
=&\Big[-\frac{x_n}{16}+\frac{\text{ln}2}{4}+O(p_n-2)\Big]e^{-\sqrt{|\mu_{2n}|}x_n}\ \ \ \text{as}\ \ n\to\infty,\nonumber
\end{split}
\end{equation*}
\begin{equation*}
\begin{split}
&\int^{\infty}_{x_n} w'_{1n}w_{2n}dx=\int^{2x_n}_{x_n} w'_{1n}w_{2n}dx \\
=&\big[-\frac{1}{8}+O(p_n-2)\big]e^{-\sqrt{|\mu_{2n}|}x_n}\int^{2x_n}_{x_n}\frac{1}{[1+e^{2(p_n-1)
\sqrt{|\mu_{2n}|}(x-x_n)}]^{\frac{1}{p_n-1}}}dx \\
=&\big[-\frac{1}{8}+O(p_n-2)\big]e^{-\sqrt{|\mu_{2n}|}x_n}\int^{0}_{-x_n}\frac{1+O(|p_n-2|)
\text{ln}(1+e^{-2(p_n-1)\sqrt{|\mu_{2n}|}y})}{1+e^{-2(p_n-1)\sqrt{|\mu_{2n}|}y}}dy \\
=&\big[-\frac{1}{8}+O(p_n-2)\big]e^{-\sqrt{|\mu_{2n}|}x_n}\frac{1}{2(p_n-1)\sqrt{|\mu_{2n}|}}
\text{ln}(1+e^{2(p_n-1)\sqrt{|\mu_{2n}|}y})\Big|^{0}_{-x_n} \\
=&\Big[-\frac{\text{ln}2}{4}+O(p_n-2)\Big]e^{-\sqrt{|\mu_{2n}|}x_n}\ \   \text{as}\ \ n\to\infty.
\end{split}
\end{equation*}
We then obtain from above that
\begin{align}\label{b.12}
\inte w'_{1n}w_{2n}dx
=\Big[-\frac{x_n}{8}+\frac{1}{2}+O(p_n-2)\Big]e^{-\sqrt{|\mu_{2n}|}x_n}\ \   \text{as}\ \ n\to\infty.
\end{align}
This therefore proves \eqref{b.2} in view of \eqref{b.8a}.

In order to prove \eqref{b.3},  we first deduce from \eqref{4.36a}, \eqref{4.69a} and \eqref{4.73} that
\begin{equation}\label{a.18}
\begin{split}
K_n
=&-\inte w_{1n}w_{2n} dx+o\big(e^{-\sqrt{|\mu_{2n}|}x_n}\big)\ \ \ \text{as}\ \ n\to\infty.
\end{split}\end{equation}
The same arguments of \eqref{b.8} and \eqref{b.12} show that
\begin{equation}\label{a15}
-\inte w_{1n}w_{2n} dx=\Big[-\frac{x_n}{2}+O(p_n-2)\Big]e^{-\sqrt{|\mu_{2n}|}x_n}\ \ \text{as}\ \ n\to\infty,
\end{equation}
and
\begin{align}\label{a.16}
&(2p_n-1)\inte w_{2n}w'_{1n}w_{1n}^{2p_n-2}dx\nonumber\\
=&O(e^{-\frac{3}{2}\sqrt{|\mu_{2n}|}x_n})-\int_{-x_n/4}^{x_n/4}w'_{2n}w_{1n}^{2p_n-1}dx\nonumber\\
=&-\big[\frac{1}{16}+O(p_n-2)\big]e^{-\sqrt{|\mu_{2n}|}x_n}\int_{-x_n/4}^{x_n/4}\frac{e^{-2(p_n-1)\sqrt{|\mu_{2n}|}x}}{\big(1+e^{-2(p_n-1)\sqrt{|\mu_{2n}|}x}\big)^{\frac{2p_n-1}{p_n-1}}}dx\\
=&-\big[\frac{1}{16}+O(p_n-2)\big]e^{-\sqrt{|\mu_{2n}|}x_n}\int_{-x_n/4}^{x_n/4}\frac{e^{-2(p_n-1)\sqrt{|\mu_{2n}|}x}}{\big(1+e^{-2(p_n-1)\sqrt{|\mu_{2n}|}x}\big)^{3}}dx\nonumber\\
=&\Big[\frac{1}{16}+O(p_n-2)\Big]\frac{e^{-\sqrt{|\mu_{2n}|}x_n}}{4(p_n-1)
\sqrt{|\mu_{2n}|}}\frac{2e^{2(p_n-1)\sqrt{|\mu_{2n}|}x}+1}{(1+e^{2(p_n-1)\sqrt{|\mu_{2n}|}x})^2}
\Big|_{-x_n/4}^{x_n/4}\nonumber\\
=&-\Big[\frac{1}{16}+O(p_n-2)\Big]e^{-\sqrt{|\mu_{2n}|}x_n}\ \ \ \text{as}\ \ n\to\infty.\nonumber
\end{align}
This proves \eqref{b.3}.

We next prove \eqref{b.1} as follows.  Since
\begin{equation}\label{a.13a}
\begin{split}
&\int^{x_n/2}_{-x_n/2}\frac{x}{(1+e^{-2(p_n-1)\sqrt{|\mu_{2n}|}x})^{2}}dx=\int^{x_n/2}_{0}
\frac{x}{(1+e^{-2(p_n-1)\sqrt{|\mu_{2n}|}x})^{2}}dx+O(1)\\
=&\int^{x_n/2}_{0}\big[x+O(xe^{-2(p_n-1)\sqrt{|\mu_{2n}|}x})\big]dx=\frac{x_n^2}{8}+O(1)\ \ \ \text{as}\ \ n\to\infty,
\end{split}
\end{equation}
and
\begin{equation}\label{a.15}
\begin{split}
&\int^{x_n/2}_{-3x_n/4}x\frac{1-e^{-2(p_n-1)\sqrt{|\mu_{2n}|}x}}{(1+e^{-2(p_n-1)\sqrt{|\mu_{2n}|}x})^{3}}dx \\
=&\int^{x_n/2}_{0}x\frac{1-e^{-2(p_n-1)\sqrt{|\mu_{2n}|}x}}{(1+e^{-2(p_n-1)\sqrt{|\mu_{2n}|}x})^{3}}dx+O(1)
=\frac{x_n^2}{8}+O(1)\ \ \ \text{as}\ \ n\to\infty,
\end{split}
\end{equation}
we have
\begin{align}\label{a.16a}
&\int_{x_n/2}^{3x_n/2}w_{1n}w_{1n}'w_{2n}^{2p_n-2}dx\nonumber\\
=&-\frac{1+O(p_n-2)}{16}\int^{3x_n/2}_{x_n/2}\frac{e^{-2\sqrt{|\mu_{2n}|}x}e^{2(p_n-1)
\sqrt{|\mu_{2n}|}(x-x_n)}}{(1+e^{2(p_n-1)\sqrt{|\mu_{2n}|}(x-x_n)})^2}dx\nonumber\\
=&-\frac{1+O(p_n-2)}{16}e^{-(2p_n-2)\sqrt{|\mu_{2n}|}x_n}
\int^{x_n/2}_{-x_n/2}\frac{1-(4-2p_n)\sqrt{|\mu_{2n}|}(-x+x_n)}{(1+e^{-2(p_n-1)\sqrt{|\mu_{2n}|}x})^{2}}dx\nonumber\\
=&-\frac{1+O(p_n-2)}{16}e^{-(2p_n-2)\sqrt{|\mu_{2n}|}x_n}\big[1-(4-2p_n)\sqrt{|\mu_{2n}|}x_n\big]\\
&\cdot \frac{1}{2(p_n-1)\sqrt{|\mu_{2n}|}}\Big[\frac{1}{1+e^{2(p_n-1)\sqrt{|\mu_{2n}|}x}}+\text{ln}(1+e^{2(p_n-1)\sqrt{|\mu_{2n}|}x})\Big]\Big|_{-x_n/2}^{x_n/2}\nonumber\\
&-\frac{1}{64}(2-p_n)\sqrt{|\mu_{2n}|}x_n^2e^{-(2p_n-2)\sqrt{|\mu_{2n}|}x_n}+o(e^{-2\sqrt{|\mu_{2n}|}x_n})\nonumber\\
=&-\frac{1}{16}\big(\frac{x_n}{2}-2\big)e^{-2\sqrt{|\mu_{2n}|}x_n}-\frac{1+o(1)}{64}(2-p_n)\sqrt{|\mu_{2n}|}x_n^2e^{-2\sqrt{|\mu_{2n}|}x_n}+o(e^{-2\sqrt{|\mu_{2n}|}x_n})
\nonumber
\end{align}
as $n\to\infty$, and
\begin{align}\label{a.17}
&(p_n-1)\int^{x_n/2}_{-3x_n/4} w_{1n}'w_{1n}^{2p_n-3}w_{2n}^2dx\nonumber\\
=&(p_n-1)\int^{x_n/2}_{-3x_n/4} w_{1n}'w^{2p_n-3}_{1n}w_{2n}^{2p_n-2}\big[1+(4-2p_n)\text{ln}w_{2n}\big]dx\nonumber\\
=&\frac{1+O(p_n-2)}{16}e^{-(2p_n-2)\sqrt{|\mu_{2n}|}x_n}\nonumber\\
&\cdot\int^{x_n/2}_{-3x_n/4}\frac{e^{-2(p_n-1)\sqrt{|\mu_{2n}|}x}-1}{(1+e^{-2(p_n-1)
\sqrt{|\mu_{2n}|}x})^{3}}\Big[1-(4-2p_n)\sqrt{|\mu_{2n}|}(x_n-x)\Big]dx\\
=&-\frac{1+O(p_n-2)}{16}\frac{e^{-(2p_n-2)\sqrt{|\mu_{2n}|}x_n}}{2(p_n-1)\sqrt{|\mu_{2n}|}}\big[1-(4-2p_n)\sqrt{|\mu_{2n}|}x_n\big]\nonumber\\
&\cdot\Big[\frac{3e^{2(p_n-1)\sqrt{|\mu_{2n}|}x}+2}{(1+e^{2(p_n-1)\sqrt{|\mu_{2n}|}x})^2}+\text{ln}(1+e^{2(p_n-1)\sqrt{|\mu_{2n}|}x})\Big]\Big|^{x_n/2}_{-3x_n/4}\nonumber\\
&-\frac{1}{64}(2-p_n)\sqrt{|\mu_{2n}|}x_n^2e^{-(2p_n-2)\sqrt{|\mu_{2n}|}x_n}+o(e^{-2\sqrt{|\mu_{2n}|}x_n})\nonumber\\
=&-\frac{1}{16}\big(\frac{x_n}{2}-4\big)e^{-2\sqrt{|\mu_{2n}|}x_n}-\frac{1+o(1)}{64}(2-p_n)\sqrt{|\mu_{2n}|}x_n^2e^{-2\sqrt{|\mu_{2n}|}x_n}+o(e^{-2\sqrt{|\mu_{2n}|}x_n})
\nonumber
\end{align}
as $n\to\infty$. We hence obtain from above that \eqref{b.1} holds true. Lemma \ref{lemA.1} is thus proved. \qed

\begin{lem}\label{lemA.2}
Suppose $\lim\limits_{p_n\nearrow2}(2-p_n)x_n^2=48$, where $x_n>0$ is as in \eqref{3.16a}. Then we have
\begin{equation}\label{a.13}
\begin{split}
&H_n:=\int_{x_n/2}^{3x_n/2}xw_{1n}w_{1n}'w_{2n}^{2p_n-2}dx
+(p_n-1)\int_{-3x_n/4}^{x_n/2}xw^{2p_n-3}_{1n}w_{1n}'w_{2n}^2dx\\
&\ \ \ \ \ =\big[-\frac{x_n^2}{32}-\frac{x_n}{16}\big]e^{-2\sqrt{|\mu_{2n}|}x_n}+o(x_ne^{-2\sqrt{|\mu_{2n}|}x_n})\ \   \text{as}\ \ p_n\nearrow2,
\end{split}
\end{equation}
and
\begin{equation}\label{a.29}
\begin{split}
&a_n\sqrt{1-b_n^2}(\mu_{1n}-\mu_{2n})\inte xw_{1n}'w_{2n}dx\\
=&\frac{1}{64}x_n^2e^{-\sqrt{|\mu_{2n}|}x_n}+o(x_ne^{-2\sqrt{|\mu_{2n}|}x_n})\ \ \text{as}\ \ p_n\nearrow2.
\end{split}
\end{equation}
\end{lem}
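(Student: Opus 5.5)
The plan mirrors the proof of Lemma \ref{lemA.1}, with an extra weight $x$ in every integrand. For \eqref{a.13} I will treat the two pieces of $H_n$ separately, using the explicit formula \eqref{win} and the expansions \eqref{a.3} and \eqref{0.10}.

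\textbf{First piece of $H_n$.} In $\int_{x_n/2}^{3x_n/2} x\,w_{1n}w_{1n}'w_{2n}^{2p_n-2}\,dx$ I substitute $x=y+x_n$ with $y\in[-x_n/2,x_n/2]$. The weight becomes $x = x_n + y$, so the integral splits into $x_n$ times the integral already computed in \eqref{a.16a}, plus a new integral carrying the weight $y$. For the new integral I replace $w_{1n}w_{1n}'$ by its exponential tail $-\frac{1+o(1)}{16}e^{-2\sqrt{|\mu_{2n}|}(y+x_n)}$ and use the exact profile of $w_{2n}^{2p_n-2}$. I then expand $e^{(4-2p_n)\sqrt{|\mu_{2n}|}y}=1+(4-2p_n)\sqrt{|\mu_{2n}|}y+O((2-p_n)^2y^2)$. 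Since $(2-p_n)x_n^2\to48$, we have $(2-p_n)x_n\to 0$ but $(2-p_n)x_n^2$ is of order one. Hence the first-order correction has to be kept in the $x_n\cdot(\ldots)$ term, where it contributes at order $x_n\cdot(2-p_n)x_n^2=O(x_n)$. In the $y$-weighted term it contributes $(2-p_n)\int y^2\sim(2-p_n)x_n^3=O(x_n)$. The integrals needed are $\int_{-x_n/2}^{x_n/2}\frac{y^k}{(1+e^{-2(p_n-1)\sqrt{|\mu_{2n}|}y})^2}\,dy$ for $k=0,1,2$. Their leading parts are $\int_0^{x_n/2}y^k\,dy$ plus $O(1)$ corrections, as in \eqref{a.13a}.

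\textbf{Second piece of $H_n$.} I treat $(p_n-1)\int_{-3x_n/4}^{x_n/2}x\,w_{1n}^{2p_n-3}w_{1n}'w_{2n}^2\,dx$ the same way as \eqref{a.17}. I write $w_{2n}^2=w_{2n}^{2p_n-2}\bigl[1+(4-2p_n)\ln w_{2n}+\cdots\bigr]$ with $\ln w_{2n}\approx-\sqrt{|\mu_{2n}|}(x_n-x)$, and use the moments \eqref{a.15} with weights $x$ and $x^2$.

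\textbf{Collecting terms.} The $x_n^3(2-p_n)$-type terms arise from the two pieces with specific numerical coefficients. After inserting $\sqrt{|\mu_{2n}|}=\frac14+O(p_n-2)$ and $(2-p_n)x_n^2=48+o(1)$, they should combine into the stated $-\frac{x_n^2}{32}-\frac{x_n}{16}$. All remaining errors are $o(x_n e^{-2\sqrt{|\mu_{2n}|}x_n})$, by the same Taylor bounds on $[1+e^{-2(p_n-1)\sqrt{|\mu_{2n}|}x}]^{-(2p_n-1)/(p_n-1)}$ used in \eqref{b.8}.

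\textbf{Proof of \eqref{a.29}.} By \eqref{b.8a},
\[
a_n\sqrt{1-b_n^2}(\mu_{1n}-\mu_{2n})=-\Bigl[\tfrac14+O(p_n-2)\Bigr]e^{-\sqrt{|\mu_{2n}|}x_n}.
\]
It therefore suffices to compute $\int_{\mathbb R} x\,w_{1n}'w_{2n}\,dx$. I split it over $(-\infty,0)$, $(0,x_n/2)$, $(x_n/2,x_n)$ and $(x_n,\infty)$, exactly as in the derivation of \eqref{b.12}. On $(0,x_n/2)$ the integrand behaves like $-\frac18 e^{-\sqrt{|\mu_{2n}|}x_n}\,x$, which contributes $-\frac{x_n^2}{64}$. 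On $(x_n/2,x_n)$ the weight $x=x_n+y$ gives $-\frac18 e^{-\sqrt{|\mu_{2n}|}x_n}\bigl(\frac{x_n^2}{2}-\frac{x_n^2}{8}\bigr)$. The other two regions give $O(x_n)$. The total is $-\frac{1}{16}x_n^2 e^{-\sqrt{|\mu_{2n}|}x_n}+O(x_n e^{-\sqrt{|\mu_{2n}|}x_n})$, so the product with the prefactor is $\frac1{64}x_n^2e^{-2\sqrt{|\mu_{2n}|}x_n}+O(x_n e^{-2\sqrt{|\mu_{2n}|}x_n})$. The exponent $-2\sqrt{|\mu_{2n}|}x_n$ is what the computation actually produces; the single exponent written in \eqref{a.29} is presumably a typo.

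A subtlety in this last step is the correction $(4-2p_n)\sqrt{|\mu_{2n}|}\,x$ in the exponent. Multiplied by the $x^2$ moment it gives $(2-p_n)x_n^3=O(x_n)$, so it is not negligible against the target error and must be checked. It lands in the $O(x_n)$ error, which is what \eqref{a.29} needs for its use in Lemma \ref{lem5.1}; the stated $o(x_n\cdots)$ is stronger and may need extra care.

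\textbf{Main obstacle.} The main obstacle is the bookkeeping of the $O(x_n)$-level constants in \eqref{a.13}. There the first-order terms in $(2-p_n)$, amplified by $x_n^3$, are of the same order as the target terms, so every $(2-p_n)$-correction has to be kept exactly. These corrections come from $\ln w_{2n}$, from the difference of the exponents $2p_n-2$ versus $2$, and from $\sqrt{|\mu_{2n}|}$ in \eqref{a.3}. I would first collect all contributions as explicit polynomials in $x_n$ times $(2-p_n)$ before substituting $(2-p_n)x_n^2=48$.
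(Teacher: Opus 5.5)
There is a genuine gap. Your argument for \eqref{a.29} does not reach the stated error, and the constants in \eqref{a.13} are asserted rather than derived.

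\textbf{The error in \eqref{a.29}.} Your region-by-region estimate gives only $O(x_ne^{-2\sqrt{|\mu_{2n}|}x_n})$, whereas the lemma claims $o(x_ne^{-2\sqrt{|\mu_{2n}|}x_n})$. The obstruction you name does not exist. The product $w_{1n}'w_{2n}$ contains only first powers. So away from $x=0$ and $x=x_n$, where only exponentially localized edge corrections occur, it equals $-[\frac18+O(p_n-2)]e^{-\sqrt{|\mu_{2n}|}x_n}$ up to factors $1+O(x_ne^{-\sqrt{|\mu_{2n}|}x_n})$. There is no $(2-p_n)x$ term in the exponent, and the $O(p_n-2)$ costs only $O((2-p_n)x_n^2)=O(1)$.

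What actually lives at order $x_n$ are the $O(1)$ edge constants near $x=x_n$, because they are multiplied by the weight $x\approx x_n$. By the computations preceding \eqref{b.12}:
\begin{itemize}
\item $\int_{x_n/2}^{x_n}w_{1n}'w_{2n}\,dx$ contains $+\frac{\ln 2}{4}e^{-\sqrt{|\mu_{2n}|}x_n}$, which you dropped when you wrote $-\frac18(\frac{x_n^2}{2}-\frac{x_n^2}{8})$;
\item $\int_{x_n}^{\infty}w_{1n}'w_{2n}\,dx=[-\frac{\ln 2}{4}+O(p_n-2)]e^{-\sqrt{|\mu_{2n}|}x_n}$, which you only bounded by $O(x_n)$ after weighting.
\end{itemize}
After multiplication by $x_n$ these two cancel. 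Writing $x=x_n+y$ on $(x_n/2,\infty)$ and keeping them gives $\int_{\R}xw_{1n}'w_{2n}\,dx=[-\frac{x_n^2}{16}+O(1)]e^{-\sqrt{|\mu_{2n}|}x_n}$, which proves the claim.

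The paper avoids this bookkeeping. It integrates by parts and reflects $x\mapsto x_n-x$; this swaps $w_{1n}$ and $w_{2n}$ up to exponentially small errors, since $\mu_{2n}-\mu_{1n},\delta_n,\eta_n=O(e^{-\sqrt{|\mu_{2n}|}x_n})$. That yields $\int_{\R}xw_{1n}'w_{2n}\,dx=\frac{x_n}{2}\int_{\R}w_{1n}'w_{2n}\,dx-\frac12\int_{\R}w_{1n}w_{2n}\,dx+o(x_ne^{-\sqrt{|\mu_{2n}|}x_n})$. Then \eqref{b.12} and \eqref{a15} cancel the order-$x_n$ terms, since $\frac{x_n}{2}\cdot\frac12-\frac12\cdot\frac{x_n}{2}=0$. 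You are right that the exponent in \eqref{a.29} should be $-2\sqrt{|\mu_{2n}|}x_n$. You are also right that an $O(x_n)$ error would suffice for Lemma \ref{lem5.1}, but that does not prove the statement as written.

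\textbf{The constants in \eqref{a.13}.} Your route is the paper's, but ``they should combine'' is precisely the content of the lemma. Three details decide the constants.
\begin{itemize}
\item The prefactor must be $1+O(p_n-2)$, available from \eqref{a.3} and \eqref{0.10}, not $1+o(1)$. The weighted integrals are of size $x_n^2$, so $1+o(1)$ only gives $o(x_n^2)$.
\item The orientation must be consistent. With $y=x-x_n$ the first integrand is $-\frac{1+O(p_n-2)}{16}e^{-2\sqrt{|\mu_{2n}|}x_n}(x_n+y)e^{-2(2-p_n)\sqrt{|\mu_{2n}|}y}(1+e^{2(p_n-1)\sqrt{|\mu_{2n}|}y})^{-2}$, concentrated on $y<0$. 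The factor $e^{(4-2p_n)\sqrt{|\mu_{2n}|}y}$, the profile $(1+e^{-2(p_n-1)\sqrt{|\mu_{2n}|}y})^{-2}$ and the moments $\int_0^{x_n/2}y^k\,dy$ that you wrote belong to $y=x_n-x$, where the weight is $x_n-y$. Mixing the two flips the sign of the $y$-moment term.
\item The $O(1)$ constant of the unweighted integral, namely the $-2$ in \eqref{a.16a}, must be kept, because it is multiplied by $x_n$.
\end{itemize}
Doing this, and using $\frac{(2-p_n)x_n^3}{48}=[1+o(1)]x_n$, all up to $o(x_ne^{-2\sqrt{|\mu_{2n}|}x_n})$:
\begin{itemize}
\item $x_n$ times \eqref{a.16a} is $-(\frac{x_n^2}{32}+\frac{x_n}{16})e^{-2\sqrt{|\mu_{2n}|}x_n}$;
\item the $y$-weighted part is $(\frac{x_n^2}{128}+\frac{x_n}{16})e^{-2\sqrt{|\mu_{2n}|}x_n}$;
\item hence the first piece is $-\frac{3x_n^2}{128}e^{-2\sqrt{|\mu_{2n}|}x_n}$;
\item the second piece is $-\frac1{16}[\frac{x_n^2}{8}+\frac{(2-p_n)x_n^3}{48}]e^{-2\sqrt{|\mu_{2n}|}x_n}=-(\frac{x_n^2}{128}+\frac{x_n}{16})e^{-2\sqrt{|\mu_{2n}|}x_n}$.
\end{itemize}
The sum of the two pieces is \eqref{a.13}.
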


\noindent \textbf{Proof.}  Since $\lim\limits_{p_n\nearrow2}(2-p_n)x_n^2=48$,
 \begin{align*}
 	&\int^{x_n/2}_{-x_n/2}\frac{x^2}{(1+e^{-2(p_n-1)\sqrt{|\mu_{2n}|}x})^{2}}dx
 	=\big[1+o(1)\big]\frac{x_n^3}{24}\ \ \ \text{as}\ \ p_n\nearrow2,
\end{align*}
and
\begin{align*}
 	&\int^{x_n/2}_{-3x_n/4}x^2\frac{1-e^{-2(p_n-1)\sqrt{|\mu_{2n}|}x}}{(1+e^{-2(p_n-1)\sqrt{|\mu_{2n}|}x})^{3}}dx
 	=\big[1+o(1)\big]\frac{x_n^3}{24}\ \ \ \text{as}\ \ p_n\nearrow2,
\end{align*}
we deduce from \eqref{a.13a}--\eqref{a.16a} that
 \begin{align*}
 	&\int_{x_n/2}^{3x_n/2}xw_{1n}w_{1n}'w_{2n}^{2p_n-2}dx\\
 =&-\frac{1+O(p_n-2)}{16}\int^{3x_n/2}_{x_n/2}\frac{xe^{-2\sqrt{|\mu_{2n}|}x}e^{2(p_n-1)
 \sqrt{|\mu_{2n}|}(x-x_n)}}{(1+e^{2(p_n-1)\sqrt{|\mu_{2n}|}(x-x_n)})^2}dx\\
 	=&-\frac{1+O(p_n-2)}{16}e^{-(2p_n-2)\sqrt{|\mu_{2n}|}x_n}  	 \int^{x_n/2}_{-x_n/2}\big(-x+x_n\big)\frac{1-(4-2p_n)\sqrt{|\mu_{2n}|}(-x+x_n)}{(1+e^{-2(p_n-1)\sqrt{|\mu_{2n}|}x})^{2}}dx\\
 	=&-\frac{3x_n^2}{128}e^{-2\sqrt{|\mu_{2n}|}x_n}
 	+o\big(x_ne^{-2\sqrt{|\mu_{2n}|}x_n}\big)\ \ \ \text{as}\ \ p_n\nearrow2,
 \end{align*}
 and
 \begin{align*}
 	&(p_n-1)\int^{x_n/2}_{-3x_n/4} xw_{1n}'w_{1n}^{2p_n-3}w_{2n}^2dx\\
 	=&(p_n-1)\int^{x_n/2}_{-3x_n/4} xw_{1n}'w^{2p_n-3}_{1n}w_{2n}^{2p_n-2}\big[1+(4-2p_n)\text{ln}w_{2n}\big]dx\\
 	=&\frac{1+O(p_n-2)}{16}e^{-(2p_n-2)\sqrt{|\mu_{2n}|}x_n}\\ 	 &\cdot\int^{x_n/2}_{-3x_n/4}x\frac{e^{-2(p_n-1)\sqrt{|\mu_{2n}|}x}-1}{(1+e^{-2(p_n-1)\sqrt{|\mu_{2n}|}x})^{3}}\big[1-(4-2p_n)\sqrt{|\mu_{2n}|}(x_n-x)\big]dx\\
 	=&\Big(-\frac{x_n^2}{128}-\frac{x_n}{16}\Big)e^{-2\sqrt{|\mu_{2n}|}x_n}+o\big(x_ne^{-2\sqrt{|\mu_{2n}|}x_n}\big)\ \ \ \text{as}\ \ p_n\nearrow2,
 \end{align*}
 which thus proves \eqref{a.13}.

 Moreover,  since
 \begin{align*}
 	\inte xw_{1n}'w_{2n}dx=&-\inte w_{1n}w_{2n}dx-\inte xw_{1n}w'_{2n}dx\\
 	=&-\inte w_{1n}w_{2n}dx+\inte (-x+x_n)w'_{1n}w_{2n}dx+o(x_ne^{-\sqrt{|\mu_{2n}|}x_n})
 \end{align*}
 as  $p_n\nearrow2$, we derive from \eqref{b.12} that
 \begin{equation*}\label{a.10}
 	\begin{split}&a_n\sqrt{1-b_n^2}(\mu_{1n}-\mu_{2n})\inte xw_{1n}'w_{2n}dx\nonumber\\
 		=&\frac{x_n}{2}a_n\sqrt{1-b_n^2}(\mu_{1n}-\mu_{2n})\inte w_{1n}'w_{2n}dx+\frac{1+o(1)}{16}x_ne^{-2\sqrt{|\mu_{2n}|}x_n}\\
 		=&\frac{1}{64}x_n^2e^{-\sqrt{|\mu_{2n}|}x_n}+o(x_ne^{-2\sqrt{|\mu_{2n}|}x_n})\ \ \ \text{as}\ \ p_n\nearrow2,\nonumber
 \end{split}\end{equation*}
 which thus yields that \eqref{a.29} holds true. This therefore proves Lemma \ref{lemA.2}.
 \qed


\begin{thebibliography}{99}


\bibitem{con1} T. Cazenave,  Semilinear Schr\"odinger Equations, Courant Lecture Notes in Mathematics  Vol. 10, Courant Institute of
Mathematical Science/AMS, New York (2003).



		
\bibitem{ii} R. L. Frank, D. Gontier and M. Lewin, {\em The nonlinear Schr\"{o}dinger equation for orthonormal functions II: application to Lieb-Thirring inequalities}, Comm. Math. Phys. {\bf 384} (2021), 1783--1828.
		

		
\bibitem{elli} D. Gilbarg and N. S. Trudinger, Elliptic Partial Differential Equations, 2nd ed., Belin: Springer, (1997).

		
\bibitem{i} D. Gontier, M. Lewin and F. Q. Nazar, \emph{The nonlinear Schr$\ddot{o}$dinger equation for orthonormal functions: existence of ground states}, Arch. Ration. Mech. Anal. {\bf240} (2021), 1203--1254.

\bibitem{G}
X. W. Guan, M. T. Batchelor, C. H. Lee, \emph{Fermi gases in one dimension: From Bethe Ansatz to experiments}, Rev. Mod. Phys. {\bf 85} (2013), 1633--1691.

\bibitem{glw}  Y. J. Guo, C. S. Lin and J. C. Wei, {\em Local uniqueness and refined spike profiles of ground states for two-dimensional attractive Bose-Einstein condensates}, SIAM J. Math. Anal. {\bf 49} (2017), 3671--3715.

\bibitem{GLY} Y. J. Guo, Y. Luo and W. Yang, {\em The nonexistence of vortices for rotating Bose-Einstein condenstates with attractive interactions},  Arch. Ration. Mech. Anal. {\bf238} (2020), 1231--1281.

\bibitem{GS} Y. J. Guo and R. Seiringer, {\em On the mass concentration for Bose-Einstein condensates with attractive interactions}, Lett. Math. Phys. {\bf104} (2014), 141--156.

		
%
%
%
%
%
%

\bibitem{K} P. Kevrekidis and D. Frantzeskakis, \emph{Solitons in coupled nonlinear Schr\"{o}dinger models: a survey of recent developments}, Rev. Phys. {\bf 1} (2016), 140--153.


\bibitem{uniq} M. K. Kwong,  \emph{Uniqueness of positive solutions of $\Delta u - u + u^p = 0$ in $\R^N$}, Arch. Ration. Mech. Anal. {\bf105} (1989), 243--266.

\bibitem{Lewin} M. Lewin,  \emph{Geometric methods for nonlinear many-body quantum systems}, J. Funct. Anal. {\bf 260} (2011), 3535--3595.



%
%

\bibitem{1981} E. H. Lieb, \emph{Variational principle for many-fermion systems}, Phys. Rev. Lett. {\bf 46} (1981), 457--459
		
\bibitem{analysis} E. H. Lieb and M. Loss,   Analysis, Graduate Studies in Mathematics Vol. 14, 2nd ed, American Mathematical Society, Providence, RI, 2001.
%
%
%
%
\bibitem{concen} P. L. Lions,  \emph{The concentration-compactness principle in the calculus of variations. The locally compact case. I}, Ann. Inst. Henri Poincar\'{e}. Anal. Non Lin\'{e}aire {\bf1} (1984), 109--145.

%
\bibitem{concen2} P. L. Lions,  \emph{The concentration-compactness principle in the calculus of variations. The locally compact case. II}, Ann. Inst H. Poincar\'{e}. Anal. Non Lineaire  {\bf1} (1984), 223--283.

\bibitem{Wei} J. C. Wei and  M. Winter, {\em Existence, classification and stability analysis of multiple-peaked solutions for the Gierer-Meinhardt system in  $\mathbb{R}^1$}, Methods Appl. Anal. {\bf 14} (2007),   119--163.

%
%
%
%
%
%
%
%
%


\end{thebibliography}
\end{document}